\title[Limits of Conformal Immersions]{Limits of conformal immersions under a bound on a fractional normal curvature quantity}
\author{Armin Schikorra}
\address[Armin Schikorra]{Department of Mathematics,
University of Pittsburgh,
301 Thackeray Hall,
Pittsburgh, PA 15260, USA}
\email{armin@pitt.edu}
\def\eps{\varepsilon}
\def\B{{\mathbb{B}}}
\def\N{{\mathbb N}}
\def\S{{\mathbb S}}
\newtheorem{theorem}{Theorem}
\newtheorem{lemma}[theorem]{Lemma}
\newtheorem{proposition}[theorem]{Proposition}
\newtheorem{remark}[theorem]{Remark}
\def\supp{{\rm supp\,}}
\newcommand{\R}{\mathbb{R}}
\newcommand{\Z}{\mathbb{Z}}
\newcommand{\brac}[1]{\left (#1 \right )}
\newcommand{\Sw}{\mathcal{S}}
\newcommand{\barint}{
\rule[.036in]{.12in}{.009in}\kern-.16in \displaystyle\int }
\newcommand{\barcal}{\mbox{$ \rule[.036in]{.11in}{.007in}\kern-.128in\int $}}
\def\mvint_#1{\mathchoice
          {\mathop{\vrule width 6pt height 3 pt depth -2.5pt
                  \kern -8pt \intop}\nolimits_{\kern -3pt #1}}%
          {\mathop{\vrule width 5pt height 3 pt depth -2.6pt
                  \kern -6pt \intop}\nolimits_{#1}}%
          {\mathop{\vrule width 5pt height 3 pt depth -2.6pt
                  \kern -6pt \intop}\nolimits_{#1}}%
          {\mathop{\vrule width 5pt height 3 pt depth -2.6pt
                  \kern -6pt \intop}\nolimits_{#1}}}
\numberwithin{theorem}{section} \numberwithin{equation}{section}
\newcommand{\lap}{\Delta }
\newcommand{\aleq}{\precsim}
\newcommand{\ageq}{\succsim}
\newcommand{\aeq}{\approx}
\newcommand{\Rz}{\mathcal{R}}
\newcommand{\laps}[1]{(-\lap)^{\frac{#1}{2}}}
\newcommand{\lapv}{(-\lap)^{\frac{1}{4}}}
\newcommand{\lapms}[1]{I^{#1}}
\renewcommand{\div}{\operatorname{div}}
\let\latexchi\chi
\renewcommand\chi{\@ifnextchar_\sub@chi\latexchi}
\newcommand{\sub@chi}[2]{
  \@ifnextchar^{\subsup@chi{#2}}{\latexchi^{}_{#2}}%
}
\newcommand{\subsup@chi}[3]{
  \latexchi_{#1}^{#3}%
}
\newcommand{\dv}{\operatorname{div}}
\begin{document}

\begin{abstract}
We consider limits of weakly converging $W^{1,2}$-maps $\Phi_k$ from a ball $B \subset \R^2$ into $\R^3$ which are conformal immersions. Under the assumption that a normal curvature term is small, namely if for the normal map $u$ we have for some $s \in (\frac{1}{2},1)$
\[
\int\limits_{B} \int\limits_{B} \left |
\frac{u_k(x) \wedge u_k(y)}{|x-y|^{s}} 
\right|^{\frac{2}{s}}\, \frac{dx\, dy}{|x-y|^{2}} < \eps
\] 
then we show that we can either pass to the limit and obtain an almost everywhere immersion $\Phi$ or $\Phi$ collapses and is constant. This is in the spirit of the results by T. Toro, and S. M\"uller and V.
Sverak, and F. H\'elein, who obtained similar statements under the stronger assumptions that the second fundamental form is bounded (but also stronger result: a locally bi-Lipschitz parametrization).

The fractional normal curvature assumption is vaguely reminiscent of curvature energies such as the scaling-invariant limits of tangent-point energies for surfaces as considered by Strzelecki, von der Mosel et al. and we hope that eventually the analysis in this work can be used to define weak immersions with these kind of energy bounds.
\end{abstract}

\maketitle
\tableofcontents
\sloppy

\section{Introduction}
Let $\Phi: B(0,1) \to \R^3$ be a conformal parametrization of a patch of a surface $\Sigma \subset \Phi(B(0,1))$. The Willmore energy of this patch is given as
\[
\mathcal{W}(\Phi;B(0,1)) = \frac{1}{4} \int_{B(0,1)} |\nabla u|^2 +C(\Sigma).
\]
where $u$ is the unit normal to $\Sigma$ at $\Phi(x)$.
\[
u(x) = \frac{\partial_1 \Phi }{|\partial_1 \Phi|}\wedge \frac{ \partial_2 \Phi}{ |\partial_2 \Phi|}.
\]
The following is a fundamental theorem by M\"uller-Sverak \cite{MS95} after earlier works by Toro \cite{T94,T95}, see also \cite[Theorem 5.1.1]{Helein-2002}. Sharp constants $\eps_0$ were obtained in \cite{KL12}, \cite{LiLuoTang13}. We also refer to surveys \cite{KS12} and \cite{R15}.  

\begin{theorem}\label{th:ms}
Assume that $\Phi_k \in C^\infty(\overline{B(0,1)},\R^3)$ is a sequence of conformal immersions, i.e. for each $k \in \N$
\[
\partial_\alpha \Phi_k = e^{\lambda_k} e_{\alpha;k} \quad \alpha = 1,2
\]
for some orthonormal system $e_{1;k},e_{2;k} \in C^\infty(\overline{B(0,1)},\R^3)$.

If $\Phi_k$ converges weakly to $\Phi$ in $W^{1,2}(B(0,1),\R^3)$ and if
\[
\sup_{k \in \N} \mathcal{W}(\Phi_k;B(0,1)) < \eps_0,
\]
then $\Phi$ is either a constant map or $\Phi$ is a bilipschitz conformal immersion. 

More precisely, there are $\lambda \in L^\infty \cap W^{1,2}_{loc}(B(0,1))$ and $(e_1,e_2)\in W^{s,\frac{2}{s}}_{loc}(B(0,1),\S^2)$ an orthonormal system such that
\[
\partial_\alpha \Phi = e^{\lambda} e_{\alpha} \quad \alpha = 1,2.
\]
\end{theorem}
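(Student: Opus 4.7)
The plan is to exploit the Jacobian structure of the equation satisfied by the conformal factor $\lambda_k$, combined with a Coulomb gauge reduction of the moving frame, in order to obtain a uniform local $L^\infty$ oscillation bound via Wente's inequality; the dichotomy in the statement then reflects the behavior of the resulting additive constant.

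First I would derive a scalar equation for $\lambda_k$. Writing the commutation $\partial_1\partial_2\Phi_k=\partial_2\partial_1\Phi_k$ and expanding in the adapted frame $(e_{1;k},e_{2;k},u_k)$ with $u_k:=e_{1;k}\wedge e_{2;k}$ yields (up to sign) the Levi-Civita-type identity $\langle e_{1;k},\nabla e_{2;k}\rangle=\nabla^\perp\lambda_k$. Taking its two-dimensional $\curl$ and expressing everything in Euclidean components gives
\[
\Delta\lambda_k=\sum_{i=1}^{3}\{e_{1;k}^i,e_{2;k}^i\},\qquad \{f,g\}:=\partial_1 f\,\partial_2 g-\partial_2 f\,\partial_1 g.
\]
This has Jacobian structure, but its right-hand side is quadratic in a frame whose $W^{1,2}$ norm is not controlled by the Willmore bound alone. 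I would therefore pass to a Coulomb frame $(\tilde e_{1;k},\tilde e_{2;k})$ for $u_k$ provided by H\'elein's small-energy gauge lemma, which satisfies $\div\langle\tilde e_{1;k},\nabla\tilde e_{2;k}\rangle=0$ and $\|\nabla\tilde e_{\alpha;k}\|_{L^2}\le C\|\nabla u_k\|_{L^2}\le C\sqrt{\eps_0}$. Writing $e_{\alpha;k}=R(\theta_k)\tilde e_{\alpha;k}$, the gauge transformation of the connection form shows that $\lambda_k-\tilde h_k$ is harmonic, where $\tilde h_k$ is the Dirichlet solution on $B$ of $\Delta\tilde h_k=\sum_i\{\tilde e_{1;k}^i,\tilde e_{2;k}^i\}$.

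Wente's inequality applied to $\tilde h_k$ then gives $\|\tilde h_k\|_{L^\infty(B)}+\|\nabla\tilde h_k\|_{L^2(B)}\le C\eps_0$. The remainder $\phi_k:=\lambda_k-\tilde h_k$ is harmonic, and the uniform $W^{1,2}$ bound on $\Phi_k$ together with the $L^\infty$ bound on $\tilde h_k$ gives $\int_B e^{2\phi_k}\le M'$; since $e^{2\phi_k}$ is subharmonic, its mean value bounds $\phi_k$ from above by some $C(B',B)$ on every $B'\Subset B$. Standard interior estimates for harmonic functions then produce
\[
\|\lambda_k-c_k\|_{L^\infty(B')}+\|\nabla\lambda_k\|_{L^2(B')}\le C(B',B),\qquad c_k:=\phi_k(0).
\]
If $c_k\to-\infty$ along a subsequence then $|\nabla\Phi_k|=e^{\lambda_k}\to 0$ uniformly on compacta and the weak limit $\Phi$ must be constant. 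Otherwise $c_k$ remains bounded, $e^{\lambda_k}$ is locally pinched between two positive constants, Rellich gives strong $L^2_{loc}$ convergence of the frame, and passing to the limit in $\partial_\alpha\Phi_k=e^{\lambda_k}e_{\alpha;k}$ yields the claimed representation as well as the bilipschitz regularity. The remaining $W^{s,2/s}_{loc}$ regularity of the limiting frame follows by bootstrapping the Coulomb gauge PDEs once $\lambda\in L^\infty_{loc}$ is in hand.

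The main obstacle is producing the scalar Jacobian-type equation for $\lambda_k$ with a right-hand side controlled purely by $\sqrt{\eps_0}$: this decoupling between the $W^{1,2}$ norm of the frame and the unknown conformal factor is exactly what the Coulomb gauge accomplishes. After that, the machinery (Wente plus subharmonic mean-value plus Rellich) is essentially standard, and the dichotomy itself is unavoidable since nothing in the hypotheses prevents the area $\int_B e^{2\lambda_k}$ from collapsing to zero.
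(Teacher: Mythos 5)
Theorem~\ref{th:ms} is cited in the paper from M\"uller--Sverak and H\'elein without an in-text proof; your proposal essentially reproduces the standard H\'elein/M\"uller--Sverak Coulomb-gauge argument, and it also parallels the structure the paper uses for the fractional generalization in Section~\ref{s:prooffracms}: derive $\nabla^\perp\lambda=\langle e_1,\nabla e_2\rangle$ (Lemma~\ref{la:conflambda1}), pass to a Coulomb frame so the Jacobian right-hand side has small $W^{1,2}$-data, split $\lambda$ into a Wente part with zero boundary data plus a harmonic remainder, and read off the dichotomy from the harmonic part.

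There is one genuine gap in the harmonic-part step. You assert, before distinguishing cases, a \emph{uniform} oscillation bound
$\|\lambda_k-c_k\|_{L^\infty(B')}+\|\nabla\lambda_k\|_{L^2(B')}\le C(B',B)$ with $c_k=\phi_k(0)$,
and then use it in the collapsing case to conclude $e^{\lambda_k}\to 0$. This bound is false when $c_k\to-\infty$. What the subharmonicity of $e^{2\phi_k}$ together with $\int_B e^{2\phi_k}\le M'$ gives is only the one-sided bound $\sup_{B''}\phi_k\le M$. Applying Harnack to the nonnegative harmonic function $\psi_k:=M-\phi_k$ on $B'\Subset B''$ yields
$\sup_{B'}\phi_k\le M-C_H^{-1}\,(M-c_k)$ and $\inf_{B'}\phi_k\ge M-C_H\,(M-c_k)$,
so $\osc_{B'}\phi_k\le (C_H-C_H^{-1})(M-c_k)$, which blows up precisely when $c_k\to-\infty$ (take for instance $\phi_k(x)=k(x_1-1)$ on $B(0,1)$: then $c_k=-k\to-\infty$, $\int_B e^{2\phi_k}$ stays bounded, yet $\osc_{B'}\phi_k=2kr'\to\infty$). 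The correct route to the dichotomy is the one-sided Harnack bound above (equivalently the paper's Lemma~\ref{la:harmonic1}): if $c_k\to-\infty$, then $\sup_{B'}\phi_k\to-\infty$ directly, hence $\sup_{B'}\lambda_k\to-\infty$ and $\nabla\Phi_k\to 0$ locally uniformly; if $c_k$ stays bounded, then and only then does your oscillation bound hold, giving the uniform $L^\infty_{\mathrm{loc}}\cap W^{1,2}_{\mathrm{loc}}$ control on $\lambda_k$ needed to pass to the limit. So your conclusions are right, but the uniform oscillation estimate should be replaced by the one-sided Harnack bound, and the two-sided estimate asserted only in the non-collapsing case.
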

This theorem has had numerous applications, in particular in the theory of weak Willmore surfaces, to define and analyze weak Willmore immersions, see the celebrated \cite{R08}.

On the other hand, in recent works \cite{BRS16,BRS19} there has been a breakthrough in the regularity theory of critical points of knot energies, namely M\"obius- and more generally scaling-invariant O'Hara energies, which we denote here by $\mathcal{O}^{\alpha,\frac{4}{\alpha}}$. These are self-repulsive energies on one-dimensional closed curves $\gamma: \S^1 \to \R^3$. The main idea in \cite{BRS16,BRS19} is that one can reduce the regularity analysis to the regularity analysis of fractional harmonic maps. Namely, by choosing the right parametrization (for curves: the constant-speed parametrization) we show that so parametrized critical knots $\gamma : \S^1 \to \R^3$ of the energy
\[
\mathcal{O}^{\alpha,\frac{4}{\alpha}}(\gamma)
\]
induce a critical map for 
\begin{equation}\label{eq:reformohara}
\mathcal{E}^{\alpha,\frac{4}{\alpha}}(v): \quad \mbox{subject to } v: \S^1 \to \S^2.
\end{equation}
Namely, if $\gamma: \S^1 \to \R^3$ is critical with respect to the O'Hara energy, then $v := \frac{\gamma'}{|\gamma'|}: \S^1 \to \S^2$ is a harmonic map with respect to the variational problem \eqref{eq:reformohara}. Moreover the energy $\mathcal{E}^{\alpha,\frac{4}{\alpha}}$ is essentially comparable to the $W^{\alpha,\frac{1}{\alpha}}$-Sobolev norm, which makes the regularity theory for critical points of \eqref{eq:reformohara} attainable from the regularity of degenerate harmonic maps, \cite{S15} -- a theory initiated for 1/2-harmonic maps by Da Lio and Rivi\`{e}re \cite{DLR11a,DLR11b}.

There are higher-dimensional analogues of surface energies, e.g. tangent-point and Menger-curvature energies have been extended \cite{StrzeleckivdM-JGA2013,KolasinskiStrzleeckivdMGAFA,StrzeleckivdMAdvMath2011}, and also analogues of the O'Hara energies exist \cite{OS18}. Nothing is known about the critical or even minimizing surfaces of these energies in the scale-invariant case.

In order to even have a glimpse of a chance of generalizing the arguments in \cite{BRS16,BRS19} to surfaces, the choice of parametrization (which in one-dimension is elementary) becomes a first roadblock.

Since in the case of Willmore surfaces the conformal parametrization has been the way to go, one might ask if some sort of conformal parametrization might also be possible for the fractional surface energies, i.e. if a version of Theorem~\ref{th:ms} holds in this case. This is by no means trivial, since the condition $\mathcal{W}(\Phi) < \eps_0$ is a condition of Sobolev-differential order $W^{2,2}$ of the surface. A brief and superficial analysis of the energies in \cite{StrzeleckivdM-JGA2013,KolasinskiStrzleeckivdMGAFA,StrzeleckivdMAdvMath2011} exhibits however that these are of order $W^{s,\frac{2}{s}}$ of the surface, where $s \in (1/2,1)$. As of now we are not able to replace $\mathcal{W}(\Phi,B(0,1))$ in Theorem~\ref{th:ms} by a condition on the tangent-point or Menger curvature of surfaces. However, our main result in Theorem~\ref{th:fracms} below is that we can lower the differential order for an energy that at least formally is reminiscent of these energies. 

The $W^{1,2}$-case is based on the second fundamental form $|A| = |\nabla u| = |u \wedge \nabla u|$ and the smallness condition in Theorem~\ref{th:ms} is with respect to the Willmore energy,
\[
\mathcal{W}_{1,2}(\Phi,B(0,1)) = \|A\|_{L^2(B(0,1)}^2 = \|u \wedge \nabla u\|_{L^2(B(0,1))}^2.
\]
Instead, for $s \in (\frac{1}{2},1)$ we consider
\[
\mathcal{W}_{s,p}(\Phi,B(0,1)) := \int_{B(0,1)} \int_{B(0,1)} \frac{|u(x) \wedge u(y)|^p}{|x-y|^{n+sp}}\, dx\, dy
\]
The notion $\mathcal{W}_{s,p}$ is somewhat justified, since by the same argument as, e.g., in \cite{BBM01} we have
\begin{lemma}
For $s \in (0,1)$, we have
\[
\mathcal{W}_{s,\frac{2}{s}}(\Phi,B(0,1)) \aleq \mathcal{W}_{1,2}(\Phi,B(0,1)).
\]
Moreover,
\[
\lim_{s \to 1} (1-s) \mathcal{W}_{s,2}(\Phi,B(0,1)) = \mathcal{W}_{1,2}(\Phi,B(0,1)),
\]
\end{lemma}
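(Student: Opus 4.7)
The plan is to reduce both claims to the Bourgain-Brezis-Mironescu (BBM) calculus via an algebraic identity that is available because $|u| \equiv 1$. Setting $c = u(x) \cdot u(y)$, one has $|u(x)\wedge u(y)|^2 = 1 - c^2$ and $|u(x) - u(y)|^2 = 2(1-c)$, and combining these yields
\[
|u(x)\wedge u(y)|^2 = |u(x) - u(y)|^2 - \tfrac{1}{4}|u(x)-u(y)|^4,
\]
so in particular $|u(x)\wedge u(y)| \leq |u(x)-u(y)|$ pointwise. Likewise $u\cdot \nabla u = \tfrac{1}{2}\nabla|u|^2 = 0$ gives $|u\wedge \nabla u| = |\nabla u|$, so $\mathcal{W}_{1,2}(\Phi, B) = \|\nabla u\|_{L^2}^2$.

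For the first inequality, the pointwise bound $|u(x)\wedge u(y)|^{2/s} \leq |u(x) - u(y)|^{2/s}$ reduces the claim to
\[
\int_B\int_B \frac{|u(x)-u(y)|^{2/s}}{|x-y|^{n+2}}\,dx\,dy = [u]_{W^{s,2/s}(B)}^{2/s} \aleq \|\nabla u\|_{L^2}^2.
\]
In dimension $n = 2$ this is exactly the critical Gagliardo-Nirenberg interpolation
\[
[f]_{W^{s,2/s}}^{2/s} \aleq \|\nabla f\|_{L^2}^{2} \|f\|_{L^\infty}^{2(1-s)/s},
\]
applied with $f = u$ and $\|u\|_\infty = 1$. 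The scale-invariance check $[f_\lambda]_{W^{s,2/s}} = [f]_{W^{s,2/s}}$ and $\|\nabla f_\lambda\|_{L^2} = \|\nabla f\|_{L^2}$ valid precisely in $n=2$ explains why the exponents match, and the inequality follows from the dyadic/Littlewood-Paley techniques underlying \cite{BBM01}.

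For the limit, I would multiply the identity by $(1-s)|x-y|^{-n-2s}$ and integrate:
\[
(1-s)\mathcal{W}_{s,2}(\Phi, B) = (1-s)\int_B\int_B \frac{|u(x)-u(y)|^2}{|x-y|^{n+2s}}\,dx\,dy - \frac{1-s}{4}\int_B\int_B \frac{|u(x)-u(y)|^4}{|x-y|^{n+2s}}\,dx\,dy.
\]
The first summand tends to $K_n\mathcal{W}_{1,2}(\Phi, B)$ by BBM \cite{BBM01}. For the second (error) term I claim that it tends to zero: for smooth $u$ one has $|u(x)-u(y)|^4 \leq \|\nabla u\|_\infty^4 |x-y|^4$, so $\int\int |x-y|^{4-n-2s}\,dx\,dy$ is uniformly bounded for $s \in (\tfrac12, 1)$, and the factor $(1-s)$ forces vanishing. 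A density argument using $\bigl|\,|a|^4 - |b|^4\bigr| \leq C|a-b|$ for $|a|,|b|\leq 2$ combined with BBM's uniform bound $(1-s)\int\int|v(x)-v(y)|^2/|x-y|^{n+2s} \leq C\|\nabla v\|_{L^2}^2$ extends the vanishing to $u \in W^{1,2}\cap L^\infty$.

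The main technicality is this density step, since the smooth pointwise bound invokes $\|\nabla u\|_\infty$ which need not be finite. The standard fix is to split the error integral into $\{|x-y|\leq \delta\}$ and $\{|x-y|>\delta\}$, dominate the near-diagonal piece by $\delta^2$ times the (BBM-bounded) $|u(x)-u(y)|^2$-integral, and dominate the far piece by $(1-s)\delta^{-n-2s}\|u\|_\infty^4|B|^2$; choosing $\delta = \delta(s) \to 0$ slowly as $s \uparrow 1$ then closes the argument.
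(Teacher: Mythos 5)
Your proposal is correct and follows essentially the route the paper intends (the paper gives no proof, only the citation to \cite{BBM01}): the Lagrange identity reduces $|u(x)\wedge u(y)|$ to $|u(x)-u(y)|$, the first bound is the fractional Gagliardo--Nirenberg embedding $W^{1,2}\cap L^\infty\hookrightarrow W^{s,2/s}$ in $n=2$, and the limit is BBM plus a vanishing quartic error. Two minor remarks: since $u$ is smooth in the paper's setting the density digression is unneeded (and as written its near-diagonal bound should carry the modulus of continuity $\omega(\delta)^2$, or a Cauchy--Schwarz against the BBM-bounded quadratic term, rather than a literal $\delta^2$), and the BBM limit produces a dimensional constant $K_n$ in front of $\mathcal{W}_{1,2}$ that the paper's literal statement suppresses.
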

Our main result is the following extension of Theorem~\ref{th:ms}.
\begin{theorem}\label{th:fracms}
For any $s \in (\frac{1}{2},1]$ there exists $\eps_s$ such that the following holds.
For some ball $B \subset \R^2$ assume that $\Phi_k \in C^\infty(\overline{B},\R^3)$ is a sequence of conformal immersions, i.e. for each $k \in \N$
\[
\partial_\alpha \Phi_k = e^{\lambda_k} e_{\alpha;k} \quad \alpha = 1,2
\]
for some orthonormal system $e_{1;k},e_{2;k} \in C^\infty(\overline{B},\R^3)$.

If $\Phi_k$ converges weakly to $\Phi$ in $W^{1,2}(B,\R^3)$ and if
\[
\sup_{k \in \N} \mathcal{W}_{s,\frac{2}{s}}(\Phi_k;B) < \eps_s
\]
then $\Phi$ is either a constant map or $\Phi$ is a conformal immersion almost everywhere. More precisely there are $\lambda \in W^{1,2}_{loc}(B)$ and $(e_1,e_2)\in W^{s,\frac{2}{s}}_{loc}(B,\S^2)$ an orthonormal system such that
\[
\partial_\alpha \Phi = e^{\lambda} e_{\alpha} \quad \alpha = 1,2 \quad \mbox{a.e. in $B$}
\]
\end{theorem}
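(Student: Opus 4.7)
The strategy is to transplant the classical Müller--Sverak argument for Theorem~\ref{th:ms} into the fractional setting, replacing the $W^{1,2}$-Wente-type compensated compactness with fractional analogues. As a preparatory step, I exploit the fact that $|u_k|\equiv 1$: writing $|u_k(x)\wedge u_k(y)|^2=(1-u_k(x)\cdot u_k(y))(1+u_k(x)\cdot u_k(y))$ versus $|u_k(x)-u_k(y)|^2=2(1-u_k(x)\cdot u_k(y))$, we have $|u_k(x)\wedge u_k(y)|\aeq|u_k(x)-u_k(y)|$ whenever $u_k(x)$ and $u_k(y)$ are not close to antipodal. Since $W^{s,\frac{2}{s}}$ is the critical Sobolev space in dimension $2$, so that $W^{s,\frac{2}{s}}\hookrightarrow BMO$, a John--Nirenberg/Campanato argument allows us to pass from the wedge quantity to the Gagliardo seminorm of $u_k$ on sufficiently small sub-balls $B'\Subset B$; hence the smallness $\mathcal{W}_{s,\frac{2}{s}}(\Phi_k;B)<\eps_s$ yields a uniform smallness of $[u_k]_{W^{s,\frac{2}{s}}(B')}$.

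Next, I use the frame compatibility condition. Writing $\partial_2(e^{\lambda_k}e_{1;k})=\partial_1(e^{\lambda_k}e_{2;k})$ and projecting onto $e_{1;k}$ and $e_{2;k}$ gives the classical moving-frame identities
\[
\partial_2\lambda_k=e_{1;k}\cdot\partial_1 e_{2;k},\qquad \partial_1\lambda_k=-e_{2;k}\cdot\partial_2 e_{1;k},
\]
so that $-\Delta\lambda_k$ is a Jacobian of the frame components. In the fractional regime, one rewrites this identity as a nonlocal div--curl structure involving $(-\Delta)^{s/2}e_{\alpha;k}$, and then applies a fractional Wente / three-term commutator estimate (in the spirit of Da Lio--Rivière and the compensation results behind \cite{BRS16,BRS19}) to control $\lambda_k-\fint_{B'}\lambda_k$ in $W^{1,2}_{\loc}(B')$, and even in $BMO_{\loc}$, by $[u_k]_{W^{s,\frac{2}{s}}(B')}^{2/s}$. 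The smallness of this seminorm is what feeds the nonlinear fixed-point/bootstrap yielding the frame bounds, analogous to how the Coulomb gauge + Wente feeds the classical proof.

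With $\lambda_k-\fint_{B'}\lambda_k$ uniformly bounded in $W^{1,2}_{\loc}\cap BMO_{\loc}$, and with $u_k$ precompact in $L^p_{\loc}$ for every $p<\infty$ by the compact embedding $W^{s,\frac{2}{s}}\hookrightarrow\hookrightarrow L^p_{\loc}$, one can extract subsequences so that $u_k\to u$ a.e., $e_{\alpha;k}\to e_\alpha$ a.e. (choosing the frame, e.g., by Coulomb gauge to guarantee compactness), and $\lambda_k-\fint_{B'}\lambda_k\to\tilde\lambda$ weakly in $W^{1,2}_{\loc}$ and a.e. The dichotomy then concerns only the (unbounded) constants $c_k:=\fint_{B'}\lambda_k$: if $c_k\to-\infty$ along a subsequence, then $e^{\lambda_k}\to 0$ in $L^p_{\loc}$ and passing to the limit in $\partial_\alpha\Phi_k=e^{\lambda_k}e_{\alpha;k}$ forces $\nabla\Phi\equiv 0$, so that $\Phi$ is constant; otherwise $c_k$ is bounded along a subsequence, $\lambda=\tilde\lambda+\lim c_k\in W^{1,2}_{\loc}$ is finite a.e., and the identity $\partial_\alpha\Phi=e^\lambda e_\alpha$ holds a.e.\ in $B$, which is the desired a.e.\ conformal immersion.

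The main technical obstacle is the second step: producing the fractional Wente estimate that controls $\lambda_k$ in $W^{1,2}_{\loc}$ by a small constant times $[u_k]_{W^{s,\frac{2}{s}}}^{2/s}$. Unlike the $s=1$ case, where the div--curl structure gives the $L^\infty$-bound responsible for bi-Lipschitz regularity, the fractional compensation is only expected to yield $W^{1,2}_{\loc}$ (or $BMO_{\loc}$) estimates because $s<1$ forces one degree of integrability below the endpoint; this accounts for the weaker ``a.e.\ immersion'' conclusion of Theorem~\ref{th:fracms} compared to the bi-Lipschitz one of Theorem~\ref{th:ms}. A secondary difficulty is the choice of frame: to make the fractional div--curl argument work one must fix a gauge (Coulomb-type, constructed via a fractional Hodge decomposition on $B'$) so that $e_{\alpha;k}$ inherit good $W^{s,\frac{2}{s}}$ bounds from $u_k$ and not just from the ambient trivialization of the frame bundle.
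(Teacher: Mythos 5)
Your overall architecture matches the paper's: use the fractional Wente estimate (here Proposition~\ref{pr:wente1}) together with a Coulomb-type gauge (Theorem~\ref{th:frlifting}) to bound $\nabla\lambda_k$ locally in $L^2$, then split a dichotomy between a collapsing and a non-collapsing case and pass to the limit in $\partial_\alpha\Phi_k=e^{\lambda_k}e_{\alpha;k}$. However, there is a genuine gap at the very first step, and a couple of smaller places where your sketch glosses over essential ingredients.

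\textbf{The gap.} You propose to pass from the wedge quantity $\int\int|u_k(x)\wedge u_k(y)|^{2/s}|x-y|^{-4}$ to the Gagliardo seminorm $[u_k]_{W^{s,2/s}}$ via the embedding $W^{s,2/s}\hookrightarrow BMO$ and a John--Nirenberg argument. This is circular: the BMO (or VMO) bound presupposes a quantitative $W^{s,2/s}$ bound on $u_k$, which is exactly what you are trying to produce from the wedge quantity. The point is that $|u(x)\wedge u(y)|\aeq|u(x)-u(y)|$ only away from antipodal configurations, and the smallness of the wedge integral does not immediately exclude large antipodal oscillation — the oscillation could in principle produce a large Gagliardo seminorm while the wedge integral stays small. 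The paper's Proposition~\ref{pr:uwusmalluwsmall} resolves this by a continuity/shooting argument: one derives a quadratic inequality $f(r)\leq C f(r)^2+g(r)$ for $f(r)=[u]_{W^{s,2/s}(B(0,r))}$, and uses absolute continuity of the integral plus $f(0^+)=0$ to select the small root. An unconditional BMO argument does not provide this root-selection mechanism, so as written your step~1 does not close.

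\textbf{Secondary points.} Your dichotomy is on $c_k=\fint\lambda_k$, whereas the paper's dichotomy is on $\int_{B}(\lambda_k^h)_-$, the negative part of the harmonic extension of the boundary data of $\lambda_k$. These are morally similar but not identical, and the paper's version interfaces cleanly with the harmonic-function estimate (Lemma~\ref{la:harmonic1}) and with Lemma~\ref{la:lambdahestK}, which produces the exponential decay $\|\nabla\Phi_k\|_{L^2(B(0,r))}\aleq\exp(-C_1\int(\lambda_k^h)_-+\dots)$ driving the collapsing case. If you insist on $c_k$, you still need to rule out $c_k\to+\infty$ via the uniform $W^{1,2}$ bound and you need pointwise control of the oscillation of $\lambda_k$, so you cannot avoid the harmonic part. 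Also, to pass to the limit in the equation you need $e^{\lambda_k}\rightharpoonup e^{\lambda}$ weakly in $L^2$; this requires uniform $L^2$ integrability of $e^{\lambda_{0;k}}$, which the paper gets from Moser--Trudinger after using the smallness (from the Wente estimate and the gauge) to ensure $\|\nabla\lambda_{0;k}\|_{L^2}\leq 1$. Your sketch omits this step; without it, a.e.\ convergence of $\lambda_k$ alone does not give the desired weak $L^2$ convergence of the conformal factor. Finally, your assertion of a local $BMO$ bound on $\lambda_k$ is not needed for the conclusion (a.e.\ immersion) and, as the paper itself points out, a genuine $L^\infty_{\loc}$ control of the conformal factor is not expected under the stated hypothesis for $s<1$.
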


While Theorem~\ref{th:fracms} seems to be the first result in this direction for condition on the surface of differential order below $2$, there is one major drawback here: the quantity $\mathcal{W}_{s,\frac{2}{s}}$ is not very geometric, and it is not clear to the author which reasonable parametrization invariant surface energy reduces to $\mathcal{W}_{s,\frac{2}{s}}$ under the assumption of conformal parametrization.

We also have a few technical limitations (which can be remedied however):
\begin{itemize} 
\item The case $s=\frac{1}{2}$ is ruled out in Theorem~\ref{th:fracms}. But one observes that the analysis presented below can be extended to a version for $s=\frac{1}{2}$, however the formulation is quite technical: If one assumes that 
\[
\sup_{k \in \N} \|\lapv u_k\|_{L^{4}(B(0,1))} < \eps_{\frac{1}{2}}
\]
for maps $u_k: \R^2 \to \R^{3}$ which on $B(0,1)$ coincide with the unit normal $u_k$ of $\Phi(B(0,1))$. We did not find a representation of this fact in terms of a functional more reminiscent of $\mathcal{W}_{s,\frac{2}{s}}$.

\item One also notices that we do not obtain an analogue of the $L^\infty_{loc}$-control of the conformal factor. From the point of view of harmonic analysis it seems unlikely that such a uniform boundedness holds under the assumptions presented. 
It is however a relatively easy consequence of our argument that if in addition for any extension $u_k$ of the unit normal $u_k: B(0,1) \to \S^{2}$ we have
\begin{equation}\label{eq:ulorentzbd}
\sup_{k \in \N} \|\laps{s} u_k\|_{L^{(\frac{n}{s},2)}(B(0,1))} < \infty
\end{equation}
then we have the a local bound on the conformal factor. Here $L^{(\frac{n}{s},2)}$ denotes the Lorentz space, and by Sobolev inequality it is still true that $\mathcal{W}_{1,2}(\Phi_k;B(0,1)) < \infty$ then \eqref{eq:ulorentzbd} holds. In this sense it is possible to obtain a real extension of Theorem~\ref{th:ms} -- however the geometric meaning is unclear.
\end{itemize}
Let us remark on the proof of Theorem~\ref{th:fracms}. In the classical case $s=1$, the proof of Theorem~\ref{th:ms} can be based on the following lifting property, see \cite[Lemma 5.1.5]{Helein-2002}.

\begin{theorem}\label{th:lifting}
There exists $\eps_1 > 0$ so that the following holds true.

Let $B \subset \R^2$ be any ball. Assume that $(u,e_1,e_2) \in C^\infty(\overline{B},\S^2)$ form pointwise an orthonormal basis of $\R^3$.

If $u$ satisfies
\begin{equation}\label{eq:ucondold}
 \int_{B} |\nabla u|^2 \leq \eps_1
\end{equation}
then there exist $\tilde{e}_1$ and $\tilde{e}_2 \in \dot{H}^{1,2}(\R^2,\S^2)$ such that
\begin{equation}
 (\tilde{e}_1, \tilde{e}_2, u ) \quad \mbox{form an orthonormal basis of $\R^3$ a.e.},
\end{equation}
such that 
\[
 \dv(\langle \tilde{e}_1, \nabla \tilde{e}_2 \rangle_{\R^3}) = 0,
\]
and such that
\begin{equation}
 \|\langle \tilde{e}_1,\nabla \tilde{e}_2 \rangle \|_{L^2(B)} \aleq \|\nabla u\|_{L^2(B)}^2.
\end{equation}
\end{theorem}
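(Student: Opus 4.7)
The plan is to follow H\'elein's moving-frame construction: first pick any frame $(e_1, e_2)$ on $\overline{B}$ orthogonal to $u$, then rotate it into a Coulomb gauge by solving a scalar Neumann problem, and finally exploit a null-form cancellation in the curl of the connection form to obtain the quadratic bound. Since $\overline B$ is simply connected and $u$ is smooth, the oriented orthonormal frame bundle of $u^{\perp}$ is a trivial $SO(2)$-bundle, so there exists a smooth $(e_1, e_2)$ with $(e_1, e_2, u)$ pointwise orthonormal. Any other such frame has the form $\tilde e_1 = \cos\theta\, e_1 + \sin\theta\, e_2$, $\tilde e_2 = -\sin\theta\, e_1 + \cos\theta\, e_2$ for some $\theta: B \to \R$, and a direct computation gives
\[
\langle \tilde e_1, \nabla \tilde e_2\rangle = \nabla \theta + \langle e_1, \nabla e_2\rangle.
\]
Thus imposing $\dv \langle \tilde e_1, \nabla \tilde e_2\rangle = 0$ in $B$ with $\langle \tilde e_1, \nabla \tilde e_2\rangle \cdot \nu = 0$ on $\partial B$ reduces to the scalar Neumann problem $\Delta \theta = -\dv \langle e_1, \nabla e_2\rangle$ in $B$, $\partial_\nu \theta = -\langle e_1, \nabla e_2\rangle \cdot \nu$ on $\partial B$, which is solvable in $W^{1,2}(B)$, for instance by minimizing $\theta \mapsto \int_B |\nabla \theta + \langle e_1, \nabla e_2\rangle|^2$.

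Set $\omega := \langle \tilde e_1, \nabla \tilde e_2\rangle$. Since $\omega$ is divergence-free on $B$ with $\omega \cdot \nu = 0$ on $\partial B$ and $B$ is simply connected, there exists $\psi \in W^{1,2}_0(B)$ with $\omega = \nabla^\perp \psi$, hence $\Delta \psi = \mathrm{curl}\,\omega$. Expanding $\partial_i \tilde e_1$ and $\partial_j \tilde e_2$ in the pointwise basis $(\tilde e_1, \tilde e_2, u)$, only the $u$-components contribute to the pairwise dot products, and one obtains
\[
\mathrm{curl}\,\omega \;=\; \partial_1 \tilde e_1 \cdot \partial_2 \tilde e_2 - \partial_2 \tilde e_1 \cdot \partial_1 \tilde e_2 \;=\; u \cdot (\partial_1 u \wedge \partial_2 u),
\]
so $\mathrm{curl}\,\omega$ is a Jacobian null form in $u$ (a linear combination of $u^i \,\mathrm{Jac}(u^j,u^k)$). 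Coifman--Lions--Meyer--Semmes places such a triple Jacobian in $\mathcal{H}^1$ with norm $\aleq \|u\|_{L^\infty}\|\nabla u\|_{L^2}^2$; Wente's inequality applied to $\Delta \psi = \mathrm{curl}\,\omega$ with Dirichlet data then yields
\[
\|\omega\|_{L^2(B)} = \|\nabla \psi\|_{L^2(B)} \;\aleq\; \|\nabla u\|_{L^2(B)}^2,
\]
which is exactly the claimed bound.

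To extend $(\tilde e_1, \tilde e_2)$ from $\overline B$ to $\R^2$ while staying in $\S^2$, first extend $u$ to some $\tilde u \in \dot H^{1,2}(\R^2, \S^2)$ supported in a larger ball; then on an annulus around $\partial B$ run the trivialization of Step 1 for $\tilde u$, glue with the interior Coulomb frame through a partition of unity followed by pointwise orthogonal projection onto $\tilde u^\perp$, and cut off outside a large ball. All modifications take place outside $B$, so the bound on $\|\omega\|_{L^2(B)}$ is untouched. The crux of the argument is the identification of $\mathrm{curl}\,\omega$ as a Jacobian null form: without this algebraic cancellation one would only recover the subcritical bound $\|\omega\|_{L^2} \aleq \|\nabla u\|_{L^2}$, insufficient to iterate the small-energy analysis of the conformal factor underlying Theorem~\ref{th:ms}. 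Smallness of $\eps_1$ enters only to make the resulting quadratic expression small and to guarantee that the Coulomb frame is globally well-defined with small $W^{1,2}$-norm.
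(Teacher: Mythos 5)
Your proof is correct, and it follows the same overall Coulomb-gauge moving-frame strategy as the paper: trivialize the $SO(2)$-bundle $u^\perp$ over the contractible $\overline B$, minimize $\theta \mapsto \int_B |\nabla\theta + \langle e_1,\nabla e_2\rangle|^2$ to obtain the divergence-free connection with vanishing normal trace, and pass to the stream function $\psi$ with $\nabla^\perp\psi = \langle\tilde e_1,\nabla\tilde e_2\rangle$. Where you diverge from the paper is at the crux, namely how to bound $\|\nabla\psi\|_{L^2(B)}$. You exploit the algebraic identity
\[
\mathrm{curl}\,\langle\tilde e_1,\nabla\tilde e_2\rangle
= \langle\partial_1 u, \tilde e_1\rangle\langle\partial_2 u, \tilde e_2\rangle - \langle\partial_2 u, \tilde e_1\rangle\langle\partial_1 u, \tilde e_2\rangle
= u\cdot(\partial_1 u\wedge\partial_2 u),
\]
which makes the right-hand side of the Dirichlet problem for $\psi$ depend on $u$ alone, independent of the frame. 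Coupled with the CLMS theorem (after extending $u$ to $\R^2$, e.g.\ via the inversion of Lemma~\ref{la:ext:1}) and Wente-type elliptic regularity, this yields $\|\nabla\psi\|_{L^2(B)}\aleq\|\nabla u\|_{L^2(B)}^2$ directly, with no bootstrap. The paper's proof of the fractional generalization Theorem~\ref{th:frlifting} instead arrives, via Proposition~\ref{pr:wenteour} together with Proposition~\ref{pr:eest}, at the self-referential inequality $f(r) \leq C\bigl(f(r)^2 + [u]^2\bigr)$ with $f(r) = \|\nabla\lambda_r\|_{L^2(B(0,r))}$, and must close the loop with the continuity argument of Lemma~\ref{la:contgauge} together with $f(0^+)=0$. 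The reason the paper cannot take your shortcut is that the identification of $\mathrm{curl}\,\langle\tilde e_1,\nabla\tilde e_2\rangle$ as a Jacobian in $u$ has no obvious usable analogue when $[u]_{W^{s,2/s}}$ replaces $\|\nabla u\|_{L^2}$; your route is cleaner for $s=1$ but would not transfer to $s<1$, which is precisely what the paper needs. Your final extension-and-gluing step is sketchier than the rest (one has to verify the projection onto $\tilde u^\perp$ stays nondegenerate on the gluing annulus and that the $\dot H^{1,2}$ bound survives), and the CLMS step requires spelling out the extension of $u$ to $\R^2$, but both are routine and your reliance on them is transparent.
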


The proof of Theorem~\ref{th:fracms} is based on the fact that one can sharpen Theorem~\ref{th:lifting}. 

\begin{theorem}\label{th:frlifting}
For any $s \in (\frac{1}{2},1]$ there exists $\eps_s > 0$ so that the following holds true.

Let $B \subset \R^2$ be any ball. Assume that $(u,e_1,e_2) \in C^\infty(\overline{B},\S^2)$ form pointwise an orthonormal basis of $\R^3$.

If 
\begin{equation}\label{eq:uwucond}
\int_{B} \int_{B} \frac{|u(x) \wedge u(y)|^{\frac{2}{s}}}{|x-y|^{4}}\, dx\, dy \leq \eps_s
\end{equation}
then there exist $\tilde{e}_1$ and $\tilde{e}_2 \in W^{s,\frac{n}{s}}(B(0,1),\S^2)$ such that
\[
 (\tilde{e}_1, \tilde{e}_2, u ) \quad \mbox{form an orthonormal basis of $\R^3$ a.e. in $B(0,1)$},
\]
and
\[
 \dv(\langle \tilde{e}_1, \nabla \tilde{e}_2 \rangle_{\R^3}) = 0 \quad \mbox{in $B$},
\]
and 
\begin{equation}\label{eq:frliftest}
[\tilde{e}_1]_{W^{s,\frac{2}{s}}(B)}^{\frac{2}{s}} + [\tilde{e}_2]_{W^{s,\frac{2}{s}}(B)}^{\frac{2}{s}}  + \|\langle \tilde{e}_1,\nabla \tilde{e}_2 \rangle \|_{L^2}  \aleq \int_{B} \int_{B} \frac{|u(x) \wedge u(y)|^{\frac{2}{s}}}{|x-y|^{4}}\, dx\, dy.
\end{equation}
\end{theorem}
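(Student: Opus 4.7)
The strategy is to adapt H\'elein's construction behind Theorem~\ref{th:lifting} to the fractional setting. The plan is to pick any smooth initial orthonormal frame $(e_1,e_2,u)$ on $\overline{B}$ and rotate by an angle $\theta$ about $u$, setting $\tilde e_1=\cos\theta\,e_1+\sin\theta\,e_2$ and $\tilde e_2=-\sin\theta\,e_1+\cos\theta\,e_2$. The elementary identity
\[
\langle \tilde e_1,\nabla \tilde e_2\rangle = \langle e_1,\nabla e_2\rangle+\nabla\theta
\]
reduces the Coulomb condition $\dv\langle \tilde e_1,\nabla \tilde e_2\rangle=0$ to the Poisson equation $-\Delta\theta=\dv\langle e_1,\nabla e_2\rangle$ in $B$, supplemented by a Neumann boundary condition that fixes $(\tilde e_1,\tilde e_2)$ up to a single global rotation.

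The first subtask is to exhibit an initial frame $(e_1,e_2)$ whose $W^{s,\frac{2}{s}}(B)$-seminorm is quantitatively controlled by the left-hand side of \eqref{eq:uwucond}. We will use the pointwise inequality $|u(x)-u(y)|\le \sqrt{2}\,|u(x)\wedge u(y)|$, valid whenever $\langle u(x),u(y)\rangle\ge 0$; the smoothness of $u_k$ together with the smallness assumption rules out antipodal flips by a connectedness argument, giving $[u]_{W^{s,\frac{2}{s}}(B)}^{\frac{2}{s}}\aleq \eps_s$ and placing $u$ into a small spherical cap around some $u_0\in\S^2$. On this cap we can transport a reference orthonormal pair perpendicular to $u_0$ along the rotation sending $u_0$ to $u(x)$; Gagliardo-type composition rules for $W^{s,\frac{2}{s}}$-maps into a smooth manifold then yield $[e_\alpha]_{W^{s,\frac{2}{s}}(B)}^{\frac{2}{s}}\aleq [u]_{W^{s,\frac{2}{s}}(B)}^{\frac{2}{s}}$.

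The Coulomb-gauged one-form $\langle \tilde e_1,\nabla \tilde e_2\rangle$ is divergence-free in $B$ and therefore equals $\nabla^\perp\phi$ for a scalar $\phi$. Decomposing $\partial_\alpha \tilde e_i$ in the orthonormal frame $(\tilde e_1,\tilde e_2,u)$ produces the classical two-dimensional identity
\[
\curl\langle \tilde e_1,\nabla \tilde e_2\rangle=\langle u,\partial_1 u\wedge \partial_2 u\rangle,
\]
so $\Delta\phi=\langle u,\partial_1 u\wedge \partial_2 u\rangle$. The heart of the argument, and the genuinely new content beyond the classical case, is a fractional Wente-type estimate bounding $\|\nabla\phi\|_{L^2(B)}$ directly by the right-hand side of \eqref{eq:uwucond}. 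We expect to prove it by rewriting the Jacobian $\langle u,\partial_1 u\wedge \partial_2 u\rangle$ as a distributional divergence $\sum_\alpha \partial_\alpha T_\alpha$ in which each $T_\alpha$ is a bilinear form in $u(x)\wedge u(y)$-style differences of fractional order $s$, and then bounding $\|T_\alpha\|_{L^1(B)}$ by Cauchy--Schwarz on the resulting double integral against the hypothesis. The requisite compensated-compactness is exactly of the type handled via three-term commutators and Hardy-space methods in \cite{S15}.

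Given the $L^2$-bound on $\langle \tilde e_1,\nabla \tilde e_2\rangle$, we recover a $W^{s,\frac{2}{s}}(B)$-bound on $\theta$ by inverting the Poisson equation in the fractional scale, and then $[\tilde e_\alpha]_{W^{s,\frac{2}{s}}(B)}^{\frac{2}{s}}\aleq [e_\alpha]_{W^{s,\frac{2}{s}}(B)}^{\frac{2}{s}}+[\theta]_{W^{s,\frac{2}{s}}(B)}^{\frac{2}{s}}$ follows from standard Leibniz and composition estimates applied to the rotated frame, which gives \eqref{eq:frliftest}. The main obstacle throughout is the fractional Wente estimate of the previous paragraph; every other step transparently mirrors the classical $s=1$ argument.
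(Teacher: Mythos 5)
Your proposal correctly identifies the overall Coulomb-gauge/Neumann-problem skeleton and the role of a fractional Wente estimate, but it has two genuine gaps that the paper's proof is specifically engineered to handle.

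First, the claim that the hypothesis forces $u$ to lie in a small spherical cap is false, even in the classical case $s=1$. A bound $\int_B|\nabla u|^2<\eps$ does not confine $u(B)$ to a cap: $W^{1,2}(\R^2)$ does not embed in $L^\infty$ or in small-BMO, and one can build smooth $u$ with $\|\nabla u\|_{L^2}$ arbitrarily small whose image sweeps out all of $\S^2$ (a slowly growing logarithmic profile). The same applies to the fractional condition. Your construction of the initial frame by ``transporting a reference pair along the rotation sending $u_0$ to $u(x)$'' therefore breaks down (the rotation is ill-defined near antipodal points). Fortunately this step is unnecessary: the orthonormal system $(e_1,e_2,u)$ is \emph{given} in the statement, and the paper's Proposition~\ref{pr:eest} controls $[\tilde e_1]_{W^{s,2/s}}+[\tilde e_2]_{W^{s,2/s}}$ for \emph{any} orthonormal completion in terms of $\|\langle\tilde e_1,\nabla\tilde e_2\rangle\|_{L^2}$ and $[u]_{W^{s,2/s}}$, without constructing a preferred frame. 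Similarly, your reduction from the wedge condition to $[u]_{W^{s,2/s}}\aleq\eps_s$ by ``connectedness plus the pointwise inequality'' cannot work directly: the pointwise inequality $|u(x)-u(y)|\aleq|u(x)\wedge u(y)|$ fails at antipodal pairs and the wedge integrand is likewise small there, so the two conditions do not transfer naively. The paper's Proposition~\ref{pr:uwusmalluwsmall} instead runs a nonlinear absorption: the Lagrange decomposition gives $[u]_{W^{s,2/s}(B(0,r))}\leq C[u]^2_{W^{s,2/s}(B(0,r))}+g(r)$, and one must argue via a continuity-in-$r$ argument (the seminorm is continuous in $r$ and vanishes as $r\to 0$) to stay on the small-root branch of the quadratic. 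Nothing in your sketch replaces this.

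Second, and independently, your outline omits precisely this kind of continuity argument at the level of the Coulomb gauge itself. Once you get $\|\nabla\lambda_r\|_{L^2}\leq C\brac{\|\nabla\lambda_r\|_{L^2}^2+[u]_{W^{s,2/s}}^2}$ from the Wente estimate combined with Proposition~\ref{pr:eest}, this inequality alone does not give an a priori bound: it is compatible with $\|\nabla\lambda_r\|_{L^2}$ being large. The paper closes the loop via Lemma~\ref{la:contgauge} (continuous dependence of the minimal gauge energy on the radius, with limit $0$ at $r\to 0^+$), exactly as in H\'elein's argument. Your proposal, by trying to go ``directly'' from the wedge quantity to $\|\nabla\phi\|_{L^2}$ via a speculative rewriting of the Jacobian as a divergence of bilinear forms in wedge-differences, sidesteps the structure that makes the proof close, and it is not at all clear that such a rewriting bounded in $L^1$ exists. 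The paper's route---Wente in the form $\|\nabla\lambda\|_{L^2}\aleq[e_1]_{W^{s,2/s}}[e_2]_{W^{s,2/s}}$ (Proposition~\ref{pr:wente1}), combined with the frame estimate (Proposition~\ref{pr:eest}), followed by the continuity/absorption argument---is what actually establishes~\eqref{eq:frliftest}.
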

\begin{remark}
At least for $s < 1$ large enough, one can also prove a version of Theorem~\ref{th:frlifting} in the setting of Lorentz spaces, replacing the smallness assumption \eqref{eq:uwucond} with the condition that $\|\laps{s} u\|_{L^{(2,\infty)}} < \eps_s$. For $s =1$ this was used for the energy quantization analysis of the Willmore energy in neck regions, see \cite[Lemma IV.3]{BR14} and \cite{LR18}. We will not follow this path here.
\end{remark}
The outline of the remaining paper is as follows. In Section~\ref{s:ingredients} we state the main technical results that lead to a proof of Theorem~\ref{th:frlifting} and \ref{th:fracms}. The proofs of Theorem~\ref{th:frlifting} is then given in Section~\ref{s:prooffrlifting} and the proof of Theorem~\ref{th:fracms} is given in Section~\ref{s:prooffracms}. The results of Section~\ref{s:ingredients} are proven in the remaining chapters.

\section{Main technical ingredients}\label{s:ingredients}
In this section we state the main technical ingredients needed for the proof of Theorem~\ref{th:fracms} and \ref{th:frlifting}. These are mainly sharpening of classical local results to the fractional case (which is surprisingly involved for some of these results). We first introduce the fractional Sobolev space. For a more thorough (and technical) introduction we refer to Section~\ref{s:fracsob}.

Let $\Omega \subset \R^n$ be an open set. For $s \in (0,1)$ the Slobodeckij-Gagliardo Sobolev space $W^{s,p}(\Omega)$ is defined as all functions $f \in L^p(\Omega)$ such that $[f]_{W^{s,p}(\Omega)} < \infty$. Here, we use the Gagliardo seminorm
\[
[f]_{W^{s,p}(\Omega)} := \brac{\int_{\Omega} \int_{\Omega} \frac{|f(x)-f(y)|^p}{|x-y|^{n+sp}}\, dx\, dy }^{\frac{1}{p}}.
\]
For $s = 1$ we use the (abuse of, see below) notation
\[
[f]_{W^{1,p}(\Omega)} = \|\nabla f\|_{L^p(\Omega)}.
\]
For $s = 0$ we denote $[f]_{W^{0,p}(\Omega)} = \|f\|_{L^p(\Omega)}$.

Now let us begin with our first ingredient. If $u: B \to \S^2$ then $u \cdot \nabla u = 0$. By Lagrange's identity we then get
\[
\|\nabla u\|_{L^2(B)} \aeq \|u \wedge \nabla u\|_{L^2(B)}.
\]
In some sense this still holds for our fractional normal curvature quantity. One way is easy,  
Since $u(x) \wedge u(x) = 0$ and thus $|u(x) \wedge u(y)| \aleq |u(x)|\, |u(x)-u(y)|$ we have the trivial estimate 
\[
\int_{B}\int_{B}\frac{|u(x)\wedge u(y)|^{\frac{2}{s}}}{|x-y|^{4}}\, dx\, dy \leq 
\|u\|_{L^\infty(B)}^{\frac{2}{s}}\ [u]_{W^{s,\frac{2}{s}}(B)}^{\frac{2}{s}}.
\]
The other direction in general has no reason to hold, however it holds when the fractional normal curvature quantity is small.
\begin{proposition}\label{pr:uwusmalluwsmall}
For $s \in (0,1)$ there exists $\eps_s > 0$ such that the following holds. For any ball $B \subset \R^2$ and any $u \in W^{s,\frac{2}{s}}(B,\S^{2})$, if
\[
\int_{B}\int_{B}\frac{|u(x)\wedge u(y)|^{\frac{2}{s}}}{|x-y|^{4}}\, dx\, dy < \eps_s
\]
then
\[
[u]_{W^{s,\frac{2}{s}}(B)} \aeq \brac{\int_{B}\int_{B}\frac{|u(x)\wedge u(y)|^{\frac{2}{s}}}{|x-y|^{4}}\, dx\, dy}^{\frac{s}{2}}.
\]
\end{proposition}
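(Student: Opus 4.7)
The starting point is the elementary identity
\[
|u(x)-u(y)|^2\,(1+u(x)\cdot u(y)) = 2\,|u(x)\wedge u(y)|^2
\]
for $u(x),u(y)\in\S^2$, obtained from $|u-u'|^2 = 2(1-u\cdot u')$ and $|u\wedge u'|^2 = (1-u\cdot u')(1+u\cdot u')$. On the ``good'' region $G := \{u(x)\cdot u(y)\geq 0\}$ we have $(1+u\cdot u')^{1/s}\geq 1$, so
\[
|u(x)-u(y)|^{2/s} = 2^{1/s}(1-u\cdot u')^{1/s} \leq 2^{1/s}(1-u\cdot u')^{1/s}(1+u\cdot u')^{1/s} = 2^{1/s}|u(x)\wedge u(y)|^{2/s},
\]
and the contribution of $G$ to $[u]_{W^{s,2/s}(B)}^{2/s}$ is at most $2^{1/s}F(u)$. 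On the ``bad'' region $E := \{u(x)\cdot u(y)<0\}$ we have $1-u\cdot u'\in(1,2]$, so $|u-u'|^{2/s}\leq 2^{2/s}$ pointwise, and the proposition reduces to the key estimate
\[
\int\!\!\int_E \frac{dx\,dy}{|x-y|^4} \;\aleq\; F(u).
\]

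To exploit the smallness of $F(u)$, I would pass to the matrix-valued map $w := u\otimes u$, for which the Frobenius identity $|w(x)-w(y)|^2 = 2|u(x)\wedge u(y)|^2$ yields $[w]_{W^{s,2/s}(B)}^{2/s} = 2^{1/s}F(u)<2^{1/s}\eps_s$. The critical ($sp=n=2$) fractional Poincar\'e inequality then gives $|B|^{-1}\int_B|w-P|^{2/s}\,dx\aleq F(u)$, where $P := |B|^{-1}\int_B w$ is symmetric positive semidefinite with $\mathrm{tr}(P)=1$. The pointwise rank-one constraint $|w|^2\equiv 1$ combined with $|B|^{-1}\int_B|w-P|^2 = 1-|P|^2$ and Jensen's inequality (using $2/s\geq 2$) forces $1-|P|^2\aleq F(u)^s$. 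Diagonalizing $P=\sum\lambda_i v_i\otimes v_i$ with $\lambda_1\geq\lambda_2\geq\lambda_3\geq 0$, one deduces $1-\lambda_1\aleq F(u)^s$; setting $e := v_1$, this is $|B|^{-1}\int_B(1-(u\cdot e)^2)\,dx\aleq F(u)^s$, so $u$ is on average close to the antipodes $\pm e$.

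After replacing $u$ by $-u$ if necessary (preserving both $F(u)$ and $[u]_{W^{s,2/s}(B)}$), introduce the caps $A^\pm := \{\pm u\cdot e\geq 3/4\}$ and the transition set $A^0 := B\setminus(A^+\cup A^-)$, which by Chebyshev satisfies $|A^0|\aleq F(u)^s$. A direct check using $u=(u\cdot e)e+u^\perp$ shows
$E \subseteq (A^+\!\times\! A^-)\cup(A^-\!\times\! A^+)\cup(A^0\times B)\cup(B\times A^0)$.
For pairs with $x\in A^0$ and $y\in A^\pm$, elementary trigonometry yields $|u(x)\wedge u(y)|\gtrsim 1$, so these contributions to the key estimate are absorbed by $F(u)$.

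The hard part will be the ``antipodal-cross'' piece $\int_{A^+\times A^-}|x-y|^{-4}\,dx\,dy$: here $u(x)\approx e$ and $u(y)\approx -e$ make $|u\wedge u'|\approx 0$ and $|u-u'|\approx 2$, so this mass is invisible to $F(u)$. To handle it I would sharpen the fractional Poincar\'e bound into a John--Nirenberg type exponential estimate valid in the critical regime $sp=n$, forcing $|A^-|$ to be superpolynomially small in $F(u)^s$; combined with the fact that for $sp\geq 1$ nontrivial characteristic functions are excluded from $W^{s,2/s}$, so that any sign change of $u\cdot e$ across $B$ carries a quantitative cost already captured by $[w]_{W^{s,2/s}(B)}^{2/s}\aeq F(u)$, this should close the key estimate. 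Alternatively, a concentration-compactness argument applied to a putative counterexample sequence $u_k$ with $F(u_k)\to 0$ and $[u_k]^{2/s}/F(u_k)\to\infty$ would produce a non-constant $\{\pm v\}$-valued limit in $W^{s,2/s}(B,\S^2)$, contradicting the same exclusion.
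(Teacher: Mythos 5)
Your decomposition into good and bad regions $G = \{u(x)\cdot u(y)\geq 0\}$, $E = \{u(x)\cdot u(y) < 0\}$, and the observation that $w=u\otimes u$ turns the normal-curvature quantity into a genuine Gagliardo seminorm, are appealing, but the argument stalls exactly where you flag it will: you never actually establish the key estimate $\iint_E |x-y|^{-4}\,dx\,dy \aleq F(u)$, and neither of the two remedies you sketch closes it. The concentration-compactness alternative needs an a priori bound on $[u_k]_{W^{s,2/s}}$ to extract a converging subsequence, but the hypotheses give none: you only know $F(u_k)\to 0$, and your own reduction $[u_k]^{2/s}\aleq F(u_k)+\iint_{E_k}|x-y|^{-4}$ is circular, since the second term is precisely what you are trying to control. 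Even if a limit existed, a qualitative contradiction argument cannot produce the quantitative two-sided comparability claimed. The John--Nirenberg / exponential-decay route is plausible in spirit (critical Gagliardo seminorms do exclude indicator functions, so a sign change of $u\cdot e$ must cost energy), but turning that heuristic into the quantitative bound $\iint_E|x-y|^{-4}\aleq F(u)$ is a nontrivial missing step, not a detail.

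The paper avoids the bad set entirely. Instead of partitioning $B\times B$, it uses a pointwise estimate that is uniform in $x,y$: decomposing $u(x)-u(y)$ in the basis at $u(x)$ gives $|u(x)-u(y)|\aleq |u(x)\wedge u(y)| + |u(x)\cdot(u(x)-u(y))| = |u(x)\wedge u(y)|+\tfrac12|u(x)-u(y)|^2$. Raising to the power $\tfrac{2}{s}$, integrating against $|x-y|^{-4}$, and applying the Sobolev embedding $W^{s,2/s}\hookrightarrow W^{s/2,4/s}$ on a ball yields the closed quadratic inequality $[u]_{W^{s,2/s}(B)}\leq C[u]^2_{W^{s,2/s}(B)} + F(u)^{s/2}$. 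The antipodal regime causes no separate difficulty because when $|u(x)-u(y)|$ is of order one the quadratic term simply dominates, and that term is absorbed by a continuity argument: setting $f(r):=[u]_{W^{s,2/s}(B(0,r))}$, absolute continuity of the Gagliardo double integral gives $f(0^+)=0$ and $f$ continuous, so $f(r)$ must stay on the lower branch $f(r)\leq F_1(r)\aleq g(r)$ of the quadratic for all $r$. This bootstrap is the step your sketch is missing, and it is what makes the smallness hypothesis effective without ever having to understand the geometry of $E$.
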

Proposition~\ref{pr:uwusmalluwsmall} will be proven in Section~\ref{s:proofuwusmall}.

Another ingredient is an extension to the fractional order of the following (again almost trivial) observation:

If $(e_1,e_2,u)$ for almost everywhere a orthonormal basis of $\R^3$ then, observing that $e_1 \cdot \nabla e_1 \equiv 0$,
\[
\begin{split}
|\nabla e_1| \leq& |\langle u , \nabla e_1\rangle | + |\langle e_2, \nabla e_1 \rangle| \\
=& |\langle \nabla u , e_1\rangle | + |\langle e_2, \nabla e_1 \rangle| \\
\leq& |\nabla u| + |\langle e_2, \nabla e_1 \rangle|.
\end{split}
\]
In particular the $W^{1,2}$-energy of $e_1$ is controlled by $\langle e_1,\nabla e_2\rangle$ and $\nabla u$. This latter fact still holds somewhat true in the fractional case but the proof is much more involved and relies on several commutator estimates. 
\begin{proposition}\label{pr:eest}
Let $s \in (\frac{1}{2},1]$ and and $B \subset \R^2$ be a ball or all of $\R^2$.

For any orthonormal system $(e_1,e_2,u)$ of $\R^3$ we have the estimate
\[
[e_1]_{W^{s,\frac{2}{s}}(B)} + [e_2]_{W^{s,\frac{2}{s}}(B)} \aleq \|\langle e_1, \nabla e_2 \rangle \|_{L^2(B)} + [u]_{W^{s,\frac{2}{s}}(B)} + [u]_{W^{s,\frac{2}{s}}(B)}\, \brac{[e_1]_{W^{s,\frac{2}{s}}(B)} + [e_2]_{W^{s,\frac{2}{s}}(B)}}.
\]
In particular there exists $\eps_s > 0$ such that if for any orthonormal system $(e_1,e_2,u)$ of $\R^3$ we have
\[
[u]_{W^{s,\frac{2}{s}}(B)} \leq \eps_s,
\]
then
\[
[e_1]_{W^{s,\frac{2}{s}}(B)} + [e_2]_{W^{s,\frac{2}{s}}(B)} \aleq \|\langle e_1 \nabla e_2 \rangle \|_{L^2(B)} + [u]_{W^{s,\frac{2}{s}}(B)}.
\]
\end{proposition}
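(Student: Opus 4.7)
The plan is to expand the Gagliardo kernel $e_1(x) - e_1(y)$ along the moving orthonormal frame $(e_1(x), e_2(x), u(x))$ and bound each of the three resulting scalar projections separately. Using $\langle e_1(y), e_2(y)\rangle = 0 = \langle e_1(y), u(y)\rangle$ together with the identity $\langle e_1(x) - e_1(y), e_1(x)\rangle = \tfrac{1}{2}|e_1(x) - e_1(y)|^2$, this gives the pointwise decomposition
\[
e_1(x) - e_1(y) = \tfrac{1}{2}|e_1(x) - e_1(y)|^2\, e_1(x) - \langle e_1(y), e_2(x) - e_2(y)\rangle\, e_2(x) - \langle e_1(y), u(x) - u(y)\rangle\, u(x).
\]
Writing $v_{xy} := e_1(x)-e_1(y)$, the plan is that the $u$-summand will immediately contribute $[u]_{W^{s,2/s}(B)}$, the $e_2$-summand will be the one carrying the connection $\|\langle e_1, \nabla e_2\rangle\|_{L^2}$, and the quadratic $|v_{xy}|^2$ piece will produce the nonlinear term $[u]_{W^{s,2/s}}([e_1]_{W^{s,2/s}} + [e_2]_{W^{s,2/s}})$.

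The central step is the $e_2$-summand. I would extract the connection by writing, for smooth data, $e_2(x)-e_2(y) = (x-y)\cdot \int_0^1 \nabla e_2(y + t(x-y))\,dt$, and then for each $z = y+t(x-y)$ splitting
\[
\langle e_1(y), \nabla e_2(z)\rangle = \langle e_1(z), \nabla e_2(z)\rangle + \langle e_1(y) - e_1(z), \nabla e_2(z)\rangle.
\]
The first piece, after substitution and integration against the Gagliardo kernel $|x-y|^{-4}$, amounts to a Riesz potential of order $1-s$ applied to $\langle e_1, \nabla e_2\rangle$; the Hardy-Littlewood-Sobolev inequality in $\R^2$ maps $L^2 \to L^{2/s}$ at exactly the critical scaling $\tfrac{1}{2} - \tfrac{1}{2/s} = \tfrac{1-s}{2}$, which after raising to the $(2/s)$-th power delivers $\|\langle e_1, \nabla e_2\rangle\|_{L^2(B)}$. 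The second, commutator piece $\langle e_1(y)-e_1(z), \nabla e_2(z)\rangle$ is a product of two differences; it is handled by the fractional Leibniz / three-term commutator estimates that the author develops in Section~\ref{s:ingredients}. Here one uses the orthogonality $e_1\cdot e_2 = 0$ to trade the naive factorisation "$[e_1]\cdot[e_2]$" for "$[u]\cdot[e_i]$", since $e_1 \wedge e_2 = \pm u$ couples the $e_1,e_2$ system to $u$.

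The quadratic piece $\tfrac{1}{2}|v_{xy}|^2 e_1(x)$ I would estimate via the orthogonality identity
\[
|v_{xy}|^2 = |\langle e_1(y), e_2(x)-e_2(y)\rangle|^2 + |\langle e_1(y), u(x)-u(y)\rangle|^2 + \tfrac{1}{4}|v_{xy}|^4,
\]
which yields $|v_{xy}|^2 \aleq |e_2(x)-e_2(y)|^2 + |u(x)-u(y)|^2 + \tfrac{1}{4}|v_{xy}|^4$. After raising to the $(1/s)$-th power and integrating against $|x-y|^{-4}$, the $|v_{xy}|^{4/s}$ remainder is bounded using the critical Sobolev embedding $W^{s,2/s}(\R^2) \hookrightarrow W^{s/2,4/s}(\R^2)$ (both have $sp = n = 2$), which lets one write this term as a product of two Gagliardo-type integrals and then, by H\"older and reuse of the orthogonality identity, as $[u]_{W^{s,2/s}}\cdot[e_1]_{W^{s,2/s}}$.

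Applying the same three-step argument to $e_2$ and summing gives the first stated inequality. The second form of the bound is then obtained by absorbing the product term into the left-hand side under the smallness hypothesis $[u]_{W^{s,2/s}} \le \eps_s$. The main technical hurdle I expect is the commutator step in the third paragraph: producing the factorisation "$[u]\cdot[e_i]$" in place of the naive "$[e_1]\cdot[e_2]$" requires using all three orthogonality relations $|e_i|=1$, $e_1\cdot e_2 = 0$, $e_i\cdot u = 0$ simultaneously together with the delicate commutator estimates of Section~\ref{s:ingredients}. This is also where the restriction $s>\tfrac{1}{2}$ must enter, consistent with the introductory remark that the borderline case $s=\tfrac{1}{2}$ requires abandoning the Slobodeckij formulation in favour of a $(-\Delta)^{s/2}$-based one.
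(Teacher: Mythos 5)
Your decomposition differs from the paper's at the very first step. The paper splits $e_1(x)-e_1(y)$ only into its $u(x)$-component and its $\Pi^\perp(u(x))$-component, whereas you expand along the full moving frame $(e_1(x),e_2(x),u(x))$ and thereby introduce the self-interaction coefficient $\tfrac{1}{2}|e_1(x)-e_1(y)|^2$ on the $e_1(x)$-direction. That term is where the argument fails. Feeding it into the Gagliardo seminorm and using the critical embedding $W^{s,2/s}\hookrightarrow W^{s/2,4/s}$ produces, after taking the $(s/2)$-th root, an $[e_1]_{W^{s,2/s}(B)}^2$ contribution --- a $([e_1]+[e_2])^2$-type term which is not present on the right-hand side of the Proposition and cannot be absorbed, since no smallness of $[e_1]+[e_2]$ is available (only $[u]$ is assumed small, and only in the second display). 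Your plan to ``reuse the orthogonality identity'' to trade this for $[u]([e_1]+[e_2])$ does not go through: to leading order the Pythagorean identity gives $|e_1(x)-e_1(y)|^2 \aleq |e_2(x)-e_2(y)|^2 + |u(x)-u(y)|^2$, so after integrating one only obtains $[e_1]^2 \aleq [e_2]^2 + [u]^2$, and adding the symmetric bound for $e_2$ gives the vacuous $[e_1]^2+[e_2]^2\aleq [e_1]^2+[e_2]^2+[u]^2$.

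The paper's two-fold split avoids this entirely, because in its estimate every ``error'' factor is a difference of $u$ alone. Concretely, Theorem~\ref{th:eestgen} writes $e(x)-e(y)$ via the Newtonian potential identity
\[
e(x)-e(y)=\int_{\R^n}\Bigl(\frac{(x-z)^\alpha}{|x-z|^n}-\frac{(y-z)^\alpha}{|y-z|^n}\Bigr)\,\partial_\alpha e(z)\,dz,
\]
pairs the result against $u^\perp(x)+u^\perp(y)=2u^\perp(z)+\bigl(u^\perp(x)+u^\perp(y)-2u^\perp(z)\bigr)$, and estimates the main piece by Proposition~\ref{pr:sob1} (yielding $\|\Pi^\perp(u)\nabla e\|_{L^n}=\|\langle e_1,\nabla e_2\rangle\|_{L^2}$ when $n=2$) and the three-point commutator piece by Proposition~\ref{pr:bigcommie} (yielding precisely $[u][e]$ and nothing quadratic in $[e]$). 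Your straight-line fundamental-theorem-of-calculus representation of $e_2(x)-e_2(y)$ is not the same object: the segment average is not a convolution with a radial kernel in $x-y$, so the Gagliardo double integral does not become a Riesz potential of $\langle e_1,\nabla e_2\rangle$ and the HLS step you invoke does not apply as stated. Finally, the paper first extends $(e_1,e_2,u)$ from $B$ to $\R^2$ by the inversion of Lemma~\ref{la:ext:1} (which preserves the pointwise orthonormality) before invoking the whole-space Theorem~\ref{th:eestgen}; this localization step is also missing from your sketch.
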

Proposition~\ref{pr:eest} will be proven in Section~\ref{s:eest}.

We will also employ a refined version of Wente's inequality
\begin{proposition}\label{pr:wente1}
Let $B \subset \R^2$ be a ball. Assume $\lambda_0 \in H^1_0(B)$ satisfies an equation
\[
\begin{cases}
\lap \lambda_0 = \langle \nabla^\perp f,\nabla g \rangle \quad& \mbox{in $B$}\\
\lambda_0 = 0 \quad &\mbox{on $\partial B$}.
\end{cases}
\]
Then for any $s \in (\frac{1}{2},1]$,
\[
\|\nabla \lambda_0\|_{L^2(B)} \leq C\, [f]_{W^{s,\frac{2}{s}}(B)}\, [g]_{W^{s,\frac{2}{s}}(B)}.
\]
\end{proposition}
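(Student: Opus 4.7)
The plan is to imitate the classical duality proof of Wente's inequality. Testing the equation against $\lambda_0 \in H^1_0(B)$ and integrating by parts yields
\[
\|\nabla \lambda_0\|_{L^2(B)}^2 = -\int_B \lambda_0\, \langle \nabla^\perp f, \nabla g\rangle\, dx.
\]
Extending $\lambda_0$ by zero to all of $\R^2$ produces an element of $\dot{H}^{1,2}(\R^2)$, and the two-dimensional critical Sobolev embedding gives
\[
\|\lambda_0\|_{BMO(\R^2)} \lesssim \|\nabla \lambda_0\|_{L^2(B)}.
\]
It therefore suffices to establish the fractional Coifman--Lions--Meyer--Semmes (CLMS) type estimate
\[
\left|\int_B \varphi\, \langle \nabla^\perp f, \nabla g\rangle\, dx\right| \lesssim \|\varphi\|_{BMO(\R^2)}\, [f]_{W^{s,2/s}(B)}\, [g]_{W^{s,2/s}(B)},
\]
for all $\varphi \in H^1_0(B)$ extended by zero. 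Taking $\varphi = \lambda_0$ and absorbing one factor of $\|\nabla \lambda_0\|_{L^2(B)}$ would then yield the claim.

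To prove this bilinear estimate I would first extend $f$ and $g$ from $B$ to $\R^2$ via a standard extension operator that preserves the Gagliardo seminorm $[\cdot]_{W^{s,2/s}}$ up to constants. Next, interpret the Jacobian distributionally through the divergence-form identities
\[
\langle \nabla^\perp f, \nabla g\rangle = \operatorname{div}(f\, \nabla^\perp g) = -\operatorname{div}(g\, \nabla^\perp f),
\]
where the antisymmetrised combination $\tfrac12 \operatorname{div}(f\,\nabla^\perp g - g\,\nabla^\perp f)$ exhibits the compensated-compactness cancellation responsible for the Hardy-space improvement in the integer case. A Littlewood--Paley/paraproduct decomposition of $f \cdot \nabla^\perp g$ into high-low, low-high and high-high pieces, combined with fractional commutator estimates in the spirit of \cite{S15} and the two-dimensional critical embeddings $W^{s,2/s}(\R^2) \hookrightarrow BMO(\R^2) \cap L^q(\R^2)$ for every finite $q$, should then deliver the desired $BMO$-duality bound; equivalently one can argue via a fractional Hardy-space lift of the Jacobian, extending the classical CLMS theorem to the $W^{s,2/s}$ setting.

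The main obstacle is exactly this fractional CLMS-type estimate. For $s<1$ the factor $\nabla g$ is no longer a function, so the Jacobian pairing must be extracted purely at the level of fractional derivatives through commutator identities, and the compensation that usually places $\langle \nabla^\perp f, \nabla g\rangle$ in $\mathcal{H}^1(\R^2)$ must be replaced by a direct argument at the level of $BMO$-duality. The restriction $s>\tfrac{1}{2}$ enters precisely here, since $2s>1$ gives the necessary slack in the overall regularity budget of the bilinear form; at $s=\tfrac12$ the derivative budget becomes critical and a genuinely different (and more technical) formulation would be needed. A final minor point is that one must verify that the extension from $B$ to $\R^2$ and the subsequent localization do not affect the constants, which follows from the scale-invariance of $[\cdot]_{W^{s,2/s}(B)}$ under the critical scaling $sp = n$ in two dimensions.
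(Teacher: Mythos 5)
Your duality setup is correct and mirrors the paper's: both approaches reduce the $L^2$-bound on $\nabla\lambda_0$ to a bilinear (actually trilinear) estimate on $\int \varphi\,\langle\nabla^\perp f,\nabla g\rangle$. You test against $\lambda_0$ itself and use the critical embedding $\dot H^1(\R^2)\hookrightarrow BMO(\R^2)$; the paper instead tests against an arbitrary $\varphi$ with $\|\nabla\varphi\|_{L^{p'}}\leq 1$, which yields an $L^p$-estimate on $\nabla\lambda_0$ for general $p$ that is then specialized to $p=2$. These are equivalent for the case at hand.

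Where you genuinely diverge is in how the key estimate is established. The paper's Proposition~\ref{pr:wente} does not go through paraproducts or a fractional CLMS/Hardy-space theorem at all. Instead it follows Lenzmann--Schikorra: lift $a$, $b$, $\varphi$ to their harmonic extensions on $\R^3_+$, exploit the div-curl structure to write $\int_{\R^2}\varphi\,\langle\nabla^\perp a,\nabla b\rangle \leq \int_{\R^3_+}|\nabla\varphi^h|\,|\nabla a^h|\,|\nabla b^h|$, dominate $|\nabla\varphi^h(\cdot,t)|$ uniformly in $t$ by maximal functions of $\nabla\varphi$ and $\laps{1}\varphi$, and then recognize the remaining $t$-integrals as square-function characterizations of the trace spaces $W^{s,p_1}$ and $W^{1-s,p_2}$. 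Crucially, the paper first proves an \emph{asymmetric} estimate with derivative budget split as $s$ on $a$ and $1-s$ on $b$, and only then deduces the symmetric bound $[a]_{W^{s,2/s}}[b]_{W^{s,2/s}}$ by choosing an intermediate exponent $\tilde s\in(1-s,s)$ and applying Sobolev embedding (Proposition~\ref{pr:sob}); this is exactly where $s>\frac12$ enters, since that interval is nonempty iff $2s>1$. Your intuition that $2s>1$ provides the necessary regularity slack is therefore correct, but the mechanism in the paper is Sobolev embedding between two fractional orders, not a paraproduct frequency budget.

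The actual gap in your proposal is that the fractional CLMS-type inequality
\[
\left|\int_B \varphi\,\langle\nabla^\perp f,\nabla g\rangle\right| \lesssim \|\varphi\|_{BMO}\,[f]_{W^{s,2/s}}\,[g]_{W^{s,2/s}}
\]
is asserted rather than proven. Your outline (``A Littlewood--Paley/paraproduct decomposition... should then deliver the desired bound'') names the right tools but does not carry out the bookkeeping. The high-low and low-high paraproduct pieces would need the divergence-form identity plus the $BMO$ pairing, and the resonant high-high piece is where one would have to exhibit the compensated-compactness cancellation at the level of fractional derivatives; none of this is done. By contrast the paper reduces the trilinear estimate to two concrete lemmas from \cite{LS18} (the pointwise maximal-function bound on the harmonic gradient, and the trace-space characterization), so it is a complete argument modulo a citation. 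If you want to pursue your route, you would essentially be re-proving a fractional CLMS theorem, which is a nontrivial standalone result and should be stated and proved in its own right rather than folded into a proof sketch.
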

Wente's inequality leads also to the following estimate.
\begin{proposition}\label{pr:wenteour}
For any $s \in (\frac{1}{2},1]$ there exists $\eps_s > 0$ such that the following holds. 

Let $B \subset \R^2$ be a ball. For $e_1,e_2,u \in H^1(B,\S^2)$ that form a.e. orthonormal basis of $\R^3$ and that allow for the existence of $\lambda_0 \in H^1_0(B)$ satisfying
\[
\begin{cases}
\lap \lambda_0 = \langle \nabla^\perp e_1,\nabla e_2 \rangle \quad& \mbox{in $B$}\\
\lambda_0 = 0 \quad &\mbox{on $\partial B$}
\end{cases}
\]
If
\[
[u]_{W^{s,\frac{n}{s}}(B)} < \eps_s
\]
then we have have
\[
\|\nabla \lambda_0\|_{L^2(B)} \leq C_s\, \brac{\|\langle e_1, \nabla e_2 \rangle\|_{L^2(B)}^2 + [u]_{W^{s,\frac{2}{s}}(B)}^2}.
\]
The constant $C_s$ in the estimate is independent of the particular ball.
\end{proposition}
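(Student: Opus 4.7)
The plan is to derive Proposition~\ref{pr:wenteour} as a short consequence of the two preceding propositions, combined with an elementary arithmetic step. First, apply the refined Wente estimate of Proposition~\ref{pr:wente1} to $\lambda_0$ with $f = e_1$ and $g = e_2$. This yields the bilinear bound
\[
\|\nabla \lambda_0\|_{L^2(B)} \aleq [e_1]_{W^{s,2/s}(B)}\, [e_2]_{W^{s,2/s}(B)}.
\]
The right-hand side is symmetric in $e_1$ and $e_2$ and does not yet feature either $\langle e_1, \nabla e_2\rangle$ or $u$; the task is to upgrade it.

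This upgrade is exactly the content of Proposition~\ref{pr:eest}. I would choose $\eps_s$ no larger than the smallness constant appearing in Proposition~\ref{pr:eest}, so that the hypothesis $[u]_{W^{s,2/s}(B)} < \eps_s$ triggers the estimate
\[
[e_1]_{W^{s,2/s}(B)} + [e_2]_{W^{s,2/s}(B)} \aleq \|\langle e_1, \nabla e_2\rangle\|_{L^2(B)} + [u]_{W^{s,2/s}(B)}.
\]
Inserting this into the Wente bound above and using the elementary inequality $ab \leq \tfrac{1}{2}(a^2 + b^2)$ produces
\[
\|\nabla \lambda_0\|_{L^2(B)} \aleq \|\langle e_1, \nabla e_2\rangle\|_{L^2(B)}^2 + [u]_{W^{s,2/s}(B)}^2,
\]
which is exactly the claimed inequality.

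It remains to justify that $C_s$ is independent of the particular ball $B$. In dimension $n = 2$, each of the three quantities $\|\nabla \lambda_0\|_{L^2(B)}$, $\|\langle e_1, \nabla e_2\rangle\|_{L^2(B)}$, and $[u]_{W^{s,2/s}(B)}$ is invariant under dilations $x \mapsto Rx$: a routine change of variables shows the Jacobians cancel both for the $L^2$ norm of a gradient in two dimensions and for the Gagliardo seminorm with exponent $sp = n = 2$. Composing with a translation then reduces any ball $B$ to $B(0,1)$, so the constants produced by Propositions~\ref{pr:wente1} and \ref{pr:eest} are universal on the target inequality. No genuinely new obstacle appears at this stage; the substantive difficulty has been absorbed into Proposition~\ref{pr:wente1} (the fractional Wente estimate) and Proposition~\ref{pr:eest} (the frame estimate), which are the two nontrivial ingredients called upon here.
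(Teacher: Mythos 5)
Your proposal is correct and follows essentially the same route as the paper: apply the fractional Wente estimate (Proposition~\ref{pr:wente1}) to obtain $\|\nabla\lambda_0\|_{L^2(B)} \aleq [e_1]_{W^{s,2/s}(B)}[e_2]_{W^{s,2/s}(B)}$, then invoke the small-$u$ frame estimate of Proposition~\ref{pr:eest} and finish with $ab \aleq a^2+b^2$. Your extra remark on scale and translation invariance (all three quantities are dilation-invariant in $n=2$ since $sp=n$) is a correct justification of the ball-independence of $C_s$, which the paper leaves implicit.
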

Proposition~\ref{pr:wente1} and Proposition~\ref{pr:wenteour} will be proven in Section~\ref{s:proofwenteour}.

\section{Proof of the lifting theorem, Theorem~\ref{th:frlifting}}\label{s:prooffrlifting}

The proof of Theorem~\ref{th:frlifting} is an adaptation of the proof in \cite[Lemma 5.1.4]{Helein-2002}, which is based on a continuity argument. For this we first observe the following continuous dependence for a variational problem.

\begin{lemma}[Continuous dependence of Gauge]\label{la:contgauge}
For $g \in L^2(\R^2,\R^2)$ let
\[
f(r) :=  \inf_{\theta} \|\nabla \theta + g\|_{L^2(B(0,r)},
\]
where the infimum is taken over $\theta \in W^{1,2}(B(0,r))$.

Then $f$ is continuous in $(0,\infty)$. 
Moreover, $\lim_{r \to 0^+} f(r) = 0$.
\end{lemma}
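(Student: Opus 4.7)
The plan is to recast $f(r)$ as the norm of an $L^2$-orthogonal projection so that both one-sided continuities become transparent. The $L^2$-orthogonal complement of $\{\nabla \theta : \theta \in W^{1,2}(B(0,r))\}$ in $L^2(B(0,r),\R^2)$ consists, by integration by parts, of divergence-free vector fields with vanishing normal trace; since $B(0,r)$ is simply connected and smooth, this complement equals $\nabla^\perp W^{1,2}_0(B(0,r))$ (standard Hodge decomposition on the disk). With $P$ denoting the orthogonal projection onto this subspace, $f(r) = \|Pg\|_{L^2(B(0,r))}$, and one obtains the dual supremum representation
\[
f(r) = \sup \left\{\int_{B(0,r)} g \cdot \nabla^\perp \psi \,:\, \psi \in W^{1,2}_0(B(0,r)),\ \|\nabla \psi\|_{L^2(B(0,r))} \leq 1 \right\}.
\]

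From this sup formulation, monotonicity $f(r') \leq f(r)$ for $r' \leq r$ is immediate (extend any admissible $\psi$ on $B(0,r')$ to $B(0,r)$ by zero). Combined with the crude bound $f(r) \leq \|g\|_{L^2(B(0,r))}$ (take $\theta \equiv 0$ in the primal definition), absolute continuity of the Lebesgue integral gives $\lim_{r \to 0^+} f(r) = 0$. Left-continuity at $r > 0$ also falls out of the sup formulation: given $\eps > 0$ I would pick an admissible $\psi$ realizing $f(r)$ up to $\eps/2$, then use density of $C_c^\infty(B(0,r))$ in $W^{1,2}_0(B(0,r))$ to approximate $\psi$ by one with $\supp \psi \subset B(0, r - \delta)$ for some $\delta > 0$; the same $\psi$ is then admissible on $B(0, r')$ for every $r' \in (r - \delta, r)$, so $f(r') \geq f(r) - \eps$.

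Right-continuity I would instead read from the primal problem. Let $\theta^*_r$ be the minimizer on $B(0,r)$ and extend it to $\tilde\theta \in W^{1,2}(B(0, 2r))$, for instance by reflection across $\partial B(0,r)$. For $r' \in (r, 2r)$ the restriction of $\tilde\theta$ is a valid competitor on $B(0,r')$, giving
\[
f(r')^2 \leq \|\nabla \tilde\theta + g\|_{L^2(B(0, r'))}^2 = f(r)^2 + \|\nabla \tilde\theta + g\|_{L^2(B(0, r') \setminus B(0, r))}^2,
\]
and the annular term vanishes as $r' \to r^+$ by absolute continuity of the integral of the fixed $L^2$-function $|\nabla \tilde\theta + g|^2$. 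Together with monotonicity this yields $\lim_{r' \to r^+} f(r') = f(r)$.

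The only step that requires actual care is the Hodge-decomposition identification of the orthogonal complement of gradients (and hence the legitimacy of the dual formulation); once this is pinned down, all three claims reduce to short competitor arguments and I do not anticipate any further essential obstacle.
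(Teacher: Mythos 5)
Your proof is correct, but it follows a genuinely different route from the paper's. The paper handles continuity in one stroke by a scaling argument: it introduces $\tilde g(x) := \frac{r}{R}g(\frac{r}{R}x)$, observes that $f(r) = \tilde f(R)$ where $\tilde f$ is the analogous infimum with $\tilde g$ in place of $g$, and then bounds $f(R)^2 - f(r)^2 = f(R)^2 - \tilde f(R)^2$ by comparing the two functionals on the \emph{same} ball $B(0,R)$ using the minimizer of $\tilde f(R)$ as a competitor for $f(R)$; the difference is then controlled by $\|g - \tilde g\|_{L^2(B(0,R))}$, which tends to zero by dominated convergence. Your approach instead splits continuity into two halves handled by two different mechanisms: left-continuity via the Hodge/duality characterization $f(r) = \sup\{\int g\cdot\nabla^\perp\psi : \psi\in W^{1,2}_0(B(0,r)),\ \|\nabla\psi\|_{L^2}\le 1\}$ together with density of compactly supported test functions, and right-continuity via reflection-extension of the primal minimizer across $\partial B(0,r)$ and absolute continuity of the integral over the shrinking annulus. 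Both are clean. The paper's scaling trick is more compact and completely symmetric in $r\leftrightarrow R$, avoiding both the Hodge-decomposition identification of the orthogonal complement of $\{\nabla\theta\}$ (which, as you correctly flag, is the one step in your proof that requires justification, including closedness of the gradient subspace and the vanishing-normal-trace characterization on the simply connected disk) and the construction of a $W^{1,2}$ reflection extension. Your version has the advantage of making the variational structure (projection onto a closed subspace, primal/dual pair) explicit, which is conceptually illuminating even if slightly longer.
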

\begin{proof}
For the limit at $0$ we have from the minimization property
\[
\limsup_{r \to 0^+} f(r) \leq \limsup_{r \to 0} \|g\|_{2,B(0,r)} = 0,
\]
by absolute continuity of the integral.

Regarding the continuity we first observe that $f$ is monotone. Indeed let $0 < r < R$, and let $\theta_R$ be a minimizer (which always exists) for $f(R)$. Then $\theta_R$ is a competitor for $f(r)$ and thus
\[
f(r) \leq \|\nabla \theta_R + g\|_{L^2(B(0,r)} \leq \|\nabla \theta_R + g\|_{L^2(B(0,R))}=f(R).
\]
Next, set
\[
\tilde{g}(x) := \frac{r}{R} g\left (\frac{r}{R}x \right ).
\]
Denote by $\tilde{\theta}_R$ a minimizer of
\[
\tilde{f}(R) := \|\nabla \tilde{\theta} + \tilde{g}\|_{L^2(B(0,R))}.
\]
We then have
\[
f(r) = \tilde{f}(R).
\]
From this we conclude
\[
\begin{split}
0 \leq& \brac{f(R)}^2 - \brac{f(r)}^2 = \brac{f(R)}^2 - \brac{\tilde{f}(R)}^2\\
\leq&\int_{B(0,R)} |\nabla \tilde{\theta}_R + g|^2-|\nabla \tilde{\theta}_R + \tilde{g}|^2\\
=&\int_{B(0,R)} |g|^2-|\tilde{g}|^2 + 2\nabla \tilde{\theta}_R \cdot (g -\tilde{g})\\
\aleq& \brac{\|g\|_{L^2(B(0,R))} + \|\tilde{g}\|_{L^2(B(0,R))}+ \|\nabla \tilde{\theta}_R\|_{L^2(B(0,R))}}\, \|g-\tilde{g}\|_{L^2(B(0,R))}
\end{split}
\]
Since $\Theta_R$ is a minimizer of $\tilde{f}(R)$ this implies in particular,
\[
0 \leq \brac{f(R)}^2 - \brac{f(r)}^2 \aleq \brac{\|g\|_{L^2(B(0,R))} + \|\tilde{g}\|_{L^2(B(0,R))}}\, \|g-\tilde{g}\|_{L^2(B(0,R))}
\]
Since we have by dominated convergence,
\[
\lim_{r \to R} \|g-\tilde{g}\|_{L^2(B(0,R))} = \lim_{R \to r}\|g-\tilde{g}\|_{L^2(B(0,R))} = 0,
\]
we obtain continuity for $f$.
\end{proof}

\begin{proof}[Proof of Theorem~\ref{th:frlifting}]

Since the situation is scaling invariant, we assume w.l.o.g. $B = B(0,1)$. Moreover, by Proposition~\ref{pr:uwusmalluwsmall}, since $s > \frac{1}{2}$, we may assume w.l.o.g. that instead of \eqref{eq:uwucond} we have
\begin{equation}\label{eq:uwucond2}
[u]_{W^{s,\frac{n}{s}}(B(0,1))} \leq \eps_s.
\end{equation}

For $r \in (0,1]$ we consider the minimization problem
\[
\mathcal{E}_r(\theta) := \int_{B(0,r)} |\langle \tilde{e}_1(\theta), \nabla \tilde{e}_2(\theta) \rangle |^2
\]
where
\[
\tilde{e}_\alpha(\theta) := P_{\alpha \beta}(\theta)\, e_\beta,
\]
and $P \in W^{1,2}(B(0,r)),SO(N))$ is a rotation parametrized by $\theta$, namely
\[
 P(\theta) = \left ( \begin{array}{cc}
                             \cos(\theta) & -\sin(\theta)\\
                             \sin(\theta) & \cos(\theta)
                            \end{array} \right ).
\]
Equivalently we can then write
\[
\mathcal{E}_r(\theta) := \int_{B(0,r)} |\nabla \theta + \langle e_1, \nabla e_2 \rangle|^2.
\]
See the proof of Lemma~\ref{la:thetaconfparam} for the relevant computations.

We minimize $\mathcal{E}_r(\theta)$ in $\theta \in W^{1,2}(B(0,r))$ and $\int_{B(0,r)} \theta = 0$. We call the minimizing frame $\tilde{e}_{\alpha,r} := \tilde{e}_{\alpha}(\theta_r)$. It satisfies the equation
\[
\begin{cases}
\div(\langle e_{1,r}, \nabla e_{2,r} \rangle) = 0 &\quad \mbox{in $B(0,r)$},\\
\langle e_{1,r}, \partial_\nu e_{2,r} \rangle = 0 &\quad \mbox{on $\partial B(0,r)$}.
\end{cases}
\]
By Hodge decomposition we thus find $\lambda_r \in W^{1,2}_0(B(0,r))$ such that 
\[
\begin{cases}
\nabla^\perp \lambda_r = \langle e_{1,r}, \nabla e_{2,r} \rangle &\quad \mbox{in $B(0,r)$},\\
\lambda_r = 0 &\quad \mbox{on $\partial B(0,r)$}.
\end{cases}
\]
Taking the curl on both sides of the equation we find the equation in Proposition~\ref{pr:wenteour}.
\[
\begin{cases}
\lap \lambda_r = \langle \nabla^\perp e_{1,r}, \nabla e_{2,r} \rangle &\quad \mbox{in $B(0,r)$},\\
\lambda_r = 0 &\quad \mbox{on $\partial B(0,r)$}.
\end{cases}
\]
From Proposition~\ref{pr:wenteour} we then find (for suitably small $\eps_s$),
\[
\begin{split}
\|\nabla \lambda_r\|_{L^2(B(0,r))} \leq& C_s\, \brac{\|\langle e_1,\nabla e_{2_r} \rangle\|_{L^2(B(0,r))}^2 + [u]_{W^{s,\frac{2}{s}}(B(0,1))}^2}\\
=& C_s\, \brac{\|\nabla^\perp \lambda_{r}\|_{L^2(B(0,r))}^2 + [u]_{W^{s,\frac{2}{s}}(B(0,1))}^2}.
\end{split}
\]
If we set
\[
f(r) := \|\nabla \lambda_r\|_{L^2(B(0,r))} = \inf_{\theta} \mathcal{E}_r(\theta)
\]
we arrive at
\begin{equation}\label{eq:frpol}
f(r) \leq C \brac{(f(r))^2 + [u]_{W^{s,\frac{2}{s}}(B(0,1))}^2}.
\end{equation}
Observe that for $\eps_s$ small enough the roots of the polynomial \[C x^2 + [u]_{W^{s,\frac{2}{s}}(B(0,1))}^2 - x\] are real numbers,
\[
F_1(r) := \frac{1}{2C} - \sqrt{\frac{1}{4C^2} - \frac{[u]_{W^{s,\frac{2}{s}}(B(0,1))}^2}{C}}, \quad F_2(r) := \frac{1}{2C} + \sqrt{\frac{1}{4C^2} - \frac{[u]_{W^{s,\frac{2}{s}}(B(0,1))}^2}{C}}.
\]
That is, \eqref{eq:frpol} implies
\begin{equation}\label{eq:frp}
\mbox{either}\ f(r) \in [0,F_1(r)] \quad \mbox{or} \quad f(r) \in [F_2(r),\infty) 
\end{equation}
In view of Lemma~\ref{la:contgauge} $f(r)$ is continuous and $\lim_{r \to 0} f(r) = 0$, which implies
\[
f(r) \in [0,F_1(r)] \quad \mbox{for all $r \in (0,1]$}.
\]
In particular, we obtain
\[
\|\nabla \lambda_r\|_{L^2(B(0,r))} =f(r) \aleq [u]_{W^{s,\frac{2}{s}}(B(0,1))}^2 \quad \mbox{for all $r \in (0,1]$}.
\]
Setting $\tilde{e}_\alpha = \tilde{e}_{\alpha,1}$ we have obtained the estimate
\[
 \|\langle \tilde{e}_1,\nabla \tilde{e}_2 \rangle \|_{L^2(B(0,1))}  \aleq [u]_{W^{s,\frac{2}{s}}(B(0,1))}^2.
\]
The estimate \eqref{eq:frliftest} is then a consequence of Proposition~\ref{pr:eest}. 
\end{proof}

\section{Limits of conformal maps, Proof of Theorem~\ref{th:fracms}}\label{s:prooffracms}
hanThe proof of Theorem~\ref{th:fracms} follows by an adaptation of the argument in \cite[Theorem 5.1.1]{Helein-2002}. We mainly need to keep track of the improved estimates from Section~\ref{s:ingredients}.

We begin with some standard observations about conformal maps.
\begin{lemma}\label{la:conflambda1}
Let $\Phi \in C^\infty(B(0,1),\R^3)$ be a conformal map. That is assume that for some orthonormal basis $e_1,e_2$ of the tangent space of $\Phi(B(0,1))$ and some $\lambda \in C^\infty(B(0,1))$ we have
\begin{equation}\label{eq:confphi}
\partial_\alpha \Phi = e^\lambda e_\alpha, \quad \alpha = 1,2 \quad \mbox{in $B(0,1)$}
\end{equation}
Then
\begin{equation}\label{eq:lambdate1e2}
-\nabla^\perp \lambda = \langle e_1, \nabla e_2 \rangle_{\R^3}\\
\end{equation}
\end{lemma}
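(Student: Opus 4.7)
The natural approach is the classical one: exploit the commutativity of mixed partial derivatives of $\Phi$ together with the orthonormality of $(e_1,e_2)$. Since $\Phi$ is smooth, $\partial_1 \partial_2 \Phi = \partial_2 \partial_1 \Phi$. Plugging in \eqref{eq:confphi} and applying the product rule yields
\[
e^{\lambda}\bigl(\partial_2 \lambda\, e_1 + \partial_2 e_1\bigr) = e^{\lambda}\bigl(\partial_1 \lambda\, e_2 + \partial_1 e_2\bigr),
\]
which after cancelling $e^\lambda$ reduces the problem to the algebraic identity
\[
\partial_2\lambda \, e_1 - \partial_1\lambda\, e_2 = \partial_1 e_2 - \partial_2 e_1.
\]

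The key step is then to project this identity onto the orthonormal frame $(e_1,e_2)$. Taking the inner product with $e_1$ and using $\langle e_1,e_1\rangle = 1$, $\langle e_1,e_2\rangle = 0$, and $\langle e_1,\partial_\alpha e_1\rangle = \tfrac{1}{2}\partial_\alpha|e_1|^2 = 0$, I would obtain
\[
\partial_2\lambda = \langle e_1,\partial_1 e_2\rangle.
\]
Symmetrically, taking the inner product with $e_2$ and using $\langle e_2,\partial_\alpha e_2\rangle = 0$ together with the consequence $\langle e_2,\partial_2 e_1\rangle = -\langle e_1,\partial_2 e_2\rangle$ of differentiating $\langle e_1,e_2\rangle \equiv 0$, I would obtain
\[
-\partial_1\lambda = \langle e_1,\partial_2 e_2\rangle.
\]

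Combining these two scalar identities gives
\[
\langle e_1,\nabla e_2\rangle_{\R^3} = \bigl(\langle e_1,\partial_1 e_2\rangle,\, \langle e_1,\partial_2 e_2\rangle\bigr) = (\partial_2\lambda,\, -\partial_1\lambda) = -\nabla^{\perp}\lambda,
\]
which is exactly \eqref{eq:lambdate1e2}. There is no real obstacle here; the only point that demands a moment of care is the bookkeeping of signs in the convention for $\nabla^\perp$ (namely $\nabla^\perp\lambda = (-\partial_2\lambda,\partial_1\lambda)$), and verifying that the identities $\langle e_\alpha,\partial_\beta e_\alpha\rangle = 0$ and $\langle e_1,\partial_\beta e_2\rangle = -\langle e_2,\partial_\beta e_1\rangle$ are being applied with the correct index. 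Since the component $u = e_1 \wedge e_2$ of the frame never enters the computation, the result is purely a consequence of the tangential structure forced by conformality.
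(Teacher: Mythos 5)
Your proof is correct and follows essentially the same route as the paper: equating the mixed partials $\partial_1\partial_2\Phi=\partial_2\partial_1\Phi$ is exactly the paper's step of taking the curl of \eqref{eq:confphi}, and after cancelling $e^\lambda$ both arguments project the resulting identity onto the frame $(e_1,e_2)$ using $\langle e_\alpha,\partial_\beta e_\alpha\rangle=0$. Your version merely writes out the two scalar projections explicitly, which the paper leaves implicit; the sign bookkeeping is consistent with the paper's convention $\nabla^\perp=(-\partial_2,\partial_1)$.
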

\begin{proof}
Taking the curl on both sides of \eqref{eq:confphi} we obtain
\[
\begin{split}
 0 =& \partial_2 (e^\lambda e_1) - \partial_1 (e^\lambda e_2).
\end{split}
 \]
Multiplying by $e^{-\lambda}$ we thus have
\begin{equation}\label{eq:0eq}
 0 = \partial_2 \lambda\, e_1 + \partial_2 e_1 - \partial_1 \lambda\, e_2- \partial_1 e_2 \\
\end{equation}
Observe that
\[
\langle e_\beta, e_\gamma \rangle = \delta_{\beta \gamma}
\]
readily implies
\[
\langle \partial_\alpha e_\beta, e_\gamma \rangle = 0 \quad \mbox{for $\beta = \gamma$}.
\]
Then the claim follows from taking the scalar product in \eqref{eq:0eq} with $e_1$ and $e_{2}$.
\end{proof}

When we change the basis of the tangent space from $(e_1,e_2)$ into a new basis $(\tilde{e}_1,\tilde{e}_2)$ with same orientation then \eqref{eq:lambdate1e2} changes accordingly. Namely we have

\begin{lemma}\label{la:thetaconfparam}
Let $\Phi  \in C^\infty(B(0,1),\R^3)$ be conformal, say as in Lemma~\ref{la:conflambda1}
\[
\partial_\alpha \Phi  = e^\lambda e_\alpha, \quad \alpha = 1,2.
\]
Let $\tilde{e}_1, \tilde{e}_2$ be any other orthonormal basis with the same orientation, i.e. assume that $\tilde{e}_\alpha = P_{\alpha \beta} e_\beta$ for some $P \in SO(2)$. If we represent 
\[
 P = \left ( \begin{array}{cc}
                             \cos(\theta) & -\sin(\theta)\\
                             \sin(\theta) & \cos(\theta)
                            \end{array} \right ).
\]
then we have
\begin{equation}\label{eq:lambdate1e2th}
- \nabla^\perp \lambda = \langle \tilde{e}_1,\nabla \tilde{e}_2 \rangle_{\R^3} + \nabla \theta
\end{equation}
\end{lemma}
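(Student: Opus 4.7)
The plan is to reduce the lemma to the identity established in Lemma~\ref{la:conflambda1} together with an explicit computation of how the one-form $\langle e_1, \nabla e_2\rangle$ transforms under a rotation $P(\theta)$ of the tangent frame. Since Lemma~\ref{la:conflambda1} already yields $-\nabla^\perp\lambda = \langle e_1, \nabla e_2\rangle$, the claim \eqref{eq:lambdate1e2th} is equivalent to the gauge transformation law
\[
\langle \tilde{e}_1, \nabla \tilde{e}_2\rangle \;=\; \langle e_1, \nabla e_2\rangle \;-\; \nabla\theta,
\]
which is the classical formula for the change of an $SO(2)$-connection one-form under a rotation of the frame.

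To derive it, I would expand $\tilde{e}_1 = \cos\theta\, e_1 - \sin\theta\, e_2$ and $\tilde{e}_2 = \sin\theta\, e_1 + \cos\theta\, e_2$ (choosing the convention of $P$ so that the signs match the claim), differentiate $\tilde{e}_2$ by the Leibniz rule, and then pair the result with $\tilde{e}_1$. Three elementary facts reduce the eight resulting terms: orthonormality $\langle e_\alpha, e_\beta\rangle = \delta_{\alpha\beta}$; the identity $\langle e_\alpha, \nabla e_\alpha\rangle = 0$ already recorded at the end of the proof of Lemma~\ref{la:conflambda1}; and the antisymmetry $\langle e_2, \nabla e_1\rangle = -\langle e_1, \nabla e_2\rangle$ obtained by differentiating $\langle e_1, e_2\rangle = 0$. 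After these cancellations the contributions that carry $\nabla e_\beta$ combine through $\cos^2\theta + \sin^2\theta = 1$ into a single copy of $\langle e_1, \nabla e_2\rangle$, while the contributions in which the derivative lands on $\theta$ combine, again via $\cos^2\theta+\sin^2\theta = 1$, into $\pm\nabla\theta$. Substituting this identity into \eqref{eq:lambdate1e2} yields \eqref{eq:lambdate1e2th}.

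There is no conceptual obstacle in this argument; the lemma is a transparent instance of the standard change-of-gauge rule for a principal $SO(2)$-connection on the tangent bundle of the parametrized patch. The only thing to watch is the sign in front of $\nabla\theta$, which is pinned down by the convention for the matrix $P$ (namely whether one interprets $\tilde{e}_\alpha = P_{\alpha\beta} e_\beta$ with $P$ or with its transpose), and the conclusion \eqref{eq:lambdate1e2th} then fixes that convention.
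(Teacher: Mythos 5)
Your proposal is correct and follows essentially the same route as the paper: both reduce the lemma to the $SO(2)$ gauge-transformation law for the connection one-form $\langle e_1,\nabla e_2\rangle$, expand one frame in terms of the other, apply the Leibniz rule, and simplify via orthonormality, $\langle e_\alpha,\nabla e_\alpha\rangle=0$, antisymmetry, and $\cos^2\theta+\sin^2\theta=1$. The only cosmetic difference is the direction of the expansion (you expand $\tilde e_\alpha$ in terms of $e_\beta$, the paper expands $e_\alpha$ in terms of $\tilde e_\beta$ and packages the scalar terms as $P_{1\alpha}\nabla P_{2\alpha}$ via $P\,\nabla P^{T}$), and your remark about the sign in front of $\nabla\theta$ being fixed by the convention for $P$ is on point, since a literal reading of $\tilde e_\alpha = P_{\alpha\beta}e_\beta$ with the displayed $P$ would flip that sign relative to what is printed in both the lemma and the paper's proof.
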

\begin{proof}
Observe that 
\[
 \partial_\sigma \left ( \begin{array}{cc}
                             \cos(\theta) & -\sin(\theta)\\
                             \sin(\theta) & \cos(\theta)
                            \end{array} \right )= 
\left ( \begin{array}{cc}
                             -\sin(\theta) & -\cos(\theta)\\
                             \cos(\theta) & -\sin(\theta)
                            \end{array} \right ) \partial_\sigma \theta
\]
and in particular,
\[
 P \partial_\sigma P^T = \left ( \begin{array}{cc}
                             \cos(\theta) & -\sin(\theta)\\
                             \sin(\theta) & \cos(\theta)
                            \end{array} \right )
                            \partial_\sigma \left ( \begin{array}{cc}
                             \cos(\theta) & \sin(\theta)\\
                             -\sin(\theta) & \cos(\theta)
                            \end{array} \right ) = \left ( \begin{array}{cc}
                             0 & 1\\
                             -1 & 0
                            \end{array} \right )  \partial_\sigma \theta
\]
This implies 
\[
\begin{split}
 &\langle e_1, \nabla e_2 \rangle_{\R^3} \\
 =& P_{1\alpha} \nabla P_{2\beta} \langle \tilde{e}_\alpha,\tilde{e}_\beta\rangle_{\R^3}+ 
 P_{1\alpha} P_{2\beta}\langle \tilde{e}_\alpha , \nabla \tilde{e}_\beta\rangle_{\R^3}\\
 =& P_{1\alpha} \nabla P_{2\alpha} + 
 P_{11} P_{22}\langle \tilde{e}_1, \nabla \tilde{e}_2\rangle_{\R^3} + P_{12} P_{21}\langle \tilde{e}_2 , \nabla \tilde{e}_1\rangle_{\R^3}\\
 =& P_{1\alpha} \nabla P_{2\alpha} + 
 \brac{P_{11} P_{22} - P_{12} P_{21}}\langle \tilde{e}_1 , \nabla \tilde{e}_2\rangle_{\R^3}\\
  =& \nabla \theta + 
 \langle \tilde{e}_1,\nabla \tilde{e}_2 \rangle_{\R^3}\\
\end{split}
 \]
Now we obtain the claim from Lemma~\ref{la:conflambda1}, more precisely from \eqref{eq:lambdate1e2}.
\end{proof}

\begin{lemma}\label{la:lambdahestK}
Let $\Phi  \in C^\infty(\overline{B(0,1)},\R^3)$ be conformal, say
\begin{equation}\label{eq:phiconf1}
\partial_\alpha \Phi  = e^\lambda e_\alpha, \quad \alpha = 1,2
\end{equation}
for some orthonormal basis $e_1,e_2$ of the tangent space of $\Phi (B(0,1))$ and some $\lambda \in C^\infty(\overline{B(0,1)})$.

Let $\tilde{e}_1, \tilde{e}_2$ be any other orthonormal basis with the same orientation, i.e. $\tilde{e}_\alpha = P_{\alpha \beta} e_\beta$, for some $P \in C^\infty(\overline{B(0,1)},SO(2))$.
Let $\lambda^h$ be a solution to
\[
\begin{cases}
\lap \lambda^h = 0 \quad& \mbox{in $B(0,1)$},\\
\lambda^h = \lambda \quad &\mbox{on $\partial B(0,1)$}.
\end{cases}
\]
Then for every $r \in (0,1)$ and any $s \in (\frac{1}{2},1]$ there exist positive constants $C_1(r,s)$ and $C_2(r)$ so that
\[
\sup_{z \in B(0,r)} \lambda^h(z) \leq C_1(r,s) \brac{[\tilde{e}_1]^2_{W^{s,\frac{2}{s}}(B(0,1))}+ [\tilde{e}_2]^2_{W^{s,\frac{2}{s}}(B(0,1))} + \|\nabla \Phi \|_{L^2} } -C_2(r)\int_{B(0,1)} (\lambda^h)_- 
\]
where we use the notation $f_- = |\min \{f,0\}|$.
\end{lemma}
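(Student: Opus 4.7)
The plan is to decompose $\lambda = \lambda^h + \lambda_0$ with $\lambda_0 \in H^1_0(B(0,1))$ solving a Wente-type equation, and then combine the mean-value property of the harmonic part with Jensen's inequality applied to the conformal identity $|\nabla \Phi|^2 = 2\,e^{2\lambda}$.

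First I derive the PDE for $\lambda$. Taking the curl of the identity $-\nabla^\perp \lambda = \langle \tilde{e}_1, \nabla \tilde{e}_2\rangle_{\R^3} + \nabla \theta$ from Lemma~\ref{la:thetaconfparam} kills the gradient term (curl of a gradient vanishes), and using $\langle \tilde{e}_\alpha, \partial_\beta \tilde{e}_\alpha\rangle = 0$ yields
\[
\lap \lambda = -\langle \nabla^\perp \tilde{e}_1, \nabla \tilde{e}_2\rangle_{\R^3} \quad \text{in } B(0,1).
\]
Setting $\lambda_0 := \lambda - \lambda^h \in H^1_0(B(0,1))$, one has $\lap \lambda_0 = \lap \lambda$, and Proposition~\ref{pr:wente1} applied with $f = \tilde{e}_1$, $g = \tilde{e}_2$ gives
\[
\|\nabla \lambda_0\|_{L^2(B(0,1))} \aleq [\tilde{e}_1]_{W^{s,2/s}(B(0,1))}\,[\tilde{e}_2]_{W^{s,2/s}(B(0,1))} \aleq [\tilde{e}_1]_{W^{s,2/s}(B(0,1))}^2 + [\tilde{e}_2]_{W^{s,2/s}(B(0,1))}^2,
\]
and Poincar\'{e} transfers the bound to $\|\lambda_0\|_{L^1(B(0,1))}$.

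Next I bound $\lambda^h$ pointwise on $B(0,r)$. For $z \in B(0,r)$ and $\rho = 1-r$ we have $B(z,\rho) \subset B(0,1)$, and the mean-value property for the harmonic $\lambda^h$ gives
\[
\lambda^h(z) = \frac{1}{|B(z,\rho)|}\int_{B(z,\rho)} \lambda\, dx - \frac{1}{|B(z,\rho)|}\int_{B(z,\rho)} \lambda_0\, dx.
\]
Using $e^{2\lambda} = \tfrac{1}{2}|\nabla \Phi|^2$ and Jensen's inequality for the concave $\log$,
\[
\frac{1}{|B(z,\rho)|}\int_{B(z,\rho)} \lambda \leq \tfrac{1}{2}\log\!\brac{\frac{\|\nabla \Phi\|_{L^2(B(0,1))}^2}{|B(z,\rho)|}} - \tfrac{\log 2}{2} \leq \log \|\nabla \Phi\|_{L^2(B(0,1))} + C(r),
\]
and the elementary $\log y \leq y$ for $y > 0$ converts this into a linear dependence on $\|\nabla\Phi\|_{L^2(B(0,1))}$. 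The $\lambda_0$ average is bounded by $|B(z,\rho)|^{-1}\|\lambda_0\|_{L^1(B(0,1))}\aleq [\tilde{e}_1]_{W^{s,2/s}(B(0,1))}^2 + [\tilde{e}_2]_{W^{s,2/s}(B(0,1))}^2$. Together these already give a bound of the announced shape, up to an additive constant $C(r)$.

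To sharpen to the full statement with the $-C_2(r)\int_{B(0,1)}(\lambda^h)_-$ correction, I upgrade the mean-value step via Poisson's formula on a sub-disk $B(0,\rho_*)$ with $\rho_* \in (\tfrac{1+r}{2},1)$ chosen by a coarea pigeonhole so that the radial $L^1$-traces of $|\nabla \Phi|$ and $|\lambda_0|$ on $\partial B(0,\rho_*)$ are controlled by $(1-r)^{-1/2}\|\nabla \Phi\|_{L^2(B(0,1))}$ and $(1-r)^{-1/2}\|\nabla \lambda_0\|_{L^2(B(0,1))}$ respectively. The two-sided positivity $0 < c(r) \leq 2\pi \rho_* P_{\rho_*}(z,\xi) \leq C(r)$ of the Poisson kernel then gives
\[
\lambda^h(z) \leq C(r) \int_{\partial B(0,\rho_*)}(\lambda^h)_+\, d\sigma - c(r) \int_{\partial B(0,\rho_*)}(\lambda^h)_-\, d\sigma.
\]
The positive-part boundary integral is handled by the coarea choice together with the pointwise estimate $(\lambda^h)_+ \leq e^\lambda + |\lambda_0|$, producing the $C_1(r,s)$-term of the statement. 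The negative-part boundary integral is converted into the interior integral $\int_{B(0,1)}(\lambda^h)_-$ via subharmonicity of $(\lambda^h)_-$ (monotonicity of spherical averages in the radius). The main obstacle will be precisely this last conversion: for a general harmonic $\lambda^h$ the negative part $(\lambda^h)_-$ can concentrate near $\partial B(0,1)$, so one has to average $\rho_*$ over a radial window near $1$ in order for the subharmonic monotonicity to be effective and to yield the $-C_2(r)\int_{B(0,1)}(\lambda^h)_-$ correction with a constant independent of the data.
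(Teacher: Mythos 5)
Your overall strategy coincides with the paper's: decompose $\lambda=\lambda^h+\lambda_0$, get the Wente equation for $\lambda_0$, apply Proposition~\ref{pr:wente1} and Poincar\'e to bound $\|\lambda_0\|_{L^1}$, use the pointwise bound $(\lambda^h)_+\leq e^\lambda+|\lambda_0|$ together with $\int_{B}e^\lambda\aleq\|\nabla\Phi\|_{L^2(B)}$, and feed all of this into a Harnack-type estimate for the harmonic function $\lambda^h$ of the form $\sup_{B_r}\lambda^h\leq C_1\int_{B}(\lambda^h)_+ - C_2\int_{B}(\lambda^h)_-$. In the paper this last step is outsourced to Lemma~\ref{la:harmonic1}, whose proof averages the Poisson representation over a radial window $\rho\in(1-\delta,1)$ to obtain a kernel $G(y,z)$ supported in the annulus with two-sided bounds, and then converts $\int_{\text{annulus}}(\lambda^h)_-G(\cdot,z)$ into $\int_B(\lambda^h)_-$ by passing through $\lambda^h(0)=\mvint_B\lambda^h$.

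Where you differ is only in how that Harnack-type lemma is established, and this is also where your write-up has a small but real gap. First, the opening Jensen/$\log$ argument is superseded by your own second step and produces neither the $-C_2\int(\lambda^h)_-$ correction nor a constant-free bound, so it can simply be dropped. Second, and more importantly, your pigeonhole choice of a \emph{single} $\rho_*$ is incompatible with what you actually need: for a fixed $\rho_*$, subharmonicity of $(\lambda^h)_-$ only gives $\int_{B_{\rho_*}}(\lambda^h)_-\leq C\int_{\partial B_{\rho_*}}(\lambda^h)_-$, which misses the mass of $(\lambda^h)_-$ in the outer annulus $B_1\setminus B_{\rho_*}$, so $\int_{\partial B_{\rho_*}}(\lambda^h)_-$ cannot be bounded below by $c\int_{B_1}(\lambda^h)_-$ for a $\rho_*$ chosen to suit $\Phi$ and $\lambda_0$. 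You correctly identify that one must average the Poisson inequality over $\rho_*\in(1-\delta,1)$ — but once you do that, the coarea pigeonhole is no longer needed at all: averaging the positive-part boundary term directly yields $\frac{1}{\delta}\int_{B_1\setminus B_{1-\delta}}\bigl(e^\lambda+|\lambda_0|\bigr)\aleq\frac{1}{\delta}\bigl(\|\nabla\Phi\|_{L^2}+\|\nabla\lambda_0\|_{L^2}\bigr)$, and averaging the negative-part boundary term gives $\frac{1}{\delta}\int_{B_1\setminus B_{1-\delta}}(\lambda^h)_-$, which by the monotonicity of spherical means of the subharmonic function $(\lambda^h)_-$ is $\geq\int_{B_1}(\lambda^h)_-$. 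So the fix is to delete the pigeonhole, commit to averaging throughout, and apply subharmonicity as you describe; with that change the argument closes and is a valid (slightly different in detail) substitute for the paper's Lemma~\ref{la:harmonic1}.
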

\begin{proof}
In what follows we shall use the notation
\[
\lambda^0 := \lambda-\lambda^h,
\]
which in view of Lemma~\ref{la:thetaconfparam} leads to the decomposition
\[
 - \nabla^\perp \lambda_0 = \langle \tilde{e}_1,\nabla \tilde{e}_2 \rangle_{\R^3} + \nabla \theta  + \nabla^\perp \lambda^h
\]
where $\theta$ is from the representation of $P$,
\[
 P = \left ( \begin{array}{cc}
                             \cos(\theta) & -\sin(\theta)\\
                             \sin(\theta) & \cos(\theta)
                            \end{array} \right ).
\]
Taking the curl on both sides (recall $\lap \lambda^h = 0$) we obtain in particular,
\begin{equation}\label{eq:lambda0eq}
\begin{cases} 
\lap \lambda_0 = \langle \nabla^\perp \tilde{e}_1, \nabla \tilde{e}_2 \rangle \quad &\mbox{in $B(0,1)$,}\\
\lambda_0 = 0 \quad& \mbox{on $\partial B(0,1)$}.
\end{cases}
\end{equation}
We now begin to estimate $\lambda^h$. Since $\lambda$ is harmonic, more precisely by Lemma~\ref{la:harmonic1}, we have
\[
\sup_{B(0,r)} \lambda^h \leq C_1(r) \int_{B(0,1)} \lambda^h_+ - C_2(r) \int_{B(0,1)} \lambda^h_-.
\]
Observe that from $\lambda^h = \lambda^h-\lambda + \lambda$ and $\lambda \leq e^\lambda$ we obtain the estimate
\[
(\lambda^h)_+ \leq |\lambda^h - \lambda| + e^\lambda
\]
and arrive at
\[
\sup_{B(0,r)} \lambda^h \leq C_1(r) \brac{\int_{B(0,1)} |\lambda_0| + \int_{B(0,1)} e^\lambda} - C_2(r) \int_{B(0,1)} \lambda^h_-
\]
Next we observe that from \eqref{eq:phiconf1} we have $|\Phi| = e^\lambda$ which leads to
\[
\int_{B(0,1)} e^{\lambda} \aleq \brac{\int_{B(0,1)} e^{2\lambda}}^{\frac{1}{2}} = \|\nabla \Phi \|_{L^2(B(0,1))}.
\]
That is, we arrive at
\[
\sup_{B(0,r)} \lambda^h \leq C_1(r) \brac{\int_{B(0,1)} |\lambda_0| + \|\nabla \Phi \|_{L^2(B(0,1)}} - C_2(r) \int_{B(0,1)} \lambda^h_-.
\]
By Poincar\`e-inequality, since $\lambda_0$ has trivial Dirichlet boundary data,
\[
\int_{B(0,1)} |\lambda_0| \aleq \|\nabla \lambda_0\|_{L^2(B(0,1))}.
\]
In view of \eqref{eq:lambda0eq}, an application of Wente's inequality in the form of Proposition~\ref{pr:wente1} leads to
\[
\|\nabla \lambda_0\|_{L^2(B(0,1))} \aleq [e_1]_{W^{s,\frac{2}{s}}(B(0,1))}\, [e_2]_{W^{s,\frac{2}{s}}(B(0,1))}.
\]
This establishes the claim.
\end{proof}

Combining the previous lemma, Lemma~\ref{la:lambdahestK}, with the lifting result, Theorem~\ref{th:frlifting}, we obtain

\begin{lemma}\label{la:nablaphiest}
For any $s \in (\frac{1}{2},1]$ there exists $\eps_s > 0$ such that the following holds.

Let $\Phi  \in C^\infty(B(0,1),\R^3)$ be conformal, say
\begin{equation}\label{eq:phiconformal2}
\partial_\alpha \Phi  = e^\lambda e_\alpha, \quad \alpha = 1,2
\end{equation}
holds for some orthonormal basis $e_1,e_2$ of the tangent space of $\Phi (B(0,1))$ and some $\lambda \in C^\infty(\overline{B(0,1)})$.

If for $u := e_1 \wedge e_2$ we have 
\[
\mathcal{W}_{s,\frac{2}{s}}(\Phi,B(0,1)) := \int_{B(0,1)} \int_{B(0,1)} \frac{|u(x) \wedge u(y)|^p}{|x-y|^{n+sp}}\, dx\, dy < \eps_s
\]
then for every $r \in (0,1)$ there exist constants $C_1(r,s), C_2(r,s) > 0$ such that
\[
\|\nabla \Phi  \|_{L^2(B(0,r))} \aleq \exp\brac{-C_1(r,s)\, \int_{B}(\lambda^h)_- + C_2(r,s)\, \brac{\|\nabla \Phi \|_{2,B} + \mathcal{W}_{s,\frac{2}{s}}(\Phi,B(0,1))}}
\]
where we use the notation $f_- = |\min \{f,0\}|$.

Here $\lambda^h$ is the solution of
\[
\begin{cases}
\lap \lambda^h = 0 \quad& \mbox{in $B(0,1)$},\\
\lambda^h = \lambda \quad &\mbox{on $\partial B(0,1)$}.
\end{cases}
\]
\end{lemma}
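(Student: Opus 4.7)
The plan is to combine Theorem~\ref{th:frlifting}, Lemma~\ref{la:lambdahestK}, and a Wente-type control of the inhomogeneous piece $\lambda_0:=\lambda-\lambda^h$ in order to estimate $\int_{B(0,r)}e^{2\lambda}$, which (since $e_\alpha$ is orthonormal and $\partial_\alpha\Phi=e^\lambda e_\alpha$) is comparable to $\|\nabla\Phi\|_{L^2(B(0,r))}^2$. First I would apply Theorem~\ref{th:frlifting} to the unit normal $u=e_1\wedge e_2$. Since $\mathcal{W}_{s,2/s}(\Phi,B(0,1))<\eps_s$, this produces a new orthonormal frame $(\tilde e_1,\tilde e_2,u)$ of $\R^3$ with $\div\langle\tilde e_1,\nabla\tilde e_2\rangle=0$ in $B(0,1)$ and
\[
[\tilde e_1]_{W^{s,2/s}(B)}^{2/s}+[\tilde e_2]_{W^{s,2/s}(B)}^{2/s}+\|\langle\tilde e_1,\nabla\tilde e_2\rangle\|_{L^2(B)}\aleq \mathcal{W}_{s,2/s}(\Phi,B(0,1)).
\]
Because $(e_1,e_2)$ and $(\tilde e_1,\tilde e_2)$ share the same normal $u$, they are two orthonormal frames of the tangent plane with the same orientation (after possibly swapping $\tilde e_1,\tilde e_2$), so Lemma~\ref{la:thetaconfparam} and hence Lemma~\ref{la:lambdahestK} apply with this $\tilde e_\alpha$.

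Next I would plug this frame into Lemma~\ref{la:lambdahestK}. Using the first step together with $\mathcal{W}_{s,2/s}<\eps_s$ (which makes $[\tilde e_\alpha]_{W^{s,2/s}}^2\aleq \mathcal{W}_{s,2/s}^{s}\aleq 1+\mathcal{W}_{s,2/s}$, the additive constant being absorbed into the $r,s$-dependent prefactor) this yields
\[
\sup_{B(0,r)}\lambda^h\leq \tilde C_1(r,s)\bigl(\mathcal{W}_{s,2/s}(\Phi,B(0,1))+\|\nabla\Phi\|_{L^2(B)}\bigr)-C_2(r)\int_B(\lambda^h)_-.
\]

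Finally, writing $\lambda=\lambda^h+\lambda_0$ with $\lambda_0\in W^{1,2}_0(B(0,1))$ solving the Wente-type equation $\Delta\lambda_0=\langle\nabla^\perp\tilde e_1,\nabla\tilde e_2\rangle$ in $B(0,1)$ (this appeared already in the proof of Lemma~\ref{la:lambdahestK}), the pointwise identity $|\nabla\Phi|^2=2e^{2\lambda}$ gives
\[
\|\nabla\Phi\|_{L^2(B(0,r))}^2\aleq e^{2\sup_{B(0,r)}\lambda^h}\int_{B(0,r)}e^{2\lambda_0}.
\]
By Proposition~\ref{pr:wenteour} together with the first step one has $\|\nabla\lambda_0\|_{L^2(B(0,1))}\aleq \mathcal{W}_{s,2/s}^{s}<\eps_s^{s}$; shrinking $\eps_s$ so that this $H^1_0$-seminorm is small, a standard Moser--Trudinger inequality then gives $\int_{B(0,1)}e^{2\lambda_0}\leq C$ uniformly, and combining with the preceding sup-bound and taking square roots produces the asserted estimate. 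The step I expect to be the main obstacle is precisely this exponential-integrability control of $\lambda_0$: in the $s=1$ setting of H\'elein it is immediate from the classical $L^\infty$-Wente estimate (via compensated compactness), while in the present fractional setup it must be deduced from Proposition~\ref{pr:wenteour} combined with a Moser--Trudinger-type inequality, using in an essential way the smallness of $\mathcal{W}_{s,2/s}$.
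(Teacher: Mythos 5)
Your proposal is correct and follows essentially the same route as the paper: invoke Theorem~\ref{th:frlifting} to produce the Coulomb frame $(\tilde e_1,\tilde e_2)$ with $[\tilde e_\alpha]_{W^{s,2/s}}^{2/s}+\|\langle\tilde e_1,\nabla\tilde e_2\rangle\|_{L^2}\aleq\mathcal{W}_{s,2/s}$, feed this into Lemma~\ref{la:lambdahestK} for the $\sup\lambda^h$ bound, and control $\int e^{2|\lambda_0|}$ via Wente plus Moser--Trudinger after showing $\|\nabla\lambda_0\|_{L^2}\leq 1$ for small $\eps_s$. The only cosmetic deviation is that you route the $\lambda_0$ estimate through Proposition~\ref{pr:wenteour} whereas the paper applies Proposition~\ref{pr:wente1} directly with the seminorm bounds from \eqref{eq:frliftest}; the two give the same smallness, and your remark about the orientation of $(\tilde e_1,\tilde e_2)$ is automatically handled because the lifting theorem constructs the new frame via an $SO(2)$ rotation of $(e_1,e_2)$.
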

\begin{proof}
Fix $s \in (\frac{1}{2},1]$. We denote $\lambda_0 := \lambda-\lambda^h$. 

By the conformality of $\Phi$, namely by \eqref{eq:phiconformal2}, we have for any $z \in B(0,r)$
\[
|\nabla \Phi(z) |^2 = e^{2 \lambda(z)} \leq e^{2 \sup_{B(0,r)} \lambda^h}\, e^{2|\lambda_0(z)|}.
\]
Thus
\begin{equation}\label{eq:nphiz}
\|\nabla \Phi\|_{L^2(B(0,r)} \leq e^{2 \sup_{B(0,r)} \lambda^h} \int_{B(0,1)} e^{2|\lambda_0(z)|}
\end{equation}
We analyze the two factors on the right-hand side of \eqref{eq:nphiz}. For the $\lambda^h$-term we argue as follows: from Lemma~\ref{la:lambdahestK} we get 
\begin{equation}\label{eq:supzlambdah}
\sup_{z \in B(0,r)} \lambda^h(z) \leq C(r,s) \brac{-\int_{B(0,1)} (\lambda^h)_- + [\tilde{e}_1]^2_{W^{s,\frac{2}{s}}(B(0,1))}+ [\tilde{e}_2]^2_{W^{s,\frac{2}{s}}(B(0,1))} + \|\nabla \Phi \|_{L^2} }
\end{equation}
where $\tilde{e}_1$, $\tilde{e}_2$ are chosen from Theorem~\ref{th:frlifting}. In view of \eqref{eq:frliftest} estimate \eqref{eq:supzlambdah} becomes
\begin{equation}\label{eq:supzlambdah2}
\sup_{z \in B(0,r)} \lambda^h(z) \leq -C_1(r,s) \int_{B(0,1)} (\lambda^h)_- +  C_2(r,s)\brac{\mathcal{W}_{s,\frac{2}{s}}(\Phi,B(0,1)) + \|\nabla \Phi \|_{L^2} }.
\end{equation}
This gives an estimate for the $\lambda^h$-quantity in \eqref{eq:nphiz}.

Regarding the $\lambda_0$-quantity in \eqref{eq:nphiz} we use that as in Lemma~\ref{la:lambdahestK}, more precisely by \eqref{eq:lambda0eq}, we can apply Wente's inequality, Proposition~\ref{pr:wente1}, to $\lambda_0$. Namely,
\[
\|\nabla \lambda_0 \|_{L^2(B(0,1))} \aleq [\tilde{e}_1]_{W^{s,\frac{2}{s}}(B(0,1)}\, [\tilde{e}_2]_{W^{s,\frac{2}{s}}(B(0,1)},
\]
From \eqref{eq:frliftest} we have in particular
\[
\|\nabla \lambda_0 \|_{L^2(B(0,1))} \leq 1.
\]
Since
\[
e^{2|\lambda_0|} \leq C(\beta)\, e^{\beta |\lambda_0|^2}
\]
we conclude that we can employ Moser-Trudinger inequality \cite{T67,M70,A88}, and have
\begin{equation}\label{eq:lambda0est}
\int_{B(0,1)} e^{2|\lambda_0|} \aleq 1.
\end{equation}
Plugging \eqref{eq:lambda0est} and \eqref{eq:supzlambdah2} into \eqref{eq:nphiz} we conclude.
\end{proof}

Now we are ready to prove our main result.
\begin{proof}[Proof of Theorem~\ref{th:fracms}]
Let $\Phi_k \in C^\infty(\overline{B(0,1)},\R^3)$ be a $W^{1,2}$-weakly converging sequence of conformal immersions with
\[
\sup_{k \in \N} \mathcal{W}_{s,\frac{2}{s}}(\Phi_k;B(0,1)) < \eps_s.
\]
Then we find $\lambda_k \in C^\infty(\overline{B(0,1)})$ and orthonormal basis $(e_{1;k},e_{2;k},u_k)$ such that
\[
\partial_\alpha \Phi_k = e^{\lambda_k} e_{\alpha;k} \quad \alpha = 1,2
\]
We split $\lambda_k = \lambda_k^h + \lambda_{k,0}$ as in Lemma~\ref{la:nablaphiest} and consider two cases:

Firstly, we consider the collapsing case, that is we assume
\[
\sup_{k} \int_{B(0,1)} (\lambda_k^h)_- = \infty.
\]
In this case we obtain from Lemma~\ref{la:nablaphiest},
\[
\liminf_{k \to \infty} \|\nabla \Phi_k\|_{L^2(B(0,r))} = 0.
\]
Weak convergence of $\Phi_k \to \Phi$ in $W^{1,2}$ then implies that $\Phi$ is a constant map.

Assume now this is not the case, that is assume
\[
\sup_{k} \int_{B(0,1)} (\lambda_k^h)_- < \infty.
\]
In that case we get from Lemma~\ref{la:nablaphiest} and Lemma~\ref{la:harmonic1} applied to $-\lambda^h$ 
\[
\sup_{k} \|\lambda_k^h\|_{L^\infty(B(0,r))} < \infty \quad \mbox{for any $r \in (0,1)$}.
\]
In particular. since $\lambda_k$ is harmonic, for any $r \in (0,1)$ we have 
\begin{equation}\label{eq:lambdahw12}
\sup_{k} \|\lambda_k^h\|_{W^{1,2}(B(0,r))} < \infty \quad \mbox{for any $r \in (0,1)$}.
\end{equation}
Moroever, from Theorem~\ref{th:lifting} we have that we have frames $\tilde{e}_{\alpha;k} = P_{\alpha \beta;k}(\theta_k) e_{\beta;k}$
\begin{equation}\label{eq:ewsp}
[\tilde{e}_{1;k}]_{W^{s,\frac{2}{s}}(B(0,1)}^{\frac{2}{s}} + [\tilde{e}_{2;k}]_{W^{s,\frac{2}{s}}(B(0,1)}^{\frac{2}{s}} + \|\langle \tilde{e}_1,\nabla \tilde{e}_2 \rangle\|_{L^2(B(0,1))} \aleq \mathcal{W}_{s,\frac{2}{s}}(\Phi,B(0,1)).
\end{equation}
Also in view of Lemma~\ref{la:thetaconfparam} 
\begin{equation}\label{eq:lambda0kdecomp}
- \nabla^\perp \lambda_{0;k} -   \nabla^\perp\lambda_k^h= \langle \tilde{e}_{1;k},\nabla \tilde{e}_{2;k} \rangle_{\R^3} + \nabla \theta_k .
\end{equation}
which taking the curl implies that $\lambda_{0;k}$ satisfies 
\[
\begin{cases}
\lap \lambda_{0;k} = \langle \nabla^\perp \tilde{e}_{1;k},\nabla \tilde{e}_{2;k} \rangle_{\R^3} \quad &\mbox{in $B(0,1)$}\\
\lambda_{0;k} = 0 \quad &\mbox{on $\partial B(0,1)$}.
\end{cases}
\]
We can apply Wente's theorem, Proposition~\ref{pr:wente1}, to obtain
\begin{equation}\label{eq:lambda0kw12}
\|\nabla \lambda_{0;k} \|_{L^2(B(0,1))} \aleq \brac{\mathcal{W}_{s,\frac{2}{s}}(\Phi,B(0,1))}^{\frac{s}{2}}.
\end{equation}
In view of the decomposition \eqref{eq:lambda0kdecomp}, the estimates \eqref{eq:lambdahw12}, \eqref{eq:ewsp}, and \eqref{eq:lambda0kw12} imply a locally uniform $\dot{W}^{1,2}$-bound on $\theta$, and in particular we get a locally uniform  $W^{1,2}$-bound on $P_k = P(\theta_k)$,
\[
\sup_{k \in \N} \|\nabla P_k\|_{L^2(B(0,r))} < \infty \quad \mbox{for any $r \in (0,1)$}.
\]
This finally leads to the fact that $e_{\alpha;k}  = P^t_{\alpha \beta} \tilde{e}_{\beta}$ is locally bounded
\[
\sup_{k \in \N} \brac{ [e_{1;k}]_{W^{s,\frac{2}{s}}(B(0,r)} + [e_{2;k}]_{W^{s,\frac{2}{s}}(B(0,r))}} < \infty \quad \mbox{for any $r \in (0,1)$}.
\]
In particular up to taking subsequences we find on any $B(0,r)$ an $L^2(B(0,r))$-strongly converging subsequence of $e_{\alpha,k}$ to an orthnormal system $e_{\alpha} \in  W^{s,\frac{2}{s}}(B(0,r),\S^2)$.

By \eqref{eq:lambdahw12} and \eqref{eq:lambda0kw12} we also find that $e^{\lambda_k}$ converges almost everywhere to some $e^{\lambda}$ with 
\[
\|\lambda\|_{W^{1,2}(B(0,r))} < \infty \quad \mbox{for any $r \in (0,1)$}.
\]
Since $|\nabla \Phi_k|^2 = e^{2\lambda_k}$ and $\Phi_k$ is bounded in $W^{1,2}$ we get also weak convergence of $e^{\lambda_k}$ in $L^2$ to $e^{\lambda}$.

In particular, we can pass to the limit in the equation
\[
\partial_\alpha \Phi_k = e^{\lambda_k} \tilde{e}_{\alpha;k} \quad \mbox{in $B(0,r)$}.
\]
This way we find
\[
\partial_\alpha \Phi = e^{\lambda}\, \tilde{e}_{\alpha} \quad \mbox{almost everywhere in B(0,1)}.
\]
Since $\lambda \in L^1_{loc}$ we have in particular that $\Phi$ is almost everywhere an immersion.
\end{proof}

\section{The fractional normal curvature quantity controls the Sobolev norm: Proof of Proposition~\ref{pr:uwusmalluwsmall}}\label{s:proofuwusmall}
We begin with an easy lemma
\begin{lemma}\label{la:uw1}
Let $B \subset \R^2$ be a ball and $u \in C^\infty(\overline{B},\S^2)$. Then we have
\[
[u]_{W^{s,\frac{2}{s}}(B)} \aleq [u]_{W^{s,\frac{2}{s}}(B)}^2 + \brac{\int_{B}\int_{B}\frac{|u(x)\wedge u(y)|^{\frac{2}{s}}}{|x-y|^{4}}\, dx\, dy}^{\frac{s}{2}}
\]
The constants are independent of the specific ball.
\end{lemma}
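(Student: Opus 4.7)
The plan is to combine the unit-length identity on $\S^2$ with a critical fractional Sobolev embedding. Since $u(x),u(y)\in\S^2$, the identity $|a\wedge b|^2=|a|^2|b|^2-(a\cdot b)^2$ together with $|u(x)-u(y)|^2=2-2\,u(x)\cdot u(y)$ produces the exact algebraic identity
\[
|u(x)-u(y)|^2=|u(x)\wedge u(y)|^2+\tfrac{1}{4}|u(x)-u(y)|^4.
\]
Setting $p:=2/s$ and raising to the $(p/2)$-th power, using $(a+b)^{p/2}\aleq a^{p/2}+b^{p/2}$ (valid since $p\geq 2$), I would obtain the pointwise bound
\[
|u(x)-u(y)|^{p}\aleq |u(x)\wedge u(y)|^{p}+|u(x)-u(y)|^{2p}.
\]
Dividing by $|x-y|^{4}$ and integrating over $B\times B$, and observing that $n+\tfrac{s}{2}\cdot\tfrac{4}{s}=4$ so the residual term is precisely a Gagliardo seminorm of order $s/2$ and exponent $4/s$, I arrive at
\[
[u]_{W^{s,2/s}(B)}^{2/s}\aleq \int_B\int_B \frac{|u(x)\wedge u(y)|^{2/s}}{|x-y|^{4}}\,dx\,dy + [u]_{W^{s/2,4/s}(B)}^{4/s}.
\]

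The heart of the argument is then the critical fractional Sobolev embedding
\[
W^{s,2/s}(\R^{2})\hookrightarrow W^{s/2,4/s}(\R^{2}).
\]
Both sides lie on the critical scaling line $\sigma-n/q=0$, and the embedding is a direct consequence of Hardy--Littlewood--Sobolev applied to the Riesz potential $I_{s/2}\colon L^{2/s}\to L^{4/s}$ combined with the Bessel/Littlewood--Paley characterization of $W^{\sigma,q}$. This yields $[v]_{W^{s/2,4/s}(\R^{2})}\aleq [v]_{W^{s,2/s}(\R^{2})}$ for every $v$. I would transfer the estimate to $B$ by taking a Stein extension $\tilde u\in W^{s,2/s}(\R^{2})$ of $u$ satisfying $[\tilde u]_{W^{s,2/s}(\R^{2})}\aleq [u]_{W^{s,2/s}(B)}$ and using the trivial domain monotonicity
\[
\int_B\int_B\frac{|u(x)-u(y)|^{4/s}}{|x-y|^{4}}\,dx\,dy\leq [\tilde u]_{W^{s/2,4/s}(\R^{2})}^{4/s},
\]
so that $[u]_{W^{s/2,4/s}(B)}^{4/s}\aleq [u]_{W^{s,2/s}(B)}^{4/s}$. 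Plugging this back into the previous display and extracting $(s/2)$-th powers (using subadditivity $(a+b)^{s/2}\leq a^{s/2}+b^{s/2}$, valid as $s/2<1$) produces exactly the estimate of Lemma~\ref{la:uw1}.

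\textbf{Where I expect the difficulty.} The delicate point is that the implied constant must be \emph{independent of the specific ball}. This is forced by the critical scaling $sp=n=2$: both $[u]_{W^{s,2/s}}$ and $[u]_{W^{s/2,4/s}}$ are invariant under dilations $u(\cdot)\mapsto u(r\,\cdot)$, so one reduces to the unit ball, applies a fixed extension operator whose constant is absolute, and scales back without loss. A secondary bookkeeping matter is that the Sobolev embedding is naturally phrased for Bessel-potential norms whereas the lemma is stated in terms of the Slobodeckij seminorm; the well-known equivalence of the two on $W^{s,p}(\R^n)$ for $s\in(0,1)$ and $1<p<\infty$ handles this.
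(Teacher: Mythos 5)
Your proposal is correct and is essentially the same as the paper's argument: the pointwise algebraic identity on $\S^2$ splitting $|u(x)-u(y)|$ into $|u(x)\wedge u(y)|$ plus a quadratic remainder, integration to produce $[u]_{W^{s/2,4/s}(B)}^2$ as the residual, and the critical Sobolev embedding $W^{s,2/s}(B)\hookrightarrow W^{s/2,4/s}(B)$ (Proposition~\ref{pr:sob}(4), transferred to the ball by an extension). The only cosmetic difference is that you work with the exact squared identity whereas the paper writes the equivalent first-order inequality $|u(x)-u(y)|\aleq |u(x)\wedge u(y)|+\tfrac12|u(x)-u(y)|^2$ before raising to the power $2/s$.
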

\begin{proof}
Since $|u| = 1$, by Lagrange identity,
\[
\begin{split}
|u(x)-u(y)| \aleq& |u(x) \wedge (u(x)-u(y)| + |u(x) \cdot (u(x)-u(y))|\\
=& |u(x) \wedge u(y)| + \frac{1}{2}|u(x) - u(y)|^2
\end{split}
\]
In the last step we used that $\langle u(x)+u(y), u(x)-u(y) \rangle = 1-1 = 0$. Thus,
\[
[u]_{W^{s,\frac{2}{s}}(B)} \aleq [u]_{W^{\frac{s}{2},\frac{4}{s}}(B)}^2  + \brac{\int_{B}\int_{B}\frac{|u(x)\wedge u(y)|^{\frac{2}{s}}}{|x-y|^{4}}\, dx\, dy}^{\frac{s}{2}}
\]
From Sobolev embedding, Proposition~\ref{pr:sob}, we obtain
\[
[u]_{W^{s,\frac{2}{s}}(B)} \aleq [u]_{W^{s,\frac{2}{s}}(B)}^2 + \brac{\int_{B}\int_{B}\frac{|u(x)\wedge u(y)|^{\frac{2}{s}}}{|x-y|^{4}}\, dx\, dy}^{\frac{s}{2}}.
\]
\end{proof}

\begin{proof}[Proof of Proposition~\ref{pr:uwusmalluwsmall}]
From Lemma~\ref{la:uw1} we get in particular that for $f,g: [0,\infty) \to [0,\infty)$ defined as
\[
f(r) := [u]_{W^{s,\frac{2}{s}}(B(0,r))}
\]
\[
g(r) := \brac{\int_{B(0,r)}\int_{B(0,r)}\frac{|u(x)\wedge u(y)|^{\frac{2}{s}}}{|x-y|^{4}}\, dx\, dy}^{\frac{s}{2}}
\]
we have the estimate
\[
f(r) \leq C\, (f(r))^2 + g(r).
\]
Considering the roots and asymptotics of the $C x^2 + g(r) - x$ we conclude that if
\[
F_1(r) := \frac{1}{2C} - \sqrt{\frac{1}{4C^2} - \frac{g(r)}{C}},\quad F_2(r) := \frac{1}{2C} + \sqrt{\frac{1}{4C^2} - \frac{g(r)}{C}}
\]
are real numbers then 
\begin{equation}\label{eq:fr}
\mbox{either } f(r) \in [0,F_1(r)] \quad \mbox{or} \quad f(r) \in [F_2(r),\infty).
\end{equation}
Now we show the claim for $B(0,1)$, by a scaling and translation argument it then holds for any ball $B$. Assume that $g(1) \leq \eps_0$, then $g(r) \leq \eps_0$ for all $r \in (0,1]$. In particular, if we choose $\eps_0$ small enough we have that $F_1$ and $F_2$ are real, and moreover,
\begin{equation}\label{eq:Frgr}
F_1(r) \aleq g(r) \quad \mbox{for any $r \in (0,1]$}.
\end{equation}
Since on the other hand $u \in W^{s,\frac{2}{s}}(B(0,1))$, we get from the absolute continuity of the integral
\[
\lim_{r \to 0} f(r) = 0.
\]
Since $F_2(r) - F_1(r) > c$ uniformly for any $r \in [0,1]$ we conclude that
\[
f(r) \leq F_1(r) \quad \mbox{for all $r \in [0,1]$}.
\]
In view of \eqref{eq:Frgr} we obtain the claim for the ball $B(0,1)$. 
\end{proof}

\section{Estimates on orthormal systems: Proof of Proposition~\ref{pr:eest}}\label{s:eest}
Proposition~\ref{pr:eest} is essentially a consequence of the following global estimate.
\begin{theorem}\label{th:eestgen}
For $n \geq 2$. For any $s \in (\frac{1}{2},1)$, if $e \perp u$, $|e| = |u| = 1$ almost everywhere, then
\[
[e]_{W^{s,\frac{n}{s}}(\R^n)} \aleq \|\Pi^\perp(u) \nabla e\|_{L^n(\R^n)} + [u]_{W^{s,\frac{n}{s}}(\R^n)} + [e]_{W^{s,\frac{n}{s}}(\R^n)}\, [u]_{W^{s,\frac{n}{s}}(\R^n)}
\]
Here,
\[
\Pi^\perp(u) = I - u \otimes u
\]
is the projection onto $u^\perp$.
\end{theorem}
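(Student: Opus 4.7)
My plan is to reduce the bound to three more manageable pieces by exploiting the pointwise orthogonality $u \cdot e = 0$. From $u(x)\cdot e(x) = 0 = u(y)\cdot e(y)$ one gets the algebraic identity
\[
e(x) - e(y) = \Pi^\perp(u(x))(e(x)-e(y)) + \bigl((u(x)-u(y))\cdot e(y)\bigr)\,u(x).
\]
Inserting this into the Gagliardo seminorm $[e]_{W^{s,n/s}}^{n/s}$ and using $\|e\|_\infty = 1$, the second summand immediately contributes the linear $[u]_{W^{s,n/s}}^{n/s}$-term on the right-hand side of the theorem.

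For the remaining first summand I would apply the fundamental theorem of calculus along the segment $\gamma_t = y + t(x-y)$ and split
\[
\Pi^\perp(u(x))\nabla e(\gamma_t) = \Pi^\perp(u(\gamma_t))\nabla e(\gamma_t) + \bigl(\Pi^\perp(u(x))-\Pi^\perp(u(\gamma_t))\bigr)\nabla e(\gamma_t),
\]
using $|\Pi^\perp(u(x))-\Pi^\perp(u(\gamma_t))| \aleq |u(x)-u(\gamma_t)|$. The contribution of $\Pi^\perp(u(\gamma_t))\nabla e(\gamma_t)$, once integrated against the kernel $|x-y|^{-2n}$ in $(x,y)$, should via a Riesz-potential / fractional Sobolev embedding argument (exploiting the matching scaling $sp = n$) be bounded by $\|\Pi^\perp(u)\nabla e\|_{L^n}^{n/s}$, i.e.\ the main term on the right.

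The remaining cross piece is bilinear in $u$ and $e$ at the critical scaling $sp = n$; the clean way to close it is to recognise it as (or to pass through) a three-term Coifman-McIntosh-Meyer / Da Lio-Rivi\`ere / Schikorra-type commutator
\[
T(u,e) := \laps{s}(u\cdot e) - u\cdot \laps{s}e - e\cdot \laps{s}u,
\]
satisfying $\|T(u,e)\|_{L^{n/s}} \aleq [u]_{W^{s,n/s}}\, [e]_{W^{s,n/s}}$. Since $u\cdot e \equiv 0$ kills the first term of $T$, this commutator identity produces exactly the bilinear error $[e]_{W^{s,n/s}}[u]_{W^{s,n/s}}$ on the right-hand side of the theorem.

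The main obstacle is the critical-scaling commutator step. In the sub-critical regime this would be a routine Kato-Ponce application; at $sp = n$ the matter is genuinely delicate, and this is where the assumption $s > \tfrac{1}{2}$ enters, so that the paraproducts and Besov/Triebel embeddings used to prove the three-term commutator bound close with the correct exponents. A secondary (but more bookkeeping-level) difficulty is converting between the Gagliardo seminorm $[\,\cdot\,]_{W^{s,n/s}}$ appearing in the statement and the homogeneous $\laps{s}$-norm in which the commutator machinery is most naturally phrased; this is standard on $\R^n$ but needs to be tracked carefully so as not to spoil the critical scaling.
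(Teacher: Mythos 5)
Your skeleton matches the paper's: split $e(x)-e(y)$ into its $u(x)$-component (which, by $u\cdot e=0$, is bounded by $2|u(x)-u(y)|$ and yields the linear $[u]_{W^{s,\frac{n}{s}}}$-term) and its $\Pi^\perp(u(x))$-component, then represent the latter through $\nabla e$ so the projection can be moved onto the integration point, producing the main term $\|\Pi^\perp(u)\nabla e\|_{L^n}$ plus a bilinear commutator error. The genuine gap is in the step you call ``bookkeeping-level'': passing between $[\,\cdot\,]_{W^{s,\frac{n}{s}}}$ and $\|\laps{s}\cdot\|_{L^{\frac{n}{s}}}$. For $p=\frac{n}{s}>2$ one has $W^{s,p}=F^s_{p,p}\supsetneq F^s_{p,2}$, so $[u]_{W^{s,\frac{n}{s}}}\aleq\|\laps{s}u\|_{L^{\frac{n}{s}}}$ holds but the reverse fails (see Proposition~\ref{pr:sob}(1) and the footnote to item (2)). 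Consequently an estimate whose right-hand side is $\|\laps{s}u\|_{L^{\frac{n}{s}}}\,\|\laps{s}e\|_{L^{\frac{n}{s}}}$ --- which is what the three-term commutator $T(u,e)=\laps{s}(u\cdot e)-u\cdot\laps{s}e-e\cdot\laps{s}u$ naturally delivers, and what Proposition~\ref{pr:evsuest:old} in the paper actually proves --- cannot be converted into the asserted bilinear term $[u]_{W^{s,\frac{n}{s}}}\,[e]_{W^{s,\frac{n}{s}}}$. This is exactly why the paper proves the commutator bound directly in difference form: the error term there is $\int\bigl(\tfrac{x-z}{|x-z|^n}-\tfrac{y-z}{|y-z|^n}\bigr)\bigl(u^\perp(x)+u^\perp(y)-2u^\perp(z)\bigr)\nabla e(z)\,dz$, not $T(u,e)$, and Proposition~\ref{pr:bigcommie} is engineered so that only strictly lower-order quantities $\|\laps{t_2}u\|_{L^{\frac{n}{t_2}}}$ with $t_2<s$ appear on the right, since those \emph{are} dominated by $[u]_{W^{s,\frac{n}{s}}}$ via the strict-inequality Sobolev embedding. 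As written, your plan either proves only the weaker $\laps{s}$-version (Proposition~\ref{pr:evsuest:old}) or leaves the hardest estimate unaddressed.

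A secondary technical point: representing $e(x)-e(y)$ by the fundamental theorem of calculus along the segment $\gamma_t$ forces you to control one-dimensional segment averages of the $L^n$-function $\Pi^\perp(u)\nabla e$, and in dimension $n\ge 2$ these are not dominated by the Hardy--Littlewood maximal function at the endpoints. The paper avoids this by using the full $n$-dimensional potential representation $e=\Delta^{-1}\partial_\alpha\partial_\alpha e$ (this is the reason for the remark ``here we use $n\geq 2$''), for which the required embedding of the main term into $\|\Pi^\perp(u)\nabla e\|_{L^n}$ is Proposition~\ref{pr:sob1}. Your splitting of $\Pi^\perp(u(x))-\Pi^\perp(u(\gamma_t))$ is the right idea, but it should be performed on the kernel representation rather than along segments.
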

\begin{proof}[Proof of Theorem~\ref{th:eestgen}]
We will write $u^\perp(x) := \Pi^\perp u(x)$.
Since $|u| = 1$,
\[
\begin{split}
|e(x)-e(y)| \aleq& |u(x) \cdot (e(x)-e(y))| + |\Pi^\perp(u(x)) (e(x)-e(y))|\\
\leq & |u(x) \cdot e(y)| + |u(x) \wedge (e(x)-e(y))| \\
\leq & 2\|e_1\|_{\infty}\, |u(x) -u(y)| + \frac{1}{2} |\brac{u^\perp(x)+u^\perp(x)} (e(x)-e(y))|\\ 
\end{split}
\]

\[
[e]_{W^{s,\frac{n}{s}}(\R^n)} \aleq [u]_{W^{s,\frac{n}{s}}(\R^n)} + \brac{\int_{\R^n} \int_{\R^n} \frac{|\brac{u^\perp(x)+u^\perp(y)}(e(x)-e(y))|^\frac{n}{s}}{|x-y|^{2n}}\, dx\, dy}^{\frac{s}{n}}.
\]
Since $e = \lap^{-1} \partial_\alpha \partial_\alpha e$ we can write
\[
e(x) - e(y) = \int_{\R^n} \brac{\frac{(x-z)^\alpha}{|x-z|^n} - \frac{(y-z)^\alpha}{|y-z|^n}}\, \partial_\alpha e(z)\, dz. 
\]
Here we use (mainly for technical reasons) $n \geq 2$.

Consequently
\[
\begin{split}
&|(u^\perp(x) + u^\perp(y)) (e_1(x)-e_1(y))|\\
 \leq & 2\left |\int_{\R^n} \brac{\frac{(x-z)^\alpha}{|x-z|^n} - \frac{(y-z)^\alpha}{|y-z|^n}}\, u^\perp(z) \partial_\alpha e(z)\, dz \right|\\
&+ \left |\int_{\R^n} \brac{\frac{(x-z)^\alpha}{|x-z|^n} - \frac{(y-z)^\alpha}{|y-z|^n}}\, (u^\perp(x) + u^\perp(y)-2u^\perp(z)) \partial_\alpha e_1(z)\, dz \right|
\end{split}
\]
Now we get from a Sobolev-type embedding, namely Proposition~\ref{pr:sob1},
\[
\begin{split}
&\brac{\int_{\R^n} \int_{\R^n} \left |\int_{\R^n} \brac{\frac{(x-z)^\alpha}{|x-z|^n} - \frac{(y-z)^\alpha}{|y-z|^n}}\, u^\perp(z) \partial_\alpha e(z)\, dz \right|^\frac{n}{s} \frac{ dx\, dy}{|x-y|^{2n}}}^{\frac{s}{n}}\\
 \aleq &\|u^\perp \nabla e\|_{L^n(\R^n)}.
\end{split}
\]
Moreover, from commutator-type estimates,namely by Proposition~\ref{pr:bigcommie}, we have
\[
\begin{split}
&\brac{\int_{\R^n} \int_{\R^n} \left |\int_{\R^n} \brac{\frac{(x-z)^\alpha}{|x-z|^n} - \frac{(y-z)^\alpha}{|y-z|^n}}\, (u^\perp(x) + u^\perp(y)-2u^\perp(z)) \partial_\alpha e_1(z)\, dz \right|}^{\frac{s}{n}}\\
 \aleq& [u]_{W^{s,\frac{n}{s}(\R^n)}}\, [e]_{W^{s,\frac{n}{s}}(\R^n)}.
\end{split}
\]
We thus conclude.
\end{proof}

We also have the analogue of this statement for the fractional Laplacian which we record here. This is much simpler to prove than Theorem~\ref{th:eestgen}.
\begin{proposition}\label{pr:evsuest:old}
For $n \geq 2$.
For any $s \in [\frac{1}{2},1)$, if $e \perp u$, $|e| = |u| = 1$ almost everywhere, then
\[
\|\laps{s} e\|_{L^{\frac{n}{s}}(\R^n)} \aleq \|\Pi^\perp(u) \nabla e\|_{L^n(\R^n)} + \|\laps{s} u\|_{L^{\frac{n}{s}}(\R^n)} \|\laps{s} e\|_{L^{\frac{n}{s}}(\R^n)} + \|\laps{s} u\|_{L^{\frac{n}{s}}(\R^n)}.
\]
Here,
\[
\Pi^\perp(u) = I - u \otimes u
\]
is the projection onto $u^\perp$.
\end{proposition}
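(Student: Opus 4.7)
The strategy is to exploit the pointwise orthogonality $u \cdot e = 0$ through a fractional Leibniz identity and then split $\laps{s} e$ along $u$ and its orthogonal complement. Introducing the bilinear defect
\[
B_s(f,g)(x) := \int_{\R^n} \frac{(f(x)-f(y)) \cdot (g(x)-g(y))}{|x-y|^{n+s}}\, dy,
\]
a direct expansion yields the Leibniz rule $\laps{s}(f\cdot g) = f\cdot \laps{s} g + g\cdot \laps{s} f - c_{n,s}\, B_s(f,g)$. Applied to $u\cdot e \equiv 0$ this gives $u\cdot \laps{s} e = -e\cdot \laps{s} u + c_{n,s} B_s(u,e)$, and combining with the decomposition $\laps{s} e = \Pi^\perp(u)\laps{s} e + (u\cdot \laps{s} e)\, u$ and $|u|=|e|=1$ reduces the problem to bounding $\|\Pi^\perp(u)\laps{s} e\|_{L^{n/s}(\R^n)}$ and $\|B_s(u,e)\|_{L^{n/s}(\R^n)}$.

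The $B_s$ term is the easy part. Cauchy--Schwarz inside the defining integral yields the pointwise bound $|B_s(u,e)(x)| \leq \mathcal{S}_{s/2} u(x)\cdot \mathcal{S}_{s/2} e(x)$, where $\mathcal{S}_\sigma f(x) := \bigl(\int_{\R^n}|x-y|^{-n-2\sigma}|f(x)-f(y)|^2\,dy\bigr)^{1/2}$ denotes the Strichartz square function. Combining H\"older with Strichartz' $L^p$-equivalence $\|\mathcal{S}_\sigma f\|_{L^p}\aeq \|\laps{\sigma} f\|_{L^p}$ and the Sobolev embedding $\|\laps{s/2} f\|_{L^{2n/s}} \aleq \|\laps{s} f\|_{L^{n/s}}$ (matching Sobolev scales) produces $\|B_s(u,e)\|_{L^{n/s}} \aleq \|\laps{s} u\|_{L^{n/s}} \|\laps{s} e\|_{L^{n/s}}$, which accounts for the mixed term in the claim.

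For $\Pi^\perp(u)\laps{s} e$, use the Riesz representation $\laps{s} e = -\sum_\alpha \mathcal{R}_\alpha \laps{s-1}\partial_\alpha e$, valid for $n\geq 2$ and $s\in(0,1)$. Writing $\Pi^\perp(u(x)) = \Pi^\perp(u(y)) + (\Pi^\perp(u(x))-\Pi^\perp(u(y)))$ in the integrand splits $\Pi^\perp(u)\laps{s} e$ into a Riesz-potential piece $-\sum_\alpha \mathcal{R}_\alpha \laps{s-1}(\Pi^\perp(u)\partial_\alpha e)$ and a commutator piece $-\sum_\alpha [\Pi^\perp(u),\mathcal{R}_\alpha \laps{s-1}]\partial_\alpha e$. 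The Riesz-potential piece is bounded in $L^{n/s}$ by $\|\Pi^\perp(u)\nabla e\|_{L^n}$, thanks to $L^{n/s}$-boundedness of the Riesz transforms together with the Hardy--Littlewood--Sobolev mapping $\laps{s-1}:L^n\to L^{n/s}$ (the scalings match because $\frac{1}{n}-\frac{1-s}{n}=\frac{s}{n}$).

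The commutator piece is the main obstacle and is handled by a Schikorra-style three-term commutator estimate, in the spirit of Proposition~\ref{pr:bigcommie}:
\[
\big\|[\Pi^\perp(u),\mathcal{R}_\alpha \laps{s-1}] f\big\|_{L^{n/s}(\R^n)} \aleq \|\laps{s} u\|_{L^{n/s}(\R^n)}\,\|\laps{s-1} f\|_{L^{n/s}(\R^n)}.
\]
Applied with $f=\partial_\alpha e$ and using the Riesz identity $\|\laps{s-1}\partial_\alpha e\|_{L^{n/s}} = \|\mathcal{R}_\alpha \laps{s} e\|_{L^{n/s}} \aleq \|\laps{s} e\|_{L^{n/s}}$, the commutator bound is absorbed into the mixed term of the claim, finishing the argument. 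The reason this fractional-Laplacian version is simpler than the Gagliardo-seminorm analogue (Theorem~\ref{th:eestgen}) is that on the Fourier side one may cleanly exploit the Riesz representation for $\laps{s}$; for the Gagliardo version one cannot pass through $\mathcal{R}_\alpha \laps{s-1}$ directly and must instead invoke the heavier triple-integral decomposition underlying Proposition~\ref{pr:bigcommie}.
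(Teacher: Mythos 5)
Your proof follows the same skeleton as the paper's: decompose the fractional gradient of $e$ along and orthogonal to $u$, use the pointwise relation $u\cdot e\equiv 0$ plus a fractional Leibniz rule to control the parallel part, and use a commutator with a smoothing operator to expose $\Pi^\perp(u)\nabla e$ in the orthogonal part. Your treatment of the bilinear defect $B_s(u,e)$ via Cauchy--Schwarz and Strichartz square functions is a correct, explicit version of what the paper delegates to the fractional Leibniz rule of \cite{LS18}. The place where your route diverges is the representation: you keep $\laps{s}$ and write $\laps{s}=\sum_\alpha \mathcal{R}_\alpha\laps{s-1}\partial_\alpha$, so the commutator you need is $[\Pi^\perp(u),\mathcal{R}_\alpha\laps{s-1}]$, which decomposes into a Coifman--Rochberg--Weiss piece $[\Pi^\perp(u),\mathcal{R}_\alpha]\lapms{1-s}$ (requiring $\dot W^{s,n/s}\hookrightarrow BMO$) plus $\mathcal{R}_\alpha[\Pi^\perp(u),\lapms{1-s}]$. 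The paper instead switches at the outset from $\laps{s}e$ to $\nabla^s e := \mathcal{R}\laps{s}e = \lapms{1-s}\nabla e$, which has equivalent $L^{n/s}$-norm; then the commutator is simply $[u^\perp,\lapms{1-s}]$ with no Riesz transform, and it is dispatched by a short duality computation (pair with $\varphi\in L^{n/(n-s)}$, move $\lapms{1-s}$ onto $\varphi$, and estimate the resulting Leibniz correction $\laps{1-s}(u\lapms{1-s}\varphi)-\varphi u$ by Sobolev embedding, using $s\geq\tfrac12$).

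Two points deserve flagging. First, your key commutator bound is asserted, not proved; it is correct, but its proof needs either the duality-plus-Leibniz device the paper uses or the CRW/Chanillo machinery --- none of which you supply. Second, you characterize that bound as ``a Schikorra-style three-term commutator estimate, in the spirit of Proposition~\ref{pr:bigcommie},'' which inverts the paper's own point: the paper introduces Proposition~\ref{pr:evsuest:old} precisely as the \emph{simpler} statement whose proof avoids the triple-integral machinery of Proposition~\ref{pr:bigcommie}, the latter being reserved for the harder Gagliardo-seminorm Theorem~\ref{th:eestgen}. So the approach is sound and close to the paper's, but you should not suggest that \ref{pr:bigcommie} (or something of comparable weight) is needed here; a two-line duality argument suffices, and the paper's $\nabla^s=\lapms{1-s}\nabla$ normalization makes that argument cleanest.
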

\begin{proof}[Proof of Proposition~\ref{pr:evsuest:old}]
Denote by $\nabla^s := \Rz\laps{s}$, where $\Rz$ is the vectorial Riesz transform. We then have by boundedness of the Riesz transform on $L^p$-spaces,
\[
\|\laps{s} e \|_{L^{\frac{n}{s}}(\R^n)} \aeq \|\nabla^s e \|_{L^{\frac{n}{s}}(\R^n)}.
\]
In particular the claim is obvious if $s =1$.

From now on assume let $s \in [\frac{1}{2}, 1)$. We denote $u^\perp := \Pi^\perp(u)$, and have
\[
|\nabla^s e|\aleq |u^\perp \nabla^s e| + |u \cdot \nabla^s e|.
\]
Using the fractional Leibniz rule, see e.g. \cite{LS18}, we have
\[
\|u \cdot \nabla^s e\|_{L^{\frac{n}{s}}} \aleq 
\|e\|_{\infty}\, \|\laps{s} u\|_{L^{\frac{n}{s}}} + \|\laps{s} e\|_{L^{\frac{n}{s}}}\, \|\laps{s} u\|_{L^{\frac{n}{s}}}.
\]
For the remaining term we use a commutator,
\[
u^\perp \nabla^s e = u^\perp \lapms{1-s} \nabla e = [u^\perp, \lapms{1-s}](\nabla e) + \lapms{1-s} (u^\perp \nabla e).
\]
Since $s < 1$ we get from Sobolev embedding, Proposition~\ref{pr:sob},
\[
\|\lapms{1-s} (u^\perp \nabla e)\|_{L^{\frac{n}{s}}} \aleq \|\langle e,\nabla e_2 \rangle\|_{2}
\]
It remains to estimate the commutator, and for some $\varphi \in C_c^\infty(\R^n)$ with $\|\varphi \|_{L^{\frac{n}{n-s}}(\R^n)} \leq 1$ we have
\[
\begin{split}
&\|[u^\perp, \lapms{1-s}](\nabla e)\|_{L^{\frac{n}{s}}}\\
 \aleq &\left |\int_{\R^n}  \brac{\laps{1-s} (u\, \lapms{1-s} \varphi)-\varphi u}\wedge \nabla^s e \right |\\
  \aleq &\|\brac{\laps{1-s} (u \lapms{1-s} \varphi)-\varphi u}\|_{L^{\frac{n}{n-s}}}\, \|\laps{s} e \|_{L^{\frac{n}{s}}}.
\end{split}
\]
Again by the fractional Leibniz rule, since $s \in [\frac{1}{2},1)$,
\[
\|\brac{\laps{1-s} (u \lapms{1-s} \varphi)-\varphi u}\|_{L^{\frac{n}{n-s}}}
\aleq \|\laps{1-s} u\|_{\frac{n}{1-s}}\, \|\lapms{1-s} \varphi\|_{\frac{n}{n-1}} \aleq \|\laps{s} u\|_{L^{\frac{n}{s}}}.
\]
This proves the claim. 
\end{proof}

\begin{proof}[Proof of Proposition~\ref{pr:eest}]
Clearly the statement is invariant under scaling and translation so we may assume that $B = B(0,1)$.

We also may assume that $e_1,e_2,u$ are extended to all of $\R^2$ as in Lemma~\ref{la:ext:1}. Observe that this conserves the property that $(e_1,e_2,u)$ form an orthonormal basis of $\R^3$ almost everywhere.

Observe that 
\[
|\Pi^\perp(u) \nabla e_i| = |\langle e_1, \nabla e_2\rangle |, \quad i =1,2.
\]
From Theorem~\ref{th:eestgen} and the estimates in Lemma~\ref{la:ext:1} we have
\[
[e_1]_{W^{s,\frac{2}{s}}(B)} +[e_2]_{W^{s,\frac{2}{s}}(\R^2)} \aleq \|\langle e_1, \nabla e_2 \rangle\|_{L^2(B)} + [u]_{W^{s,\frac{n}{s}}(B)} + [e_1]_{W^{s,\frac{n}{s}}(B)}\, [u]_{W^{s,\frac{n}{s}}(B)}
\]
The remaining claims follow easily.
\end{proof}

\section{A Wente-type estimate: Proof of Proposition~\ref{pr:wente1} and Proposition~\ref{pr:wenteour}}\label{s:proofwenteour}
The analysis in this work is based essentially on a sharp estimates for Wente-Lemma type equations. The usual Wente-Lemma has been used for a long time in geometric analysis \cite{Reshetnyak-1968,Wente69,BrC84,Tartar84,Mueller90,CLMS,T97}. 
It essentially is an estimate on solutions $\lambda_0$ to
\[
\lap \lambda_0 = \nabla^\perp a \cdot \nabla b \quad\mbox{in $B \subset \R^2$},
\]
where $\nabla^\perp = (-\partial_y,\partial_x)$. Up to boundary conditions one can control the $W^{1,2}$-norm of $\lambda_0$ in terms of the $L^2$-norm of $\nabla a$ and $\nabla b$. For a precise formulation see, e.g. \cite[Lemma A.1]{R07}.

This estimate is however not optimal in the sense that a control in $\lapv a$ and $\lapv b$ in $L^4$ suffices for the same $W^{1,2}$-estimate (a $L^{(4,2)}$-Lorentz space estimate is needed to get an $L^\infty$-control on $\lambda_0$).

Proposition~\ref{pr:wente} is a direct corollary of the following statement. We could not find it in the literature, so it might be interesting in its own right.
\begin{proposition}\label{pr:wente}
Let $B \subset \R^2$ be a ball. Let $a,b \in W^{1,2}(\B^2)$ and assume that $\lambda$ solves
\[
\begin{cases}
\lap \lambda = \langle \nabla^\perp a,\nabla b \rangle \quad& \mbox{in $B$}\\
\lambda = 0 \quad &\mbox{on $\partial B$}.
\end{cases}
\]
then we have for any extension $\tilde{a}, \tilde{b}$ such that $\tilde{a}-a$ and $\tilde{b}-b$ are constant in $B$
\begin{equation}\label{eq:wmain}
\|\nabla \lambda\|_{L^{p}(B)} \aleq \|\laps{s} \tilde{a}\|_{L^{p_1}(\R^2)}\, \|\laps{1-s} \tilde{b}\|_{L^{p_2}(\R^2)},
\end{equation}
whenever $s \in (0,1)$ and $1 < p,p_1,p_2 < \infty$ are such that 
\[
\frac{1}{p} = \frac{1}{p_1} + \frac{1}{p_2}.
\]
In particular, we get as a special case for any $s \in (\frac{1}{2},1)$,
\begin{equation}\label{eq:wwsp}
\|\nabla \lambda\|_{L^{2}(B)} \aleq [a]_{W^{s,\frac{2}{s}}(B)}\, [b]_{W^{s,\frac{2}{s}}(B)}.
\end{equation}
\end{proposition}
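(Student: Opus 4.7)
The plan is to first absorb the boundary data into a global problem on $\R^2$ and then interpolate between two elementary endpoint bilinear estimates. Given any extensions $\tilde a, \tilde b$ as in the statement, I introduce $\tilde \lambda$ defined globally by the Riesz representation: using that $\dv(\nabla^\perp \tilde a) = 0$ one has $\nabla^\perp \tilde a \cdot \nabla \tilde b = \dv(\tilde b\, \nabla^\perp \tilde a)$, so
\[
\nabla \tilde\lambda := -R \otimes R\,(\tilde b\, \nabla^\perp \tilde a)
\]
is a bounded Calder\'on-Zygmund operator applied to the product $\tilde b\, \nabla^\perp \tilde a$. Inside $B$, $\tilde\lambda$ and $\lambda$ solve the same Poisson equation, so $\tilde\lambda - \lambda$ is harmonic in $B$ with boundary trace $\tilde\lambda|_{\partial B}$; the standard $L^p$ estimate for the harmonic corrector gives $\|\nabla(\tilde\lambda - \lambda)\|_{L^p(B)} \aleq \|\nabla \tilde\lambda\|_{L^p(B)}$. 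It therefore suffices to prove \eqref{eq:wmain} with $\lambda$ replaced by $\tilde\lambda$ on $\R^2$.

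Two endpoint bounds are then immediate from boundedness of Riesz transforms on $L^p(\R^2)$ and H\"older. From the representation above,
\[
\|\nabla \tilde\lambda\|_{L^p(\R^2)} \aleq \|\tilde b\|_{L^{p_2}}\,\|\nabla \tilde a\|_{L^{p_1}},
\]
and by symmetry, using $\nabla^\perp \tilde a \cdot \nabla \tilde b = -\dv(\tilde a\, \nabla^\perp \tilde b)$,
\[
\|\nabla \tilde\lambda\|_{L^p(\R^2)} \aleq \|\tilde a\|_{L^{p_1}}\,\|\nabla \tilde b\|_{L^{p_2}}.
\]
These are exactly the $s=1$ and $s=0$ cases of \eqref{eq:wmain}. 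Interpreting $T(a,b) := \nabla \tilde\lambda$ as a bilinear map, the two bounds read
\[
T \colon \dot W^{1,p_1}(\R^2) \times L^{p_2}(\R^2) \to L^p(\R^2), \qquad T \colon L^{p_1}(\R^2) \times \dot W^{1,p_2}(\R^2) \to L^p(\R^2).
\]

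Bilinear complex interpolation (Calder\'on's theorem) with parameter $\theta = 1-s$ then produces
\[
T\colon [\dot W^{1,p_1}, L^{p_1}]_{1-s} \times [L^{p_2}, \dot W^{1,p_2}]_{1-s} \to L^p(\R^2).
\]
For $1 < p_1, p_2 < \infty$ the complex interpolation spaces on the left coincide with the Bessel-potential spaces $\dot W^{s,p_1}$ and $\dot W^{1-s,p_2}$, whose norms are precisely $\|\laps{s}\cdot\|_{L^{p_1}}$ and $\|\laps{1-s}\cdot\|_{L^{p_2}}$; this yields \eqref{eq:wmain}. The ``in particular'' statement \eqref{eq:wwsp} then follows by choosing $p_1 = 2/s$, $p_2 = 2/(1-s)$, $p = 2$, and applying the Sobolev embedding $W^{s,2/s}(B) \hookrightarrow W^{1-s, 2/(1-s)}(B)$, which is valid precisely because $s > 1/2$ makes the scaling exponents match.

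The main obstacle is the clean identification between the Gagliardo seminorms $[\,\cdot\,]_{W^{s,p}}$ used everywhere else in the paper and the Bessel-potential spaces that arise in complex interpolation; in the range of exponents here these spaces agree, but the identification goes through Triebel-Lizorkin theory and must be invoked carefully. A proof entirely in the spirit of the rest of the paper would instead exploit the $2$D null-form identity $\sum_i R_i R_i^\perp = 0$, which gives $\laps{1}\tilde\lambda = -[R_i, \tilde b]\,\nabla^\perp_i \tilde a$, and then bound this commutator in $L^p$ by a fractional Coifman-Rochberg-Weiss estimate that splits one derivative between $\tilde a$ and $\tilde b$. In either approach, it is the antisymmetric (null) structure of the Jacobian $\nabla^\perp a \cdot \nabla b$ that produces the extra derivative gain beyond the trivial H\"older bound.
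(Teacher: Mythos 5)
Your proof of the main estimate \eqref{eq:wmain} takes a genuinely different route from the paper. The paper reduces by duality to testing against $\varphi$, passes to the harmonic extensions of $\tilde a$, $\tilde b$, $\varphi$ on $\R^3_+$, integrates by parts using the div-curl structure there, and then identifies the resulting square-function quantities as trace-space norms, following Lenzmann--Schikorra. You instead replace $\lambda$ by the global solution $\tilde\lambda$ (controlling the harmonic corrector in $B$, which is fine modulo subtracting the mean of $\tilde\lambda$), observe that the two endpoints $s=0$ and $s=1$ of \eqref{eq:wmain} are just Calder\'on--Zygmund plus H\"older applied to the two divergence-form representations of the Jacobian, and then invoke Calder\'on's bilinear complex interpolation to fill in $s\in(0,1)$. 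This is a cleaner ``soft'' argument, at the cost of having to invoke the identification of the complex interpolation space $[\dot W^{1,p},L^p]_{1-s}$ with the homogeneous Bessel-potential space $\dot W^{s,p}$, which---as you note---is a nontrivial Triebel--Lizorkin fact and should be cited precisely. That said, the bilinear structure and the null-form identity $\nabla^\perp a\cdot\nabla b = \dv(b\,\nabla^\perp a) = -\dv(a\,\nabla^\perp b)$ are used in both proofs, so the core mechanism (trading one derivative between $a$ and $b$ via the antisymmetry) is the same.

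However, your deduction of \eqref{eq:wwsp} from \eqref{eq:wmain} has a genuine gap. You apply \eqref{eq:wmain} with the \emph{same} $s$ as in the target, i.e.\ with $p_1 = 2/s$, $p_2 = 2/(1-s)$, obtaining $\|\nabla\lambda\|_{L^2}\aleq \|\laps{s}\tilde a\|_{L^{2/s}}\,\|\laps{1-s}\tilde b\|_{L^{2/(1-s)}}$. To reach $[a]_{W^{s,2/s}(B)}\,[b]_{W^{s,2/s}(B)}$ you then need, for the $a$-factor, the bound $\|\laps{s}\tilde a\|_{L^{2/s}(\R^2)}\aleq [\tilde a]_{W^{s,2/s}(\R^2)}$. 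But $2/s>2$, and for $p>2$ the Gagliardo seminorm $[\cdot]_{W^{s,p}}$ does \emph{not} control $\|\laps{s}\cdot\|_{L^p}$: this is the inclusion $\dot F^s_{p,p}\hookrightarrow\dot F^s_{p,2}$, which fails for $p>2$. Proposition~\ref{pr:sob}(2) in the paper has an explicit footnote warning that the estimate $\|\laps{s_1}f\|_{L^{p_1}}\aleq[f]_{W^{s_2,p_2}}$ is false at the diagonal $s_1=s_2$, $p_1=p_2>2$. The Gagliardo-to-Gagliardo embedding $W^{s,2/s}\hookrightarrow W^{1-s,2/(1-s)}$ you invoke does not help, because the problem is not the order gap on the $b$-side but the Bessel-vs-Gagliardo comparison on the $a$-side (and on the $b$-side as well, since $2/(1-s)>2$ too). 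The paper's proof sidesteps this by applying \eqref{eq:wmain} with an auxiliary parameter $\tilde s\in(1-s,s)$ strictly between the two; the condition $s>\tfrac12$ is exactly what makes this interval nonempty, and the strict drop in differential order is what allows Proposition~\ref{pr:sob}(2) to apply. You should make the same replacement: choose $\tilde s\in(1-s,s)$, apply \eqref{eq:wmain} with $\tilde s$, and only then use the Sobolev embedding and extension lemma.
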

\begin{remark}
In terms of Lorentz spaces the argument below readily leads to the following estimate.
\[
\|\nabla \lambda\|_{L^{p,q}(B)} \aleq \|\laps{s} \tilde{a}\|_{L^{p_1,q_1}(\R^2)}\, \|\laps{1-s} \tilde{b}\|_{L^{p_2,q_2}(\R^2)},
\]
whenever $s \in (0,1)$, $1 \leq q, q_1,q_2 \leq \infty$ and $1 < p,p_1,p_2 < \infty$ are such that 
\[
\frac{1}{p} = \frac{1}{p_1} + \frac{1}{p_2},
\]
and
\[\frac{1}{q_1} + \frac{1}{q_2} = \frac{1}{q}.\]

In particular, for $\frac{1}{q_1} + \frac{1}{q_2} = 1$ we have by the embedding $\lapms{s} L^{(\frac{n}{s},1)}(\R^n) \subset L^\infty(\R^n)$,
\[
\|\lambda\|_{L^{\infty}(B)} \aleq \|\laps{s} \tilde{a}\|_{L^{4,q_1}(\R^2)}\, \|\laps{1-s} \tilde{b}\|_{L^{4,q_2}(\R^2)},
\]
For $s = 1$, compare this to \cite[Lemma IV.2]{BR14}.
\end{remark}

\begin{proof}[Proof of Proposition~\ref{pr:wente}]
The estimate \eqref{eq:wwsp} follows from \eqref{eq:wmain} by Sobolev embedding, Proposition~\ref{pr:sob} and using the extension from Lemma~\ref{la:ext:1} -- since for $s > \frac{1}{2}$ we can always find $\tilde{s} \in (0,1)$ such that both $\tilde{s}$ and $1-\tilde{s} < s$.

Now let us prove \eqref{eq:wmain}. By a duality argument and Hodge decomposition we have
\[
\|\nabla \lambda\|_{L^{p}(B)} \aleq \int_{B} \nabla \lambda \cdot \nabla \varphi,
\]
where $\varphi \in C_c^\infty(B)$ and for $p' = \frac{p}{p-1}$,
\[
\|\nabla \varphi\|_{L^{p'}(B)} \leq 1.
\]
Thus,
\[
\|\nabla \lambda\|_{L^{p}(B)} = 
\left |\int_{B} \langle \nabla^\perp a, \nabla b \rangle \varphi \right | 
= \left |\int_{\R^2} \langle \nabla^\perp \tilde{a}, \nabla \tilde{b} \rangle \varphi \right |.
\]
The domain of integration can be chosen $\R^2$ because $\varphi$ is zero outside of $B$. We now pretend that $\tilde{a} = a$ and $\tilde{b} = b$ for the sake of simplicity of notation.

Now we follow the ideas in \cite[Theorem~3.1.]{LS18} to get a sharp estimate. Namely let $a^h$, $b^h$, $\lambda^h$ be the harmonic extension of the respective function to $\R^3_+$. We denote the variables in $\R^3_+$ as $(x,t)$, $x \in \R^2$, $t \in (0,\infty)$. Then the div-curl structure gives via an integration by parts (cf. \cite[Theorem~3.1.]{LS18})
\[
\|\nabla \lambda\|_{L^{p}(B)}  \leq \int_{\R^3_+} |\nabla \varphi^h|\, |\nabla a^h|\, |\nabla b^h|.
\]
From \cite[Proposition 10.1]{LS18} we have
\[
|\nabla \varphi^h|(x,t) \aleq \mathcal{M}|\nabla \varphi|(x) + \mathcal{M}|\laps{1} \varphi|(x) \quad \mbox{for all $t > 0$, $x \in \R^n$}.
\]
Consequently, from H\"older's inequality and the maximal theorem,
\[
\begin{split}
\|\nabla \lambda\|_{L^{p}(B)}  \aleq &\|\nabla \varphi\|_{L^{p'}(\R^2)}\\
&\quad \cdot  
\left \| \brac{\int_0^\infty |t^{\frac{1}{2}-s}\nabla a^h(\cdot ,t)|^2 dt}^{\frac{1}{2}} \right \|_{L^{p_1}(\R^2)}\\ 
 &\quad\quad\cdot \left \| \brac{\int_0^\infty |t^{\frac{1}{2}-(1-s)} \nabla b^h(\cdot ,t)|^2 dt}^{\frac{1}{2}} \right \|_{L^{p_2}(\R^2)}.
\end{split}
\]
The latter two quantities can be identified as trace spaces, see \cite[Proposition 10.2, (10.11)]{LS18}. We conclude the proof of Proposition~\ref{pr:wente}.
\end{proof}

\begin{proof}[Proof of Proposition~\ref{pr:wenteour}]
From Proposition~\ref{pr:wente}, namely \eqref{eq:wwsp} we obtain
\[
\|\nabla \lambda\|_{L^2(B)} \aleq [e_1]_{W^{s,\frac{n}{s}}(B)}\, [e_2]_{W^{s,\frac{n}{s}}(B)}.
\]
Taking $\eps_s$ from Proposition~\ref{pr:eest} we then get
\[
\|\nabla \lambda\|_{L^2(B)} \aleq \|\langle e_1,\nabla e_2 \rangle \|_{L^2(B)}^2+ [u]_{W^{s,\frac{n}{s}}(B)}^{\frac{n}{s}}.
\]
\end{proof}

\appendix

\section{Fractional Sobolev spaces, gagliardo norms}
\label{s:fracsob}
Moreover we will -- a few times -- use the fractional Laplacian $\laps{s}$ and its inverse the Riesz potentials $\lapms{s} = (-\Delta)^{-\frac{s}{2}}$ , for $s \in (0,n)$. These operators are multipliers defined by the Fourier transform,
\[
\mathcal{F} (\laps{s} f)(\xi) = |\xi|^s \mathcal{F}f(\xi).
\]
and
\[
\mathcal{F} (\lapms{s} f)(\xi) = |\xi|^{-s} \mathcal{F}f(\xi).
\]
For the Riesz potential $\lapms{s}$ we will also use the potential representation
\[
\lapms{s} f(x) = c\, \int_{\R^n} |x-y|^{s-n}\, f(y)\, dy.
\]
For convenience we will refer to $\laps{0} = Id$.

Observe that the fractional Laplacian $\laps{s}$ and the Riesz potential $\lapms{s}$ act on functions defined on all of $\R^n$, while the Sobolev norm $W^{s,p}$ can be defined on any open set. On the $L^2$-scale one can characterize the Slobodeckij-Gagliardo space with the fractional Laplacian, namely we have
\[
[f]_{W^{s,2}(\R^n)} \aeq \|\laps{s} f\|_{L^2(\R^n)} \quad \mbox{on $s \in (0,1)$}.
\]
This also holds for $s =1$ and any $p \in (1,\infty)$,
\[
[f]_{W^{1,p}(\R^n)} = \|\nabla f\|_{L^p(\R^n)} \aeq \|\laps{1} f\|_{L^p(\R^n)}, 
\]
but this is only due to the commonly accepted abuse of notation that in the scale of Triebel-Lizorkin spaces $W^{s,p}(\R^n) = F^s_{pp}(\R^n) = B^s_{pp}(\R^n)$ for $s \in (0,1)$ but $W^{1,p}(\R^n) = F^s_{p2}(\R^n)$.

\begin{proposition}[Sobolev-embedding properties]\label{pr:sob}
\begin{enumerate}
We have the following embeddings estimates
\item \[
\|\laps{s} f\|_{L^p(\R^n)} \aleq [f]_{W^{s,p}(\R^n)} \quad \mbox{for $s \in (0,1)$, $p \in (1, 2]$}
\]
\[
[f]_{W^{s,p}(\R^n)}  \aleq \|\laps{s} f\|_{L^p(\R^n)} \quad \mbox{for $s \in (0,1)$, $p \in [2, \infty)$}
\]
\item For $0 \leq s_1 < s_2 <1$ if $p_1, p_2 \in (1,\infty)$ and $s_1-\frac{n}{p_1} = s_2-\frac{n}{p_2}$\footnote{this estimate is false for $s_1 = s_2$ and $p_1 = p_2 > 2$!}
\[
\|\laps{s_1} f\|_{L^{p_1}(\R^n)} \aleq [f]_{W^{s_2,p_2}(\R^n)} 
\]
and for $0 \leq s_1 \leq s_2 <n$ if $p_1, p_2 \in (1,\infty)$ and $s_1-\frac{n}{p_1} = s_2-\frac{n}{p_2}$,
\[
\|\laps{s_1} f\|_{L^{p_1}(\R^n)} \aleq \|\laps{s_2} f\|_{L^{p_2}(\R^n)}. 
\]
\item For $s \in (0,n)$, $p_1,p_2 \in (1,\infty)$ such that $p_1 = \frac{n p_2}{n-sp_2}$ we have 
\[
\|\lapms{s} f\|_{L^{p_1}(\R^n)} \aleq \|f\|_{L^{p_2}(\R^n)}
\]
\item For any ball $B \subset \R^n$ or $B = \R^n$ if $0 < s_1 \leq s_2 <1$ if $p_1, p_2 \in (1,\infty)$ such that $s_1-\frac{n}{p_1} = s_2-\frac{n}{p_2}$ 
\[
[f]_{W^{s_1,p_1}(B)} \aleq [f]_{W^{s_2,p_2}(B)}
\]
\end{enumerate}
\end{proposition}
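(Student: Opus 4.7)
The statements in Proposition~\ref{pr:sob} are classical facts from the theory of fractional Sobolev, Besov, and Triebel-Lizorkin spaces, collected here for convenience. My plan is to treat (3) and (1) first as the analytic inputs, then derive (2) and (4) as formal consequences.

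Part (3) is the Hardy-Littlewood-Sobolev inequality applied to the Riesz potential $\lapms{s}$, which is convolution with the kernel $c|x|^{s-n}$; the stated $L^{p_2}\to L^{p_1}$ mapping property with $p_1 = np_2/(n-sp_2)$ is the canonical one (cf.\ Stein, \emph{Singular Integrals}, Ch.~V).

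Part (1) is the comparison between the Gagliardo seminorm $[\cdot]_{W^{s,p}(\R^n)}$ and the Bessel-potential seminorm $\|\laps{s}\cdot\|_{L^p(\R^n)}$, split at $p=2$. The quickest route uses the identifications $W^{s,p}(\R^n)=F^s_{p,p}(\R^n)$ for $s\in(0,1)$ and $\|\laps{s}f\|_{L^p}\aeq \|f\|_{F^s_{p,2}}$, together with the monotonicity $F^s_{p,q_1}\hookrightarrow F^s_{p,q_2}$ for $q_1\leq q_2$: this yields $F^s_{p,p}\hookrightarrow F^s_{p,2}$ when $p\leq 2$ and the reverse inclusion when $p\geq 2$, which is precisely (1). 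A self-contained substitute is the Strichartz characterization
\[
\|\laps{s}f\|_{L^p(\R^n)} \aeq \left\|\left(\int_{\R^n}\frac{|f(x)-f(y)|^2}{|x-y|^{n+2s}}\,dy\right)^{1/2}\right\|_{L^p_x(\R^n)},
\]
after which comparing the $\ell^2$- and $\ell^p$-quantities of $|f(x)-f(y)|\,|x-y|^{-s-n/p}$ with respect to the measure $|x-y|^{-n}\,dy$ (handled via a dyadic decomposition in $|x-y|$ to deal with its non-finiteness) gives the one-sided bounds in each direction.

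For part (2), the identity $\laps{s_1}f=\lapms{s_2-s_1}\laps{s_2}f$ combined with (3) produces $\|\laps{s_1}f\|_{L^{p_1}}\aleq \|\laps{s_2}f\|_{L^{p_2}}$ whenever $s_1-n/p_1 = s_2-n/p_2$, which is the second inequality of (2). For the first inequality, combine this with the appropriate half of (1); if $p_2\leq 2$ we get $\|\laps{s_2}f\|_{L^{p_2}}\aleq [f]_{W^{s_2,p_2}}$ directly, while for $p_2\geq 2$ one interposes the Besov embedding $B^{s_2}_{p_2,p_2}\hookrightarrow B^{s_1}_{p_1,p_1}$ in the strict sub-critical range $s_1<s_2$, whose proof by Hedberg's trick splits the inner integral into near- and far-diagonal parts and estimates each by a maximal function; then (1) at exponent $p_1$ translates the resulting Gagliardo bound into the desired $\laps{s_1}$-bound. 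Part (4) is the same Besov embedding now on a ball: extend $f$ to $\tilde f$ on $\R^n$ via Lemma~\ref{la:ext:1} with $[\tilde f]_{W^{s_2,p_2}(\R^n)}\aleq [f]_{W^{s_2,p_2}(B)}$, apply the global embedding, and use $[f]_{W^{s_1,p_1}(B)}\leq [\tilde f]_{W^{s_1,p_1}(\R^n)}$, which is immediate.

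The main obstacle is part (1): one unavoidably needs either Littlewood-Paley/Triebel-Lizorkin machinery or a direct Strichartz-type argument to handle the mismatch between the natural $\ell^2$ quantity encoding $\laps{s}$ and the $\ell^p$ quantity defining the Gagliardo seminorm. Once (1) and (3) are in hand, (2) and (4) are essentially bookkeeping, and the only care needed is to track which half of (1) applies depending on whether the exponent lies above or below $2$.
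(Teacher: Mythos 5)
The paper gives no argument for parts (1)--(3) of Proposition~\ref{pr:sob} --- its ``Remarks on the proofs'' only note that part (4) follows from the extension Lemma~\ref{la:ext:1} --- so you are supplying strictly more detail than the source. Your main route for (1) (the identifications $[f]_{W^{s,p}}\aeq\|f\|_{\dot{F}^s_{p,p}}$ and $\|\laps{s}f\|_{L^p}\aeq\|f\|_{\dot{F}^s_{p,2}}$ plus monotonicity in the microscopic index) is correct and standard, (3) is indeed Hardy--Littlewood--Sobolev, and (4) is in substance what the paper intends, with one caveat: Lemma~\ref{la:ext:1} gives a $\Lambda$-uniform bound only when $s_2p_2\geq n$, so for $s_2p_2<n$ you must either extend to a fixed dilate and insert a cutoff, or run the Hedberg/maximal-function argument directly on the ball.

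There is, however, a genuine gap in the first inequality of (2) when $p_2>2$. Since $s_1<s_2$ and $s_1-\frac{n}{p_1}=s_2-\frac{n}{p_2}$ force $p_1>p_2>2$, your final step ``(1) at exponent $p_1$ translates the Gagliardo bound into the $\laps{s_1}$-bound'' requires $\|\laps{s_1}f\|_{L^{p_1}}\aleq[f]_{W^{s_1,p_1}}$, which is the diagonal embedding $\dot{F}^{s_1}_{p_1,p_1}\hookrightarrow\dot{F}^{s_1}_{p_1,2}$ --- exactly the statement the footnote warns is \emph{false} for $p_1>2$; part (1) at $p_1\geq 2$ gives only the reverse inequality. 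The missing ingredient is the Franke--Jawerth embedding (this is what the paper's reference \cite{J77} supplies): under the \emph{strict} inequality $s_1<s_2$ with $s_1-\frac{n}{p_1}=s_2-\frac{n}{p_2}$ one has $\dot{B}^{s_2}_{p_2,p_2}=\dot{F}^{s_2}_{p_2,p_2}\hookrightarrow\dot{F}^{s_1}_{p_1,2}$, i.e.\ the off-diagonal Sobolev embedding improves the microscopic index for free; with that substitution the case $p_2>2$ closes (and this case does occur in the paper, e.g.\ in the proof of Proposition~\ref{pr:bigcommie} where $p_2=n/t<2n=p_1$ with both exceeding $2$). A smaller remark in the same spirit: your ``self-contained substitute'' for (1) via the Strichartz square function does not reduce to comparing $\ell^2$- and $\ell^p$-norms over spatial dyadic annuli, because within each annulus Jensen's inequality points one way while across annuli $\ell^p\subset\ell^2$ points the other; the frequency-side Littlewood--Paley argument is not avoidable there.
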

\begin{proof}[Remarks on the proofs]
The last statement follows from the extension Lemma~\ref{la:ext:1}.

\end{proof}

An important tool for us is the following extension argument.
\begin{lemma}[Extension Lemma]\label{la:ext:1}
Let $u \in L^1(B(0,1))$ and set 
\[
v(x) := \begin{cases}
u(x) \quad &|x| < 1,\\
u(x/|x|^2) \quad &|x| > 1.\\
\end{cases}
\]
Then for any $s \in (0,1]$, and any $\Lambda > 1$,
\[
[v]_{W^{s,p}(B(0,\Lambda))} \leq C(s,p)\, \Lambda^{2\max \{0,n-sp\}} [u]_{W^{s,p}(B(0,1))}.
\]
In particular,
\[
[v]_{W^{s,\frac{n}{s}}(\R^n)} \leq C(s,p)\, [u]_{W^{s,\frac{n}{s}}(B(0,1))}.
\]
\end{lemma}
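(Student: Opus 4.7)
The plan is to split the double Gagliardo integral defining $[v]_{W^{s,p}(B(0,\Lambda))}^p$ into four regions according to whether each of $x,y$ lies in $B(0,1)$ or in the annulus $B(0,\Lambda)\setminus B(0,1)$: the inner-inner piece, the outer-outer piece, and the two symmetric mixed pieces. On the inner-inner region $v=u$ so we immediately get the contribution $[u]_{W^{s,p}(B(0,1))}^p$, which already sits on the right-hand side. The whole proof is then about controlling the three remaining pieces by applying the inversion $\psi(x)=x/|x|^2$ as a change of variables, and carefully tracking the powers of $|\tilde x|,|\tilde y|$ that are produced.

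For the outer-outer piece, I would substitute $\tilde x=x/|x|^2$ and $\tilde y=y/|y|^2$; the standard identity $|\psi(x)-\psi(y)|=|x-y|/(|x||y|)$ together with the Jacobian $dx\,dy = |\tilde x|^{-2n}|\tilde y|^{-2n}d\tilde x\,d\tilde y$ converts the integrand into
\[
\frac{|u(\tilde x)-u(\tilde y)|^p}{|\tilde x-\tilde y|^{n+sp}}\,|\tilde x|^{sp-n}|\tilde y|^{sp-n},
\]
integrated over $(B(0,1)\setminus B(0,1/\Lambda))^2$. Since $|\tilde x|,|\tilde y|\in (1/\Lambda,1)$ the factor $|\tilde x|^{sp-n}|\tilde y|^{sp-n}$ is bounded by $\Lambda^{2\max\{0,n-sp\}}$, which is exactly the allowed power.

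The mixed piece is the one requiring the genuine observation. Here $x\in B(0,1)$, $y\in B(0,\Lambda)\setminus B(0,1)$, and I would substitute only in $y$ by $\tilde y = y/|y|^2 \in B(0,1)\setminus B(0,1/\Lambda)$, using $dy = |\tilde y|^{-2n}d\tilde y$ and $|x-y|=||\tilde y|^2 x -\tilde y|/|\tilde y|^2$. The integrand becomes
\[
\frac{|u(x)-u(\tilde y)|^p}{||\tilde y|^2 x-\tilde y|^{n+sp}}\,|\tilde y|^{2sp}.
\]
The key algebraic identity to prove (by expanding squares) is
\[
\frac{||\tilde y|^2 x-\tilde y|^2}{|\tilde y|^2}\;=\;|x-\tilde y|^2 + (1-|\tilde y|^2)(1-|x|^2),
\]
from which, since $|x|,|\tilde y|\leq 1$, one reads off the pointwise lower bound $||\tilde y|^2 x-\tilde y|\geq |\tilde y|\,|x-\tilde y|$. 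Substituting this into the denominator gives a clean integrand
\[
\frac{|u(x)-u(\tilde y)|^p}{|x-\tilde y|^{n+sp}}\,|\tilde y|^{sp-n},
\]
and the weight $|\tilde y|^{sp-n}$ is again bounded by $\Lambda^{\max\{0,n-sp\}}$ on $B(0,1)\setminus B(0,1/\Lambda)$. The other mixed piece is handled symmetrically. Summing the four contributions and taking $p$-th roots yields the claimed estimate.

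The main obstacle, as suggested above, is the mixed term: neither a naive pointwise comparison nor the direct inversion-of-both-variables trick controls $|x-y|$ from below by $|x-\tilde y|$ (take $x=\tilde y$ for a simple failure). The correct handle is the identity displayed above, which is essentially a rewriting of the Kelvin/conformal character of the inversion across $\partial B(0,1)$ and forces the remainder term $(1-|\tilde y|^2)(1-|x|^2)$ to have the right sign precisely because both points sit in the closed unit ball. Once that inequality is in hand, all remaining steps are bookkeeping of the powers of $|\tilde y|$ produced by the Jacobian versus the denominator, and the uniform bound by $\Lambda^{\max\{0,n-sp\}}$ on the annulus $\{1/\Lambda<|\tilde y|<1\}$. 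The specialization $\Lambda=\infty$ with $p=n/s$ giving the global statement $[v]_{W^{s,n/s}(\mathbb{R}^n)}\lesssim [u]_{W^{s,n/s}(B(0,1))}$ is then immediate because $\max\{0,n-sp\}=0$.
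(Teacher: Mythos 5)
Your proposal is correct and follows the same overall strategy as the paper: split the Gagliardo double integral over $B(0,\Lambda)$ into inner--inner, outer--outer, and mixed regions, invert the outer variable(s) via $\psi(x)=x/|x|^2$, and track the Jacobian factors against the weight $|\tilde y|^{sp-n}$, which is bounded by $\Lambda^{\max\{0,n-sp\}}$ on the annulus $\{1/\Lambda<|\tilde y|<1\}$. The outer--outer computation, using the exact identity $|\psi(x)-\psi(y)||x||y|=|x-y|$, is identical in both proofs. The one place where you diverge is the mixed term: the paper establishes the lower bound
\[
\Bigl|x-\frac{\tilde y}{|\tilde y|^2}\Bigr|\,|\tilde y|\ \gtrsim\ |x-\tilde y|
\]
by a two-case geometric argument (distinguishing $|\tilde y|>1/2$ from $|\tilde y|\le 1/2$, getting the constant $1/4$), whereas you prove the sharp algebraic identity
\[
\Bigl|\,|\tilde y|^2 x-\tilde y\,\Bigr|^2\big/|\tilde y|^2\ =\ |x-\tilde y|^2+(1-|\tilde y|^2)(1-|x|^2),
\]
which (I verified by expanding) holds exactly, and observe that the error term is nonnegative when both points lie in the closed unit ball. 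This gives the same inequality with constant $1$ and no case analysis; it is cleaner and sharper than the paper's version, though it buys nothing at the level of the stated conclusion since only the order of growth in $\Lambda$ matters. (Your parenthetical remark that ``$x=\tilde y$ gives a simple failure'' is not actually an obstruction to the desired one-sided lower bound, since the right-hand side vanishes there; the real obstruction is just that no naive estimate produces the factor of $|\tilde y|$ needed to absorb the Jacobian --- which is precisely what your identity supplies.)
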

\begin{proof}
Clearly, $v \in L^1_{loc}(\R^n)$. Moreover we claim
\begin{equation}\label{eq:vvsu}
[v]_{W^{s,p}(B(4))} \aleq [u]_{W^{s,p}(B(1))}.
\end{equation}
Indeed, by splitting the integral we have
\[
[v]_{W^{s,p}(B(\Lambda))}^p \aleq [u]_{W^{s,p}(B(1))}^p + \int_{B(\Lambda) \backslash B(1)}\int_{B(\Lambda) \backslash B(1)} \frac{|v(x)-v(y)|^p}{|x-y|^{n+sp}}\, dx\, dy
+ \int_{B(\Lambda) \backslash B(1)}\int_{B(1)} \frac{|u(x)-v(y)|^p}{|x-y|^{n+sp}}\, dx\, dy
\]
For the second term we us the transformation $\tilde{x} := \frac{x}{|x|^2}$ and $\tilde{y} := \frac{y}{|y|^2}$ which leads to
\[
\begin{split}
&\int_{B(\Lambda) \backslash B(1)} \int_{B(\Lambda) \backslash B(1)} \frac{|v(x)-v(y)|^p}{|x-y|^{n+sp}}\, dx\, dy\\
=& \int_{B(1) \backslash B(1/\Lambda)}\int_{B(1) \backslash B(1/\Lambda)} \frac{|u(\tilde{x})-u(\tilde{x})|^p}{\left |\frac{\tilde{x}}{|\tilde{x}|^2}-\frac{\tilde{y}}{|\tilde{y}|^2}\right |^{n+sp}}\, \frac{d\tilde{x}}{|\tilde{x}|^{2n}}\, \frac{d\tilde{y}}{|\tilde{y}|^{2n}}\\
=& \int_{B(1) \backslash B(1/\Lambda)}\int_{B(1) \backslash B(1/\Lambda)} \frac{|u(\tilde{x})-u(\tilde{x})|^p}{\left |\frac{\tilde{x}}{|\tilde{x}|^2}-\frac{\tilde{y}}{|\tilde{y}|^2}\right |^{n+sp} |\tilde{x}|^{n+sp} |\tilde{y}|^{n+sp}}\, \frac{d\tilde{x}}{|\tilde{x}|^{n-sp}}\, \frac{d\tilde{y}}{|\tilde{y}|^{n-sp}}\\
\aleq & \Lambda^{2\max \{0,n-sp\}}
\int_{B(1) \backslash B(1/\Lambda)}\int_{B(1) \backslash B(1/\Lambda)} \frac{|u(\tilde{x})-u(\tilde{x})|^p}{\brac{\left |\frac{\tilde{x}}{|\tilde{x}|^2}-\frac{\tilde{y}}{|\tilde{y}|^2}\right | |\tilde{x}|\, |\tilde{y}|}^{n+sp}}\, d\tilde{x}\, d\tilde{y}\\
\end{split}
\]
Now observe
\[
\left |\frac{\tilde{x}}{|\tilde{x}|^2}-\frac{\tilde{y}}{|\tilde{y}|^2}\right | |\tilde{x}|\, |\tilde{y}| = |\tilde{x}-\tilde{y}|.
\]
Indeed,
\[
\left |\frac{\tilde{x}}{|\tilde{x}|^2}-\frac{\tilde{y}}{|\tilde{y}|^2}\right |^2 |\tilde{x}|^2\, |\tilde{y}|^2 = 
\brac{\frac{1}{|\tilde{x}|^2}+\frac{1}{|\tilde{y}|^2}-2\frac{\tilde{x}}{|\tilde{x}|^2}\cdot \frac{\tilde{y}}{|\tilde{y}|^2} }|\tilde{x}|^2\, |\tilde{y}|^2
= |\tilde{x}|^2+|\tilde{y}|^2 - 2 \tilde{x} \cdot \tilde{y} = |\tilde{x}-\tilde{y}|^2.
\]
Thus we have shown,
\[
\begin{split}
&\int_{B(\Lambda) \backslash B(1)} \int_{B(\Lambda) \backslash B(1)} \frac{|v(x)-v(y)|^p}{|x-y|^{n+sp}}\, dx\, dy\\
\aleq & \Lambda^{2\max \{0,n-sp\}}
\int_{B(1) \backslash B(1/\Lambda)}\int_{B(1) \backslash B(1/\Lambda)} \frac{|u(\tilde{x})-u(\tilde{x})|^p}{|\tilde{x}-\tilde{y}|^{n+sp}}\, d\tilde{x}\, d\tilde{y}\\
\end{split}
\]
For the second term we get by similar considerations
\[
\begin{split}
& \int_{B(\Lambda) \backslash B(1)}\int_{B(1)} \frac{|u(x)-v(y)|^p}{|x-y|^{n+sp}}\, dx\, dy\\
\aleq & \Lambda^{\max \{0,n-sp\}} \int_{B(1) \backslash B(1/\Lambda)}\int_{B(1)} \frac{|u(x)-u(\tilde{y})|^p}{\brac{\left |x-\frac{\tilde{y}}{|\tilde{y}|^2}\right |\, |\tilde{y}|}^{n+sp}}\, dx\, d\tilde{y}\\
\end{split}
\]
Now by geometric considerations for sphere-inversions, whenever $|x|, |\tilde{y}| < 1$,
\[
\left |x-\frac{\tilde{y}}{|\tilde{y}|^2}\right | \geq |x-\tilde{y}|,
\]
and consequently,
\begin{equation}\label{eq:21}
\left |x-\frac{\tilde{y}}{|\tilde{y}|^2}\right |\, |\tilde{y}| \geq \frac{1}{2}|x-\tilde{y}| \quad \mbox{for $|x| < 1$ and $|\tilde{y}| \in (\frac{1}{2},1)$},
\end{equation}
On the other hand, for $|y| \leq \frac{1}{2}$
\[
\left |x-\frac{\tilde{y}}{|\tilde{y}|^2}\right | \geq \frac{1}{2} \frac{1}{|\tilde{y}|}
\]
which implies that
and consequently,
\begin{equation}\label{eq:22}
\left |x-\frac{\tilde{y}}{|\tilde{y}|^2}\right |\, |\tilde{y}| \geq \frac{1}{2} \geq \frac{1}{4} |x-\tilde{y}| \quad \mbox{for $|x| < 1$ and $|\tilde{y}| \in (0,\frac{1}{2})$},
\end{equation}
From \eqref{eq:21} \eqref{eq:22} we obtain
\[
\begin{split}
& \int_{B(\Lambda) \backslash B(1)}\int_{B(1)} \frac{|u(x)-v(y)|^p}{|x-y|^{n+sp}}\, dx\, dy\\
\aleq & \Lambda^{\max \{0,n-sp\}} \int_{B(1) \backslash B(1/\Lambda)}\int_{B(1)} \frac{|u(x)-u(\tilde{y})|^p}{|x-\tilde{y}|^{n+sp}}\, dx\, d\tilde{y}\\
\end{split}
\]
The claim is now established.
\end{proof}

\section{Sobolev-embedding and Commutator-type estimates}

\begin{proposition}\label{pr:sob1}
Let $s \in [\frac{1}{2},1)$, $n \geq 2$, then
\[
\brac{\int_{\R^n}\int_{\R^n} \left |\int_{\R^n} \brac{\frac{x-z}{|x-z|^n} - \frac{y-z}{|y-z|^n}}\, f(z)\, dz \right|^{\frac{n}{s}} \frac{dx\, dy}{|x-y|^{2n}}}^{\frac{s}{n}}\, \aleq \|f\|_{L^n(\R^n)}
\]
\end{proposition}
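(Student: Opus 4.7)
The plan is to recognize the singular integral inside the norm as (up to a constant) the image of $f$ under a classical convolution operator, and then read off the Gagliardo seminorm estimate from the mapping properties of Riesz potentials and Riesz transforms.

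First, for each index $\alpha$, set
\[
F^\alpha(x) := \int_{\R^n} \frac{(x-z)^\alpha}{|x-z|^n}\, f(z)\, dz.
\]
The Fourier symbol of the convolution kernel $x^\alpha/|x|^n$ is $c\, i\xi_\alpha/|\xi|^2$, so $F^\alpha = c\, \laps{-1} R_\alpha f$, where $R_\alpha = \laps{-1}\partial_\alpha$ is the $\alpha$-th Riesz transform. Consequently, the integrand on the left-hand side of the proposition equals $|F(x)-F(y)|^{n/s}/|x-y|^{2n}$ (up to constants), and the left-hand side is precisely the Gagliardo seminorm $[F]_{W^{s,n/s}(\R^n)}$ of the vector-valued function $F$.

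Next, note that $p := n/s \geq 2$, since $n \geq 2$ and $s < 1$ imply $n/s > n \geq 2$. Thus Proposition~\ref{pr:sob}(1) applies and yields
\[
[F]_{W^{s,n/s}(\R^n)} \aleq \|\laps{s} F\|_{L^{n/s}(\R^n)}.
\]
Composing Fourier multipliers, $\laps{s} F^\alpha = c\, \laps{s-1} R_\alpha f = c\, \lapms{1-s} R_\alpha f$. Boundedness of the Riesz transform on $L^n(\R^n)$ (valid for $1<n<\infty$) gives $\|R_\alpha f\|_{L^n} \aleq \|f\|_{L^n}$, and then the Hardy--Littlewood--Sobolev inequality of Proposition~\ref{pr:sob}(3), with $p_2 = n$ and $p_1 = np_2/(n-(1-s)p_2) = n/s$, gives
\[
\|\lapms{1-s} R_\alpha f\|_{L^{n/s}(\R^n)} \aleq \|R_\alpha f\|_{L^n(\R^n)} \aleq \|f\|_{L^n(\R^n)}.
\]
Chaining these inequalities delivers the claim.

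The only technical wrinkle is that for a general $f \in L^n(\R^n)$, the integral defining $F^\alpha$ need not be absolutely convergent, so strictly speaking $F$ should be understood as a tempered distribution (or equivalently modulo constants). This is the main obstacle and is handled routinely by first proving the inequality for Schwartz $f$ (where the representation $F^\alpha = c\,\laps{-1} R_\alpha f$ and all manipulations above are literal pointwise identities), and then extending the inequality by density, since both sides are continuous in $f \in L^n$ under the identification of $F$ modulo constants (which does not affect the Gagliardo seminorm).
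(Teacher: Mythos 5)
Your proof is correct, and it takes a genuinely different route from the paper's. Where the paper substitutes $f = \laps{\beta}g$ with $\beta = s+\eps$, invokes an argument from an external reference to bound the resulting quantity by $[g]_{W^{2s+\eps-1,n/s}}$, and then uses Sobolev embedding to come back to $\|\laps{\beta}g\|_{L^n} = \|f\|_{L^n}$, you instead recognize the inner integral directly as the difference $F(x)-F(y)$ where $F = c\,\lapms{1}Rf$, pass from the Gagliardo seminorm to $\|\laps{s}F\|_{L^{n/s}}$ via Proposition~\ref{pr:sob}(1) (valid since $n/s \geq n \geq 2$), and then compose Fourier multipliers to land on $\|\lapms{1-s}Rf\|_{L^{n/s}}$, which HLS and Riesz transform boundedness control by $\|f\|_{L^n}$. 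Your version is more self-contained (it avoids the citation to an external lemma), dispenses with the auxiliary $\eps$-parameter, and each step uses only the stated Proposition~\ref{pr:sob}; the price is that one must handle the fact that $\lapms{1}$ maps $L^n$ only into BMO, so $F$ is defined modulo constants -- but you correctly flag that this is immaterial since only the differences $F(x)-F(y)$ enter, and a density argument over Schwartz $f$ settles it. Both routes exploit that the kernel $x^\alpha/|x|^n$ is (up to constant) the kernel of $\lapms{1}R_\alpha = \partial_\alpha\lap^{-1}$; the paper actually uses this very representation one level up in the proof of Theorem~\ref{th:eestgen}, so your approach fits naturally into the surrounding arguments.
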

\begin{proof}
Set $\beta := s + \eps$, where $\eps > 0$ is chosen small enough such that $2s+\eps - 1 \in (0,1)$.

For
\[
k_1(x,y,z) := \brac{|x-z|^{1-n}\frac{x-z}{|x-z|} - |y-z|^{1-n}\frac{y-z}{|y-z|}}.
\]
Let us set 
\[
A(g) := \brac{\int_{\R^n} \int_{\R^n} \left | \int_{\R^n} k_1(x,y,z)\, \laps{\beta} g(z) \, dz\right |^{\frac{n}{s}}  \frac{dx\, dy}{|x-y|^{n+s\frac{n}{s}}}}^{\frac{1}{p}}
\]
Then an argument almost verbatim to \cite[Lemma 1.2]{S16} implies
\[
A(g) \aleq [g]_{W^{2s+\eps-1,\frac{n}{s}}(\R^n)}.
\]
Since $s < 1$ we have that $\beta > 2s+\eps -1$. Thus, by Sobolev embedding, Proposition~\ref{pr:sob}, we have
\[
[g]_{W^{2s+\eps-1,\frac{n}{s}}(\R^n)} \aleq \|\laps{\beta} g\|_{L^n(\R^n)}.
\]
Applying this to $g := \lapms{\beta} f$ we find
\[
\brac{\int_{\R^n}\int_{\R^n} \left |\int_{\R^n} \brac{\frac{x-z}{|x-z|^n} - \frac{y-z}{|y-z|^n}}\, f(z)\, dz \right|^{\frac{n}{s}} \frac{dx\, dy}{|x-y|^{2n}}}^{\frac{s}{n}}
\aleq \|f\|_{L^n(\R^n)}.
\]
\end{proof}
We will also need the following commutator-type estimate.
\begin{proposition}\label{pr:bigcommie}
Let $s \in (\frac{1}{2},1)$ and $n \geq 2$ and $t \in (\frac{1}{2},1)$. Then,
\[
\begin{split}
&\brac{\int_{\R^n} \int_{\R^n} \left |\int_{\R^n} \brac{\frac{(x-z)}{|x-z|^n} - \frac{(y-z)}{|y-z|^n}}\, (f(x) + f(y)-2f(z))\, \nabla g(z)\, dz \right|^\frac{n}{s}\, \frac{dx\, dy}{|x-y|^{2n}}}^{\frac{s}{n}}\\
\aleq& [f]_{W^{t,\frac{n}{t}}(\R^n)}\, [g]_{W^{t,\frac{n}{t}}(\R^n)}
\end{split}
\]
\end{proposition}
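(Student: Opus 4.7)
The approach is to rewrite the triple integral defining
\[
T(x,y) := \int_{\R^n} K(x,y,z)\cdot (f(x)+f(y)-2f(z))\,\nabla g(z)\,dz,\qquad K(x,y,z)=\tfrac{x-z}{|x-z|^n}-\tfrac{y-z}{|y-z|^n},
\]
in a form that exposes the kernel as a first difference of an order-$(-1)$ operator, and then invoke commutator / paraproduct estimates of the type developed in \cite{S16, LS18}. Observe that (up to constants) $K(x,y,z)=\nabla\Gamma(x-z)-\nabla\Gamma(y-z)$ with $\Gamma$ the Newtonian potential, so that $\int K(x,y,z)\cdot h(z)\,dz = c\,(S\cdot h(x)-S\cdot h(y))$ where $S:=\lapms{1}\,R$ is the composition of the order-$1$ Riesz potential with the vector Riesz transform. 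The identity $\sum_\alpha \partial_\alpha R_\alpha = -\laps{1}$ gives $S\cdot\nabla = -\mathrm{Id}$; pulling the $(x,y)$-dependent factor out of the $z$-integral and using this identity yields the key algebraic decomposition
\[
T(x,y)\;=\;-(f(x)+f(y))(g(x)-g(y))\;+\;2\bigl(S(f\nabla g)(y)-S(f\nabla g)(x)\bigr),
\]
and the distributional integration by parts $S(f\nabla g)+S(g\nabla f)=-fg$ supplies the equivalent ``dual'' form
\[
T(x,y)\;=\;(f(x)-f(y))(g(x)+g(y))\;+\;2\bigl(S(g\nabla f)(x)-S(g\nabla f)(y)\bigr).
\]

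\textbf{Algebraic part.} Using the second form and splitting $g(x)+g(y)=(g(x)-g(y))+2g(y)$, one obtains a Jacobian-like piece $(f(x)-f(y))(g(x)-g(y))$, which by Cauchy--Schwarz in the Gagliardo double integral is bounded by
\[
\brac{\iint\frac{|(f(x)-f(y))(g(x)-g(y))|^{n/s}}{|x-y|^{2n}}\,dx\,dy}^{s/n}\le [f]_{W^{s/2,\,2n/s}}[g]_{W^{s/2,\,2n/s}} \aleq [f]_{W^{t,n/t}}[g]_{W^{t,n/t}},
\]
where the last step is the critical-scaling Sobolev embedding of Proposition~\ref{pr:sob} (applicable since $s/2-s/2=t-t=0$ and $s/2<1/2<t$). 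The remaining cross-term $g(y)(f(x)-f(y))$ (and, by using the first form, its partner $f(y)(g(x)-g(y))$) is handled by the representation $f(x)-f(y)=c\int(|x-w|^{t-n}-|y-w|^{t-n})\laps{t}f(w)\,dw$ together with the three-commutator estimate of \cite[Lemma~1.2]{S16}, reducing the bound to a continuous bilinear pairing in $\laps{t}f,\laps{t}g\in L^{n/t}$.

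\textbf{Operator part.} For $F:=S(f\nabla g)$ the desired inequality $[F]_{W^{s,n/s}(\R^n)}\aleq[f]_{W^{t,n/t}}[g]_{W^{t,n/t}}$ follows first by Proposition~\ref{pr:sob}(1) (which applies since $n/s\ge 2$) as $[F]_{W^{s,n/s}}\aleq\|\laps{s}F\|_{L^{n/s}}=\|\lapms{1-s}R(f\nabla g)\|_{L^{n/s}}$, reducing by the $L^{n/s}$-boundedness of $R$ to $\|\lapms{1-s}(f\nabla g)\|_{L^{n/s}}$. Since $f\nabla g$ is not a priori in any convenient $L^p$ a direct Hardy--Littlewood--Sobolev bound is unavailable; instead, I would decompose $f\nabla g$ by a Littlewood--Paley/paraproduct splitting and in each dyadic piece commute $\lapms{1-s}$ past one of the two factors via the sharp fractional Leibniz rule from \cite[Theorem~3.1]{LS18}. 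The outcome is exactly $\aleq\|\laps{t}f\|_{L^{n/t}}\|\laps{t}g\|_{L^{n/t}}\aeq[f]_{W^{t,n/t}}[g]_{W^{t,n/t}}$; the assumption $t>1/2$ is used here to ensure that the single derivative together with the order-$(1-s)$ Riesz potential can be absorbed into the total smoothness $2t$.

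\textbf{Main obstacle.} The genuinely delicate point is that at the critical Sobolev exponent $W^{t,n/t}$ the functions $f,g$ lie only in BMO, not in $L^\infty$, so H\"older's inequality cannot peel off a supremum bound for either factor. The rescue is the cancellation built into the double difference $f(x)+f(y)-2f(z)$, or equivalently the antisymmetric combination $f\nabla g - g\nabla f$ that emerges after the integration by parts; extracting this cancellation rigorously is precisely what the three-commutator/paraproduct machinery of \cite{DLR11b,S16,LS18} is designed for. I expect the operator part of Step 2 to be the most technically demanding step, because there the paraproduct analysis must juggle the three-way interaction between the Riesz potential, the derivative, and two critical-Sobolev factors simultaneously; the proof of Proposition~\ref{pr:sob1} earlier in the paper uses the same mechanism and provides the template.
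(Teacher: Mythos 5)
There is a genuine gap, and it sits at the heart of your decomposition. Writing $\int K(x,y,z)\cdot h(z)\,dz = c\,(Sh(x)-Sh(y))$ with $S=\lapms{1}R$ and then splitting
\[
T(x,y)\;=\;-(f(x)+f(y))(g(x)-g(y))\;+\;2\bigl(S(f\nabla g)(y)-S(f\nabla g)(x)\bigr)
\]
destroys exactly the cancellation that makes the proposition true: the two pieces are \emph{not} separately bounded by $[f]_{W^{t,\frac{n}{t}}}[g]_{W^{t,\frac{n}{t}}}$. Take $f\equiv c$ a nonzero constant. Then $T\equiv 0$ and the right-hand side of the proposition is $0$, but $S(f\nabla g)=-cg$, so your claimed ``Operator part'' estimate $[S(f\nabla g)]_{W^{s,\frac{n}{s}}}\aleq [f]_{W^{t,\frac{n}{t}}}[g]_{W^{t,\frac{n}{t}}}$ would read $|c|\,[g]_{W^{s,\frac{n}{s}}}\aleq 0$, which is false; no paraproduct or Leibniz-rule argument can rescue it, because the low--high paraproduct piece of $f\nabla g$ genuinely requires $f\in L^\infty$, and $[f]_{W^{t,\frac{n}{t}}}$ gives only BMO. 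The same counterexample ($g$ constant this time) kills the standalone estimate for the cross-term $g(y)(f(x)-f(y))$: the corresponding double integral equals $|c|\,[f]_{W^{s,\frac{n}{s}}}$ while the right-hand side vanishes. Your ``Main obstacle'' paragraph correctly identifies the BMO issue, but the plan never actually resolves it: the cross-term and the operator term must be kept \emph{together}, since
\[
2g(y)\bigl(f(x)-f(y)\bigr)+2\bigl(S(g\nabla f)(x)-S(g\nabla f)(y)\bigr)=2\Bigl[S\bigl((g-g(y))\nabla f\bigr)(x)-S\bigl((g-g(y))\nabla f\bigr)(y)\Bigr],
\]
and only this recombined commutator (not its two summands) carries the needed cancellation. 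As written, every intermediate object in your Steps 2--3 is an unbounded quantity, so the argument cannot be completed along these lines without reorganizing it around such commutators from the start.

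For comparison, the paper's proof never lets go of the triple difference: it writes $\nabla g = D^{\frac{1}{2}}\lapv g$, moves the half-derivative in $z$ onto the product of the kernel with $f(x)+f(y)-2f(z)$ via a Leibniz-type expansion, and then controls each resulting kernel pointwise (Lemmas~\ref{la:kxyz3} and \ref{la:xyz1}) together with the bound $|f(z)-f(w)|\aleq |z-w|^{t_2}\brac{\mathcal{M}\laps{t_2}f(z)+\mathcal{M}\laps{t_2}f(w)}$, so that every term vanishes when $f$ is constant; the resulting five pieces are then integrated using Proposition~\ref{pr:last}, Proposition~\ref{pr:pottriebelemb} and Lemma~\ref{la:minguyz}. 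If you want to pursue your Fourier-multiplier route, you would need to carry out the analogous bookkeeping for the commutators $S((f-f(y))\nabla g)(x)-S((f-f(y))\nabla g)(y)$ rather than for $S(f\nabla g)$ itself. Your treatment of the Jacobian-like piece $(f(x)-f(y))(g(x)-g(y))$ by Cauchy--Schwarz and Sobolev embedding is fine.
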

\begin{proof}
We denote
\[
\begin{split}
T(f,g)(x,y) :=&\int_{\R^n} \brac{\frac{(x-z)}{|x-z|^n} - \frac{(y-z)}{|y-z|^n}}\, \cdot \nabla g(z)\, (f(x) + f(y)-2f(z))\, dz\\
\end{split}
\]
Writing $\nabla g = D^{\frac{1}{2}} \lapv g$, we have that
\[
T(f,g)(x,y) = \int_{\R^n} D_z^{\frac{1}{2}} \cdot \brac{\brac{\frac{(x-z)}{|x-z|^n} - \frac{(y-z)}{|y-z|^n}}\,  (f(x) + f(y)-2f(z))}\, \lapv g(z) dz\\
\]
With the integral representation
\[
D^{\frac{1}{2}} a(z) = \int_{\R^n} \brac{a(z)-a(w)}\frac{z-w}{|z-w|^{n+\frac{3}{2}}}\, dz
\]
we find
\[
D^{\frac{1}{2}} (ab)(z) = D^{\frac{1}{4}} a(z)\, b(z) + 
\int_{\R^n}(b(z) -b(w))\, a(w) \, \frac{z-w}{|z-w|^{n+\frac{3}{2}}}\, dz
\]
This implies for any $t_1 \in [0,1]$, using also the estimate from Lemma~\ref{la:kxyz3},
\[
\begin{split}
&\left |D_z^{\frac{1}{2}} \cdot \brac{\brac{\frac{(x-z)}{|x-z|^n} - \frac{(y-z)}{|y-z|^n}}\,  (f(x) + f(y)-2f(z))}\right |\\
\aleq & \left ||x-z|^{\frac{1}{2}-n}-|y-z|^{\frac{1}{2}-n} \right |\, |f(x)+f(y)-2f(z)|\\
& + \int_{\R^n}|f(z) -f(w)|\, \left ||x-w|^{1-n} - |y-w|^{1-n}  \right | \, \frac{1}{|z-w|^{n+\frac{1}{2}}}\, dz\\
& + |x-y|^{t_1} \int_{\R^n}|f(z) -f(w)|\,  \min\{|x-w|^{1-t-n}, |y-w|^{1-t-n}\}\frac{1}{|z-w|^{n+\frac{1}{2}}}\, dz
\end{split}
\]

Using the estimate (see \cite[Proposition 6.6.]{S19})
\[
|f(z)-f(w)| \aleq |z-w|^{t_2} \brac{|\mathcal{M} \laps{t_1} f(z)|+|\mathcal{M} \laps{t_2} f(w)|} \quad \mbox{for $t_2 \in [0,1)$}
\]
we then arrive at
\[
\begin{split}
&\left |D_z^{\frac{1}{2}} \cdot \brac{\brac{\frac{(x-z)}{|x-z|^n} - \frac{(y-z)}{|y-z|^n}}\,  (f(x) + f(y)-2f(z))}\right |\\
\aleq & \left ||x-z|^{\frac{1}{2}-n}-|y-z|^{\frac{1}{2}-n} \right |\, |f(x)+f(y)-2f(z)|\\
& + \int_{\R^n}\mathcal{M} \laps{t_2} f(z)\, \left ||x-w|^{1-n} - |y-w|^{1-n}  \right | \, \frac{1}{|z-w|^{n+\frac{1}{2}-t_2}}\, dw\\
& + \int_{\R^n} \mathcal{M} \laps{t_2} f(w)\, \left ||x-w|^{1-n} - |y-w|^{1-n}  \right | \, \frac{1}{|z-w|^{n+\frac{1}{2}-t_2}}\, dw\\
& + |x-y|^{t_1} \int_{\R^n}\mathcal{M} \laps{t_2} f(z)\,  \min\{|x-w|^{1-t_1-n}, |y-w|^{1-t_1-n}\}\frac{1}{|z-w|^{n+\frac{1}{2}-t_2}}\, dw\\
& + |x-t|^{t_1} \int_{\R^n}\mathcal{M} \laps{t_2} f(w)\,  \min\{|x-w|^{1-t_1-n}, |y-w|^{1-t_1-n}\}\frac{1}{|z-w|^{n+\frac{1}{2}-t_2}}\, dw\\
\end{split}
\]
By Lemma~\ref{la:xyz1}, for any $t_3 \in (0,1)$, we can estimate this further by
\[
\begin{split}
\aleq & \left ||x-z|^{\frac{1}{2}-n}-|y-z|^{\frac{1}{2}-n} \right |\, |f(x)+f(y)-2f(z)|\\
& + |x-y|^{1-t_3}\int_{\R^n}\mathcal{M} \laps{t_2} f(z)\, \left ||x-w|^{t_3-n} - |y-w|^{t_3-n}  \right | \, \frac{1}{|z-w|^{n+\frac{1}{2}-t_2}}\, dw\\
& + |x-y|^{1-t_3} \int_{\R^n} \mathcal{M} \laps{t_2} f(w)\, \left ||x-w|^{t_3-n} - |y-w|^{t_3-n}  \right | \, \frac{1}{|z-w|^{n+\frac{1}{2}-t_2}}\, dw\\
& + |x-y|^{t_1} \int_{\R^n}\mathcal{M} \laps{t_2} f(z)\,  \min\{|x-w|^{1-t_1-n}, |y-w|^{1-t_1-n}\}\frac{1}{|z-w|^{n+\frac{1}{2}-t_2}}\, dw\\
& + |x-t|^{t_1} \int_{\R^n}\mathcal{M} \laps{t_2} f(w)\,  \min\{|x-w|^{1-t_1-n}, |y-w|^{1-t_1-n}\}\frac{1}{|z-w|^{n+\frac{1}{2}-t_2}}\, dw\\
\end{split}
\]
Consequently, we arrive at
\[
\begin{split}
&T(x,y)\\
 \aleq &\int_{\R^n}   \left ||x-z|^{\frac{1}{2}-n}-|y-z|^{\frac{1}{2}-n} \right |\, |f(x)+f(y)-2f(z)|\, |\lapv g(z)| dz\\
&+|x-y|^{1-t_3}\int_{\R^n}   \int_{\R^n}\mathcal{M} \laps{t_2} f(z)\, \left ||x-w|^{t_3-n} - |y-w|^{t_3-n}  \right | \, \frac{1}{|z-w|^{n+\frac{1}{2}-t_2}}\, dw\, |\lapv g(z)| dz\\
&+|x-y|^{1-t_3}\int_{\R^n} \int_{\R^n} \mathcal{M} \laps{t_2} f(w)\, \left ||x-w|^{t_3-n} - |y-w|^{t_3-n}  \right | \, \frac{1}{|z-w|^{n+\frac{1}{2}-t_2}}\, dw \, |\lapv g(z)| dz\\
&+|x-y|^{t_1}\int_{\R^n}  \int_{\R^n}\mathcal{M} \laps{t_2} f(z)\,  \min\{|x-w|^{1-t_1-n}, |y-w|^{1-t_1-n}\}\frac{1}{|z-w|^{n+\frac{1}{2}-t_2}}\, dw\, |\lapv g(z)| dz\\
&+|x-y|^{t_1}\int_{\R^n}  \int_{\R^n}\mathcal{M} \laps{t_2} f(w)\,  \min\{|x-w|^{1-t_1-n}, |y-w|^{1-t_1-n}\}\frac{1}{|z-w|^{n+\frac{1}{2}-t_2}}\, dw\, |\lapv g(z)| dz.
\end{split}
\]
Using the integral representation of the Riesz potential, if we choose $\frac{1}{2} < t_2 < \min\{s,t\}$ (where $t$ is from the statement of the claim), this becomes
\[
T(x,y) \aleq \sum_{i=1}^5 T_i(x,y),
\]
where
\[
\begin{split}
T_1(x,y) :=& \int_{\R^n}   \left ||x-z|^{\frac{1}{2}-n}-|y-z|^{\frac{1}{2}-n} \right |\, |f(x)+f(y)-2f(z)|\, |\lapv g(z)| dz\\
T_2(x,y) :=& |x-y|^{1-t_3} \int_{\R^n}   \lapms{t_2-\frac{1}{2}} \brac{\mathcal{M} \laps{t_2} f |\lapv g|}(w)\, \left ||x-w|^{t_3-n} - |y-w|^{t_3-n}  \right | dw\\
T_3(x,y) :=& |x-y|^{1-t_3} \int_{\R^n} \mathcal{M} \laps{t_2} f(w)\, \left ||x-w|^{t_3-n} - |y-w|^{t_3-n}  \right | \, \lapms{t_2-\frac{1}{2}}|\lapv g(w)|\, dw\\
T_4(x,y) :=& |x-y|^{t_1}\int_{\R^n} \lapms{t_2-\frac{1}{2}}\brac{\mathcal{M} \laps{t_2} f |\lapv g|}(w)\,  \min\{|x-w|^{1-t_1-n}, |y-w|^{1-t_1-n}\}\, dw\\
T_5(x,y) :=& |x-y|^{t_1}\int_{\R^n}\mathcal{M} \laps{t_2} f(w)\, \lapms{t_2-\frac{1}{2}}|\lapv g|(w) \min\{|x-w|^{1-t_1-n}, |y-w|^{1-t_1-n}\}\, dw
\end{split}
\]
For $T_1$ we obtain from Proposition~\ref{pr:last},
\[
\brac{\int_{\R^n} \int_{\R^n} \frac{|T_1(x,y)|^{\frac{n}{s}}|}{|x-y|^{2n}}\, dx\, dy}^{\frac{s}{n}}  \aleq\|\lapv f\|_{L^{2n}(\R^n)}\, \|\lapv g\|_{L^{2n}(\R^n)}.
\]
Picking $t_3 \in (1-s,1)$ (in particular we may assume $t_3 > 1-t_3$) we can estimate $T_2$ and $T_3$ by Proposition~\ref{pr:pottriebelemb}, namely we get
\[
\brac{\int_{\R^n} \int_{\R^n} \frac{|T_2(x,y)|^{\frac{n}{s}}|}{|x-y|^{2n}}\, dx\, dy}^{\frac{s}{n}}  \aleq \|\lapms{t_2-\frac{1}{2}}\brac{\mathcal{M} \laps{t_2} f |\lapv g|}\|_{L^n} \aleq \|\laps{t_2} f\|_{L^{\frac{n}{t_2}}(\R^n)}\, \|\lapv g\|_{L^{2n}(\R^n)}
\]
and
\[
\brac{\int_{\R^n} \int_{\R^n} \frac{|T_3(x,y)|^{\frac{n}{s}}|}{|x-y|^{2n}}\, dx\, dy}^{\frac{s}{n}}  \aleq 
\|\mathcal{M} \laps{t_2} f \lapms{t_2-\frac{1}{2}}|\lapv g|\|_{L^n} \aleq \|\laps{t_2} f\|_{L^{\frac{n}{t_2}}(\R^n)}\, \|\lapv g\|_{L^{2n}(\R^n)}.
\]
Picking $t_1 \in (s,1)$ we get from Lemma~\ref{la:minguyz} 
\[
\brac{\int_{\R^n} \int_{\R^n} \frac{|T_4(x,y)|^{\frac{n}{s}}|}{|x-y|^{2n}}\, dx\, dy}^{\frac{s}{n}}  \aleq 
\|\lapms{t_2-\frac{1}{2}}\brac{\mathcal{M} \laps{t_2} f |\lapv g|}\|_{L^n} \aleq \|\laps{t_2} f\|_{L^{\frac{n}{t_2}}(\R^n)}\, \|\lapv g\|_{L^{2n}(\R^n)}.
\]
and
\[
\brac{\int_{\R^n} \int_{\R^n} \frac{|T_5(x,y)|^{\frac{n}{s}}|}{|x-y|^{2n}}\, dx\, dy}^{\frac{s}{n}}  \aleq 
\|\mathcal{M} \laps{t_2} f \lapms{t_2-\frac{1}{2}}|\lapv g|\|_{L^n} \aleq \|\laps{t_2} f\|_{L^{\frac{n}{t_2}}(\R^n)}\, \|\lapv g\|_{L^{2n}(\R^n)}.
\]
We conclude by noting that by Sobolev embedding, \ref{pr:sob}, since $t_2 < t$ and $\frac{1}{2} < t$,
\[
\|\lapv f\|_{L^{2n}(\R^n)}\, \|\lapv g\|_{L^{2n}(\R^n)} + \|\laps{t_2} f\|_{L^{\frac{n}{t_2}}(\R^n)}\, \|\lapv g\|_{L^{2n}(\R^n)} \aleq [f]_{W^{t,\frac{n}{t}}(\R^n)}\, [g]_{W^{t,\frac{n}{t}}(\R^n)}.
\]
\end{proof}

In the above arguments we used the following two results
\begin{proposition}\label{pr:last}
Let $n \geq 2$ and set
\[
G(x,y) := \int_{\R^n}   \left ||x-z|^{\frac{1}{2}-n}-|y-z|^{\frac{1}{2}-n} \right |\, |f(x)+f(y)-2f(z)|\, |g(z)| dz.
\]
Then for any $s \in (0,1)$ we have
\[
\brac{\int_{\R^n} \int_{\R^n} \frac{|G(x,y)|^{\frac{n}{s}}}{|x-y|^{2n}}\, dx\, dy }^{\frac{n}{s}} \aleq \|\lapv f\|_{L^{2n}(\R^n)}\, \|g\|_{L^{2n}(\R^n)}.
\]
\end{proposition}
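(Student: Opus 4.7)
\noindent\emph{Proof proposal.} My plan is to reduce the integrand of $G$ pointwise to a sum of Riesz-potential expressions and then recognize the resulting double integral as a Gagliardo/Triebel--Lizorkin seminorm, in the spirit of the argument behind Proposition~\ref{pr:sob1}.

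I would first split $|f(x)+f(y)-2f(z)| \leq |f(x)-f(z)| + |f(y)-f(z)|$; since the kernel $\bigl|\,|x-z|^{1/2-n}-|y-z|^{1/2-n}\,\bigr|$ is symmetric in $x \leftrightarrow y$, it suffices to control the piece $G_1$ with only $|f(x)-f(z)|$. For that factor I would use the pointwise fractional H\"older bound $|f(x)-f(z)| \aleq |x-z|^{1/2}\bigl(\mathcal{M}\lapv f(x)+\mathcal{M}\lapv f(z)\bigr)$ (cf.\ \cite[Proposition 6.6]{S19}). For the kernel difference I would use the elementary interpolation estimate
\[
\bigl|\,|x-z|^{1/2-n}-|y-z|^{1/2-n}\,\bigr| \aleq |x-y|^\theta \min(|x-z|,|y-z|)^{1/2-n-\theta},
\]
valid for any $\theta \in (0,1)$ (coming from $|a^\alpha-b^\alpha|\leq |\alpha|\,|a-b|^\theta \min(a,b)^{\alpha-\theta}$ with $\alpha<0$). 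Combining these and carrying out the $z$-integration against Riesz-type kernels yields a pointwise bound of the form
\[
G(x,y) \aleq |x-y|^\theta \bigl(\mathcal{M}\lapv f(x)\,\lapms{1-\theta}|g|(x) + \lapms{1-\theta}(\mathcal{M}\lapv f\cdot|g|)(x)\bigr) + (x \leftrightarrow y).
\]

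To close the double integral $\int\!\int |G|^{n/s}|x-y|^{-2n}\,dx\,dy$ I would recognize it as a Gagliardo/Triebel--Lizorkin seminorm of the $x$-dependent bracket, mirroring the reduction used in the proof of Proposition~\ref{pr:sob1} via \cite[Lemma 1.2]{S16}. Sobolev embedding (Proposition~\ref{pr:sob}), the fractional Leibniz rule, $L^p$-boundedness of the Hardy--Littlewood maximal function, and finally H\"older's inequality $L^{2n}\cdot L^{2n}\hookrightarrow L^n$ would then yield the desired bound by $\|\lapv f\|_{L^{2n}}\|g\|_{L^{2n}}$.

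The main obstacle will be this last identification step: a na\"ive Fubini estimate $\int |x-y|^{\theta n/s-2n}\,dy$ either diverges at the diagonal (when $\theta \leq s$) or at infinity (when $\theta \geq s$), so one cannot simply factor out $|x-y|^\theta$ pointwise. The argument must exploit the genuine \emph{difference} structure of the kernel $|x-z|^{1/2-n}-|y-z|^{1/2-n}$ (up to the absolute value, which is removed by introducing a sign function at no cost). The point is that $|x|^{1/2-n}$ is, up to a constant, the Riesz kernel of $\lapms{1/2}$, so after the inner $z$-integration one obtains a genuine function-difference $F(x)-F(y)$, with $F$ built from $f$ and $g$. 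Then $\int\!\int|F(x)-F(y)|^{n/s}|x-y|^{-2n}\,dx\,dy$ is precisely $[F]_{W^{s,n/s}}^{n/s}$, and Sobolev embedding together with a fractional Leibniz / commutator estimate for $F$ concludes. Producing the intermediate function $F$ with the right regularity --- essentially $\lapms{1/2}(f \cdot g) \in W^{s,n/s}$ modulo commutator remainders that absorb the $|f(x)+f(y)-2f(z)|$ balance --- is where the bulk of the real work of the proof lies.
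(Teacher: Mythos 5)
You have correctly spotted the central difficulty — that a pointwise bound of the form $G(x,y)\aleq |x-y|^\theta\,H(x)+(x\leftrightarrow y)$ cannot be closed by Fubini against $|x-y|^{-2n}\,dy$ — but the escape route you propose to get around it does not work. You suggest dropping the absolute value $\bigl||x-z|^{\frac12-n}-|y-z|^{\frac12-n}\bigr|$ ``by introducing a sign function at no cost'' so that the inner $z$-integral becomes a genuine difference $F(x)-F(y)$; this fails on two counts. First, the paper's own Remark after Proposition~\ref{pr:rieszpottriebel} flags precisely this point: with the signed kernel $|x-z|^{t-n}-|y-z|^{t-n}$ the estimate is easy (it is $\lapms{t}h(x)-\lapms{t}h(y)$), but the estimate with the absolute value is the nontrivial content, and it requires the Littlewood--Paley argument of Proposition~\ref{pr:rieszpottriebel}; one cannot simply absorb the sign. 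Second, even ignoring the absolute value, the factor $|f(x)+f(y)-2f(z)|$ depends on $x$ and $y$ as well as on $z$, so the $z$-integral is \emph{not} of the form $F(x)-F(y)$ for any single function $F$ of one variable; your proposal's closing sentence acknowledges that ``the bulk of the real work'' lies in absorbing exactly this dependence, but that work is not supplied.

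The paper's actual argument is structurally different and does not pass through any $F(x)-F(y)$ reduction. It sets $X(x,y,z):=|f(x)+f(y)-2f(z)|$, $Y(x,y,z):=\bigl||x-z|^{\frac12-n}-|y-z|^{\frac12-n}\bigr|$, and bounds $X\,Y$ by an explicit case analysis on the relative sizes of $|x-z|$, $|y-z|$, $|x-y|$. In the regimes where $|x-y|\aleq\max\{|x-z|,|y-z|\}$ or where $|x-z|\aeq|y-z|$ it uses the naive triangle inequality on $X$ plus the $|f(a)-f(b)|\aleq|a-b|^t(\mathcal{M}\laps{t}f(a)+\mathcal{M}\laps{t}f(b))$ bound, and closes with Lemma~\ref{la:minguyz}; in the regime $|x-y|\ageq\max\{|x-z|,|y-z|\}$ with $|x-z|\ll|y-z|$ it instead splits $X\leq|f(x)-f(y)|+2|f(y)-f(z)|$, keeping $|f(x)-f(y)|$ paired with the full kernel $Y$, and invokes Proposition~\ref{pr:pottriebelemb}. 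That last pairing — which handles the absolute-valued kernel via Littlewood--Paley — is the nontrivial ingredient absent from your proposal. So the proposal is not a proof: it gives a correct diagnosis of the obstruction but a flawed suggestion for overcoming it, and omits the case decomposition and the kernel estimate that actually do the work.
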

\begin{proof}
We use a combination of the arguments in \cite[Proposition 6.6.]{S19} and \cite[Proposition 6.3.]{S15}.

Let 
\[
X(x,y,z) := |f(x)+f(y)-2f(z)|.
\]
and
\[
Y(x,y,z) := \left ||x-z|^{\frac{1}{2}-n} - |y-z|^{\frac{1}{2}-n} \right |.
\]
We will estimate the product $X(x,y,z)Y(x,y,z)$ depending on the relations between $x,y,z$.

For this we make frequent use of the following estimate, see, e.g., \cite[Proposition 6.6.]{S19}.
\[
|f(x)-f(y)| \aleq |x-y|^t \brac{|\mathcal{M} \laps{t} f(x)| + |\mathcal{M} \laps{t} f(y)|}
\]
\underline{Case 1: $|x-y|\leq \frac{1}{2}|x-z|$ or $|x-y| \leq \frac{1}{2} |y-z|$.}

In this case we have $|x-z| \aeq |y-z|$, and consequently we estimate
\[
\begin{split}
X(x,y,z) \aleq& |f(x)-f(z)| + |f(y)-f(z)|\\
 \aleq& \min\{|x-z|^{\frac{1}{2}},|y-z|^{\frac{1}{2}}\}\ \brac{\mathcal{M} \lapv f(x) + \mathcal{M} \lapv f(y) + \mathcal{M} \lapv f(z)}
\end{split}
\]
On the other hand, we get by the fundamental theorem of calculus,
\[
Y(x,y,z) \aleq |x-y|\, \min\left \{|x-z|^{-\frac{1}{2}-n}, |x-y|^{-\frac{1}{2}-n} \right \}
\]
That is, we get for any $t \in (0,1]$,
\[
X(x,y,z)\, Y(x,y,z) \aleq |x-y|^{t}\, \min\left \{|x-z|^{1-t-n}, |x-y|^{1-t-n} \right \} \, \brac{\mathcal{M} \laps{t_2} f(x) + \mathcal{M} \laps{t_2} f(y) + \mathcal{M} \laps{t_2} f(z)}
\]
Picking any $t \in (s,1)$ (using that $s > \frac{1}{2}$) we obtain from Lemma~\ref{la:minguyz}
\[
\brac{\int_{\R^n} \int_{\R^n} \frac{\left |\int_{\R^n} X(x,y,z)\, Y(x,y,z) \chi_{|x-y| \leq \frac{1}{2}|x-z|}  dz \right |^{\frac{n}{s}}}{|x-y|^{2n}}\, dx\, dy }^{\frac{n}{s}} \aleq \|\lapv f\|_{L^{2n}(\R^n)}\, \|g\|_{L^{2n}(\R^n)}.
\]
and
\[
\brac{\int_{\R^n} \int_{\R^n} \frac{\left |\int_{\R^n} X(x,y,z)\, Y(x,y,z) \chi_{|x-y| \leq \frac{1}{2}|y-z|}  dz \right |^{\frac{n}{s}}}{|x-y|^{2n}}\, dx\, dy }^{\frac{n}{s}} \aleq \|\lapv f\|_{L^{2n}(\R^n)}\, \|g\|_{L^{2n}(\R^n)}.
\]

\underline{Case 2: $|x-y|\ageq \max\{|x-z|,|y-z|\}$ and $|x-z| \aeq |y-z|$.}
In this case we estimate 
\[
\begin{split}
X(x,y,z) \aleq& |f(x)-f(z)| + |f(y)-f(z)|\\
 \aleq& \min\{|x-z|^{\frac{1}{2}},|y-z|^{\frac{1}{2}}\}\ \brac{\mathcal{M} \lapv f(x) + \mathcal{M} \lapv f(y) + \mathcal{M} \lapv f(z)}\\
  \aleq& |x-y|^{\frac{1}{2}}\ \brac{\mathcal{M} \lapv f(x) + \mathcal{M} \lapv f(y) + \mathcal{M} \lapv f(z)}
\end{split}
\]
and use the estimate (for some $t \in (s,1)$),
\[
Y(x,y,z) \aleq |x-z|^{\frac{1}{2}-n} \aleq |x-y|^{t-\frac{1}{2}} |x-z|^{1-t-n} 
\aleq |x-y|^{t-\frac{1}{2}} \min\{|x-z|^{1-t-n}, |y-z|^{1-t-n} \}.
\]
This leads to the same estimate as in Case 1, and thus we obtain also for this case
\[
\brac{\int_{\R^n} \int_{\R^n} \frac{\left |\int_{\R^n} X(x,y,z)\, Y(x,y,z) \chi_{|x-y|\ageq \max\{|x-z|,|y-z|\}} \chi_{|x-z| \aeq |y-z|}  dz \right |^{\frac{n}{s}}}{|x-y|^{2n}}\, dx\, dy }^{\frac{n}{s}} \aleq \|\lapv f\|_{L^{2n}(\R^n)}\, \|g\|_{L^{2n}(\R^n)}.
\]

\underline{Case 3: $|x-y|\ageq \max\{|x-z|,|y-z|\}$ and $|x-z| \leq \frac{1}{2}|y-z|$.}

Then we estimate
\[
X(x,y,z) \leq |f(x)-f(y)| + |x-y|^{\frac{1}{2}} \brac{|\mathcal{M}\lapv f(y)| + |\mathcal{M}\lapv f(z)|}
\]
and thus get the estimate (for some $t \in (s,1)$),
\[
\begin{split}
X(x,y,z) Y(x,y,z) \aleq &
\left |y-z|^{\frac{1}{2}-n} - |x-z|^{\frac{1}{2}-n} \right | |f(x)-f(y)|\\
& + |x-y|^{\frac{1}{2}} \left ||y-z|^{\frac{1}{2}-n} - |x-z|^{\frac{1}{2}-n} \right |\, \mathcal{M} \lapv f(z)\\
& + |x-z|^{1-t-n} |x-y|^{t} \mathcal{M} \lapv f(y)\\
\end{split}
\]
Observe that in view of Proposition~\ref{pr:pottriebelemb} (since $\frac{s}{2} < \frac{1}{2}$),
\[
\begin{split}
&\int_{\R^n} \int_{\R^n} \frac{\left |\int_{\R^n} |f(x)-f(y)|\, \left |y-z|^{\frac{1}{2}-n} - |x-z|^{\frac{1}{2}-n} \right |\, g(z)   dz \right |^{\frac{n}{s}}}{|x-y|^{2n}}\, dx\, dy  \\
&=\int_{\R^n} \int_{\R^n} |f(x)-f(y)|^{\frac{n}{s}} \frac{\left |\int_{\R^n} \, \left |y-z|^{\frac{1}{2}-n} - |x-z|^{\frac{1}{2}-n} \right |\, g(z)   dz \right |^{\frac{n}{s}}}{|x-y|^{2n}}\, dx\, dy\\
\aleq&[f]_{W^{\frac{s}{2},\frac{2n}{s}}(\R^n)}^{\frac{n}{s}}\ \brac{\int_{\R^n} \int_{\R^n} \frac{\left |\int_{\R^n} \, \left |y-z|^{\frac{1}{2}-n} - |x-z|^{\frac{1}{2}-n} \right |\, g(z)   dz \right |^{\frac{2n}{s}}}{|x-y|^{n+\frac{s}{2}\frac{2n}{s}}}\, dx\, dy }^{\frac{1}{2}} \\
\end{split}
\]
With Proposition~\ref{pr:pottriebelemb} we also have (using that $0 < s-\frac{1}{2} < \frac{1}{2}$)
\[
\begin{split}
&\int_{\R^n} \int_{\R^n} \frac{\left |
|x-y|^{\frac{1}{2}} \int_{\R^n}\left ||y-z|^{\frac{1}{2}-n} - |x-z|^{\frac{1}{2}-n} \right |\, \mathcal{M} \lapv f(z)\, g(z)
dz \right |^{\frac{n}{s}}}{|x-y|^{2n}}\, dx\, dy  \\
=&\int_{\R^n} \int_{\R^n} \frac{\int_{\R^n}\left |
\left ||y-z|^{\frac{1}{2}-n} - |x-z|^{\frac{1}{2}-n} \right |\, \mathcal{M} \lapv f(z)\, g(z)
dz \right |^{\frac{n}{s}}}{|x-y|^{n+(s-\frac{1}{2})\frac{n}{s}}}\, dx\, dy  \\
\aleq & \|\mathcal{M} \lapv f\, g\|_{L^n(\R^n)} \aleq \|\lapv f\|_{L^{2n}(\R^n)}\, \|g\|_{L^{2n}(\R^n)}.
\end{split}
\]
Regarding the third term we have
\[
\begin{split}
&\int_{\R^n} \int_{\R^n} \frac{\left |
\int_{\R^n} |x-z|^{1-t-n} |x-y|^{t} \mathcal{M} \lapv f(y)\, g(z)
dz \right |^{\frac{n}{s}}}{|x-y|^{2n}}\, dx\, dy  \\
\aeq &\int_{\R^n} \int_{\R^n} |\mathcal{M} \lapv f(y)|^{\frac{n}{s}}\, |\lapms{1-t} g(x)|^{\frac{n}{s}}
|x-y|^{(t-s)\frac{n}{s}-n}\, dx\, dy  \\
\aeq &\int_{\R^n} \int_{\R^n} \lapms{(t-s)\frac{n}{s}}\brac{|\mathcal{M} \lapv f|^{\frac{n}{s}}}(x)\, |\lapms{1-t} g(x)|^{\frac{n}{s}}\, dx.
\end{split}
\]
By H\"older inequality we then obtain the same estimate as in the arguments above since (observe that $0 < \frac{1}{2}-(t-s) < s$)
\[
\|\lapms{(t-s)\frac{n}{s}}\brac{|\mathcal{M} \lapv f|^{\frac{n}{s}}} \|_{L^{\frac{s}{\frac{1}{2}-(t-s)}}(\R^n)} \aleq 
\|\brac{\mathcal{M} \lapv f}^{\frac{n}{s}} \|_{L^{2s}(\R^n)} \aleq \|\lapv f\|_{L^{2n}(\R^n)}^{\frac{n}{s}},
\]
and
\[
\|\brac{\lapms{1-t} g(x)}^{\frac{n}{s}} \|_{L^{\frac{2s}{2t-1}}(\R^n)} \aeq 
\|\lapms{1-t} g(x) \|_{L^{\frac{2n}{2t-1}}(\R^n)}^{\frac{n}{s}} \aleq \|g\|_{L^{2n}(\R^n)}.
\]

\underline{Case 4: $|x-y|\ageq \max\{|x-z|,|y-z|\}$ and $|y-z| \leq \frac{1}{2}|x-z|$.}

This case is by symmetry the same as Case 3.

\end{proof}

\begin{lemma}\label{la:minguyz}
Let $n \geq 2$, and for $t \in (0,1)$,
\[
G(x,y) := |x-y|^t \int_{\R^n} f(z)\, \min\{|x-z|^{1-t-n},|y-z|^{1-t-n}\}\, dz.
\]
Then whenever $s \in (\frac{t}{2},t)$,
\begin{equation}\label{eq:minguyz:1}
\brac{\int_{\R^n}\int_{\R^n}\frac{|G(x,y)|^{\frac{n}{s}}}{|x-y|^{2n}}\, dx\, dy}^{\frac{s}{n}} \aleq \|f\|_{L^n(\R^n)}.
\end{equation}
Also, if moreover $s > \frac{1}{2}$,
\begin{equation}\label{eq:minguyz:2}
\brac{\int_{\R^n}\int_{\R^n}\frac{|G(x,y)|^{\frac{n}{s}}\, |g(x)|^{\frac{n}{s}}}{|x-y|^{2n}}\, dx\, dy}^{\frac{s}{n}} \aleq \|f\|_{L^{2n}(\R^n)}\, \|g\|_{L^{2n}(\R^n)}.
\end{equation}
\begin{equation}\label{eq:minguyz:3}
\brac{\int_{\R^n}\int_{\R^n}\frac{|G(x,y)|^{\frac{n}{s}}\, |g(y)|^{\frac{n}{s}}}{|x-y|^{2n}}\, dx\, dy}^{\frac{s}{n}} \aleq \|f\|_{L^{2n}(\R^n)}\, \|g\|_{L^{2n}(\R^n)}.
\end{equation}

\end{lemma}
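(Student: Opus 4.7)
The plan is to reduce both inequalities to Hardy--Littlewood--Sobolev (HLS) estimates via a single pointwise bound on $G(x,y)$.

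First, since $1-t-n<0$, one has
\[
\min\{|x-z|^{1-t-n}, |y-z|^{1-t-n}\} = \max\{|x-z|,|y-z|\}^{1-t-n} \leq |x-z|^{(1-t-n)/2}\, |y-z|^{(1-t-n)/2}.
\]
Splitting $|f(z)| = |f(z)|^{1/2}|f(z)|^{1/2}$ and applying Cauchy--Schwarz in $z$ then produces the pointwise bound
\[
G(x,y)\ \aleq\ |x-y|^t\, F(x)^{1/2}\, F(y)^{1/2},\qquad F := \lapms{1-t}|f|.
\]

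For \eqref{eq:minguyz:1}, plugging this bound in reduces the task to estimating
\[
\int_{\R^n}\int_{\R^n} F(x)^{n/(2s)} F(y)^{n/(2s)} |x-y|^{-\beta}\, dx\, dy,\qquad \beta := n(2-t/s).
\]
The hypothesis $s\in(t/2,t)$ ensures $\beta\in(0,n)$, so HLS with conjugate exponents $p=q=2s/t>1$ (satisfying $2/p+\beta/n=2$) bounds this by $C\|F^{n/(2s)}\|_{L^{2s/t}}^2 = C\|F\|_{L^{n/t}}^{n/s}$. A second application of HLS, now to the Riesz potential $\lapms{1-t}\colon L^n\to L^{n/t}$, gives $\|F\|_{L^{n/t}}\aleq \|f\|_{L^n}$, and taking the $s/n$-th root yields \eqref{eq:minguyz:1}.

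For \eqref{eq:minguyz:2} and \eqref{eq:minguyz:3}, note first that \eqref{eq:minguyz:3} follows from \eqref{eq:minguyz:2} by the symmetry $x\leftrightarrow y$, since both $G(x,y)$ and $|x-y|^{-2n}$ are symmetric. Using the same pointwise bound, \eqref{eq:minguyz:2} reduces to
\[
\int\int \brac{F(x)^{n/(2s)}|g(x)|^{n/s}}\, F(y)^{n/(2s)}\, |x-y|^{-\beta}\, dx\, dy.
\]
I would apply HLS in the $y$-variable with exponents $p_1,p_2$ satisfying $1/p_1+1/p_2=t/s$, then H\"older to split $F^{n/(2s)}g^{n/s}$ inside $\|\cdot\|_{L^{p_1}}$. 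The exponents are chosen so that the $g$-factor becomes $\|g\|_{L^{2n}}^{n/s}$ (which forces an exponent of $2s$ on $g$, and is where the hypothesis $s>1/2$ enters), while the two $F$-factors each become $\|F\|_{L^{2n/(2t-1)}}^{n/(2s)}$. Since $s<t$ implies $2t-1>2s-1>0$, HLS gives $\lapms{1-t}\colon L^{2n}\to L^{2n/(2t-1)}$, so $\|F\|_{L^{2n/(2t-1)}}\aleq \|f\|_{L^{2n}}$, and taking the $s/n$-th root closes the estimate.

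The main obstacle is bookkeeping for the HLS exponents: one must simultaneously ensure that $\beta\in(0,n)$, that all H\"older/HLS exponents lie strictly in $(1,\infty)$, and that the targeted Riesz potential mapping is admissible. In particular, the H\"older step in \eqref{eq:minguyz:2} forces a mild lower bound on $s$ in terms of $t$ beyond the stated $s\in(t/2,t)\cap(1/2,\infty)$; however, in the applications to Propositions~\ref{pr:last} and \ref{pr:bigcommie} one is free to take $t$ slightly above $s$, so the required strict inequalities are easily arranged and this suffices for the subsequent arguments.
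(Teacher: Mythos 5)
Your proof of \eqref{eq:minguyz:1} is correct and is essentially the paper's own argument: the paper derives the same pointwise bound $G(x,y)\aleq |x-y|^t\brac{\lapms{1-t}|f|(x)\,\lapms{1-t}|f|(y)}^{1/2}$ (directly from the min rather than via Cauchy--Schwarz) and then writes the bilinear HLS step as $\int\brac{\lapms{(t-s)\frac{n}{2s}}(F^{n/(2s)})}^2$ with the $L^{2s/t}\to L^2$ Riesz-potential bound, which is the same estimate in different notation. Deducing \eqref{eq:minguyz:3} from \eqref{eq:minguyz:2} by symmetry is also fine. The genuine gap is in \eqref{eq:minguyz:2}: by insisting on the symmetric geometric-mean bound you place a copy of $F^{n/(2s)}$ at the same point as $|g|^{n/s}$, and your H\"older/HLS bookkeeping then forces the $x$-factor into $L^{p_1}$ with $1/p_1=\frac{2t-1}{4s}+\frac{1}{2s}=\frac{2t+1}{4s}$. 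Admissibility of bilinear HLS requires $p_1>1$, i.e.\ $4s>2t+1$, i.e.\ $s>\frac{t}{2}+\frac14$, and this is \emph{not} implied by the hypotheses $s\in(\frac{t}{2},t)$, $s>\frac12$: for $t=0.9$, $s=0.55$ all hypotheses hold but $p_1=2.2/2.8<1$, so the step fails. You flag this yourself, but the lemma as stated is then not proven for the full parameter range.

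The fix is to use the min asymmetrically rather than as a geometric mean, which is what the paper does: $G(x,y)\aleq |x-y|^t\,\lapms{1-t}|f|(y)$, keeping the entire Riesz potential at $y$, opposite to where $g$ sits. The left-hand side of \eqref{eq:minguyz:2} to the power $\frac{n}{s}$ is then controlled by $\int F(y)^{n/s}\,\lapms{(t-s)\frac{n}{s}}\brac{|g|^{n/s}}(y)\,dy$ with $F=\lapms{1-t}|f|$, and H\"older with exponents $\frac{2s}{2t-1}$ and $\frac{2s}{1-2(t-s)}$, combined with the mappings $\lapms{1-t}:L^{2n}\to L^{2n/(2t-1)}$ and $\lapms{(t-s)\frac{n}{s}}:L^{2s}\to L^{2s/(1-2(t-s))}$, gives $\|f\|_{L^{2n}}^{n/s}\|g\|_{L^{2n}}^{n/s}$. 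The only constraints are $2s>1$, $2t>1$, $t<2s$ and $t-s<\frac12$, all of which follow from $s>\frac12$ and $\frac{t}{2}<s<t<1$, so no additional hypothesis is needed. Your fallback remark --- that in the applications to Propositions~\ref{pr:last} and~\ref{pr:bigcommie} one may take $t$ close to $s$ so that $t<2s-\frac12$ --- is true, but the statement should, and does, hold as written.
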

\begin{proof}
We can estimate using the representation of the Riesz potential,
\begin{equation}\label{eq:miniguyz:gsplit}
G(x,y) \aleq |x-y|^t\, \min\{ \lapms{1-t} |f|(y),  \lapms{1-t} |f|(x)\}.
\end{equation}
Thus we obtain
\[
\begin{split}
&\int_{\R^n}\int_{\R^n}\frac{|G(x,y)\, |g(x)||^{\frac{n}{s}}}{|x-y|^{2n}}\, dx\, dy \\
\aleq& \int_{\R^n} \int_{\R^n} |x-y|^{(t-s)\frac{n}{s}-n} \brac{\lapms{1-t} |f|(y)}^{\frac{n}{s}}\, |g(x)|^{\frac{n}{s}} dx\, dy\\
\aeq&\int_{\R^n} \brac{\lapms{1-t} |f|(y)}^{\frac{n}{s}}\, \lapms{(t-s)\frac{n}{2s}}\brac{|g|^{\frac{n}{s}}}(y) dy\\
\end{split}
\]
Now observe that $t-s < \frac{1}{2}$ and thus
\[
\|\lapms{(t-s)\frac{n}{2s}} |g|^{\frac{n}{s}} \|_{L^{\frac{2s}{1-2(t-s)}}(\R^n)} \aleq \||g|^{\frac{n}{s}}\|_{L^{2s}(\R^n)} \aeq \|g\|_{L^{2n}(\R^n)}^{\frac{n}{s}},
\]
and
\[
\|\brac{\lapms{1-t} |f|}^{\frac{n}{s}}\|_{L^{\frac{2s}{2t-1}}(\R^n)} \aeq 
\|\lapms{1-t} |f|\|_{L^{\frac{2n}{2t-1}}(\R^n)}^{\frac{n}{s}} \aleq \|f\|_{L^{2n}(\R^n)},
\]
so the estimate \eqref{eq:minguyz:2} follows from H\"older's inequality. Since $G$ is symmetric in $x$ and $y$ we also obtain \eqref{eq:minguyz:3}.

As for \eqref{eq:minguyz:1}, from \eqref{eq:miniguyz:gsplit} we obtain in particular
\[
G(x,y) \aleq |x-y|^t \brac{\lapms{1-t} |f|(x)\, \lapms{1-t} |f|(y)}^{\frac{1}{2}}.
\]
Consequently,
\[
\begin{split}
&\int_{\R^n}\int_{\R^n}\frac{|G(x,y)|^{\frac{n}{s}}}{|x-y|^{2n}}\, dx\, dy \\
\aleq& \int_{\R^n} \int_{\R^n} |x-y|^{(t-s)\frac{n}{s}-n} \brac{\lapms{1-t} |f|(x)}^{\frac{n}{2s}}\, \brac{\lapms{1-t} |f|(y)}^{\frac{n}{2s}} dx\, dy\\
\aeq& \int_{\R^n} \brac{\lapms{(t-s)\frac{n}{2s}}\brac{\brac{ \lapms{1-t} |f|}^{\frac{n}{2s}}}}^2.
\end{split}
\]
Now we have (here we use that $2s > t$)
\[
\begin{split}
\|\lapms{(t-s)\frac{n}{2s}} \brac{\lapms{1-t} |f|}^{\frac{n}{2s}} \|_{L^2(\R^n)} \aleq 
\|\brac{\lapms{1-t} |f|}^{\frac{n}{2s}} \|_{L^{\frac{2s}{t}}(\R^n)} \aleq \|\lapms{1-t} |f|\|_{L^{\frac{n}{t}}(\R^n)}^{\frac{n}{2s}} \aleq \|f\|_{L^n(\R^n)}^{\frac{n}{2s}}.
\end{split}
\]
We conclude.
\end{proof}

Lastly, we used in the proof of Proposition~\ref{pr:bigcommie} and Proposition~\ref{pr:last} the following estimate 
\begin{proposition}
\label{pr:pottriebelemb}
Let
\[
T(h)(x,y) := \int_{\R^n} \left ||x-z|^{t-n} - |y-z|^{t-n}\right | h(z)\, dz.
\]
Then, for $0 < s < t < 1$ and any $q,p \in (1,\infty)$ such that 
\[
s-\frac{n}{q} = t - \frac{n}{p}
\]
we have
\[
\brac{\int_{\R^n}\int_{\R^n} \frac{\left |T(h)(x,y)\right |^q}{|x-y|^{n+sq}}\, dx\, dy}^{\frac{1}{q}} \aleq \|h\|_{L^p(\R^n)}.
\] 
\end{proposition}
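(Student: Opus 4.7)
The plan is to mimic classical proofs of Sobolev embeddings for Riesz-type potentials, via a pointwise decomposition of the kernel $K(x,y,z) := \left||x-z|^{t-n} - |y-z|^{t-n}\right|$, polar coordinates in $y$ about $x$, and a scale-dependent interpolation between two competing bounds. Since $|T(h)|\leq T(|h|)$, I may assume $h\geq 0$.

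Setting $r := |x-y|$, I split $K = K_{close} + K_{far}$. In the close region $\{z : |x-z|\leq 2r$ or $|y-z|\leq 2r\}$, use the trivial bound $K\leq |x-z|^{t-n} + |y-z|^{t-n}$. In the far region $\{z : |x-z|,|y-z|>2r\}$, where $|x-z|\aeq |y-z|$, the mean value theorem gives $K\aleq r\,|x-z|^{t-n-1}$. This produces $T(h)(x,y)\leq T_{far}(h)(x,y)+T_{close}(h)(x,y)$ with
\[
T_{far}(h)(x,y) \aleq r\, F(x,2r), \qquad F(x,\rho) := \int_{|x-z|>\rho}|x-z|^{t-n-1}\, h(z)\, dz,
\]
and $T_{close}(h)(x,y)\aleq A(x,3r) + A(y,3r)$, where $A(x,\rho) := \int_{|x-z|\leq\rho}|x-z|^{t-n}h(z)\, dz$.

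For $T_{far}$, polar coordinates in $y$ about $x$ reduce the key quantity to $\int_{\R^n}T_{far}(h)(x,y)^q|x-y|^{-n-sq}\, dy \aleq \int_0^\infty r^{q(1-s)-1}F(x,r)^q\, dr$. I interpolate two bounds on $F$: a maximal estimate $F(x,r)\aleq r^{t-1}\mathcal{M}h(x)$ (via dyadic decomposition of the integration region, using $t<1$ to converge the tail sum), and a uniform H\"older estimate $F(x,r)\aleq r^{t-1-n/p}\|h\|_{L^p}$ (again via dyadic decomposition and the observation that $t-1-n/p<0$ for any $p>1$). Splitting the $r$-integral at the threshold $R(x) := \brac{\|h\|_{L^p}/\mathcal{M}h(x)}^{p/n}$ and using the algebraic identity $q(t-s)+n = qn/p$ (equivalent to the Sobolev hypothesis) balances the two contributions to $\|h\|_{L^p}^{q-p}\mathcal{M}h(x)^p$. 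Integrating over $x$ and applying the Hardy-Littlewood maximal theorem in $L^p$ (valid since $p>1$) finishes this piece.

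The $T_{close}$ contribution, reduced by symmetry to $A(x,3r)$, is handled analogously, but the large-$r$ bound bifurcates. If $p>n/t$, then H\"older in the definition of $A$ gives $A(x,r)\aleq r^{t-n/p}\|h\|_{L^p}$, and the scale balancing proceeds exactly as for $T_{far}$. If $p\leq n/t$, H\"older diverges near $z=x$; I instead use $A(x,r)\leq c\lapms{t}h(x)$ together with Hardy-Littlewood-Sobolev $\|\lapms{t}h\|_{L^{np/(n-tp)}}\aleq\|h\|_{L^p}$ (valid when $tp<n$). The balanced scale becomes $R(x)\aeq\brac{\lapms{t}h(x)/\mathcal{M}h(x)}^{1/t}$, and the resulting pointwise quantity $\mathcal{M}h(x)^{qs/t}\lapms{t}h(x)^{q(t-s)/t}$ is closed out by a H\"older inequality with exponents $pt/(qs)$ and $pt/(pt-qs)$; the Sobolev relation once more ensures that these are conjugate and that the factors match the available $L^p$ and $L^{np/(n-tp)}$ estimates.

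The main technical obstacle is this scale-optimization: neither bound alone allows the $r$-integral to converge at both endpoints, and only the exact exponent matching $s-n/q=t-n/p$ causes the competing contributions to balance into a quantity controlled by $\|h\|_{L^p}^q$. The identity appears in two disguises, as $q(t-s)+n=qn/p$ for $T_{far}$ and as the conjugate-exponent condition in the final H\"older for $T_{close}$. The borderline case $tp=n$ is not covered by the HLS-based bound; it is handled by replacing the $L^{np/(n-tp)}$-estimate by a BMO or weak-$L^\infty$ estimate for $\lapms{t}h$, which is a minor modification on a measure-zero set of parameters.
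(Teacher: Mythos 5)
Your argument is correct but follows a genuinely different route from the paper. The paper proves this in one line by combining Proposition~\ref{pr:rieszpottriebel} (a Littlewood--Paley decomposition of $h$, with two competing bounds on $T(h_j)$ at each dyadic distance scale $|x-y|\aeq 2^{-k}$, summed via a discrete Schur-type argument to produce the Triebel seminorm $[h]_{\dot{F}^{s-t}_{q,q}}$) with the Sobolev embedding $L^p \hookrightarrow \dot{F}^{s-t}_{q,q}$ from \eqref{eq:triebelsob}. You instead run a classical real-variable Hedberg argument: kernel splitting at scale $|x-y|$, pointwise interpolation between a maximal-function bound and a global bound, optimization of the threshold $R(x)$, and the Hardy--Littlewood maximal theorem plus Hardy--Littlewood--Sobolev. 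Your computations check out: the far-region balance gives $\|h\|_{L^p}^{q-p}\mathcal{M}h(x)^p$ exactly as claimed, and in the near-region case $tp<n$ the exponents $pt/(qs)$ and $pt/(pt-qs)$ are indeed conjugate with $pt-qs=(q-p)(n-tp)/p>0$, matching the HLS exponent $np/(n-tp)$. What the paper's approach buys is uniformity -- the frequency-space argument sees no distinction between $tp<n$, $tp=n$, $tp>n$ -- and it isolates the reusable intermediate statement (Proposition~\ref{pr:rieszpottriebel}) in terms of a negative-order Triebel norm. What yours buys is self-containedness and no Besov/Triebel machinery. The one loose end is the borderline $tp=n$, which does occur for admissible $(s,t,p,q)$ and is not "measure zero" in any sense that excuses it; your BMO suggestion is not the most direct repair (the quantity $A(x,\rho)$ is not pointwise controlled by a BMO norm). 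A cleaner fix within your framework: for $r\geq R$ write $A(x,r)\leq A(x,R)+\int_{R<|x-z|\leq r}|x-z|^{t-n}h(z)\,dz \aleq R^{t}\mathcal{M}h(x)+(\log(r/R))^{1/p'}\|h\|_{L^p}$ by H\"older, and note that $\int_R^\infty r^{-sq-1}(\log(r/R))^{q/p'}\,dr\aeq R^{-sq}$, after which the same optimization closes the estimate. With that patch your proof is complete.
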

\begin{proof}
This follows from Proposition~\ref{pr:rieszpottriebel} together with the Sobolev-inequality for Triebel spaces, \eqref{eq:triebelsob}.
\end{proof}

\section{On Littlewood Paley-Decomposition and Triebel Spaces}\label{s:lp}
We give only a short introduction, for more details we refer to \cite{RS96} or \cite{G14}.

For $j \in \Z$, we denote the $j$-th Littlewood Paley ``projection'' for a function $f$ as $f_j$. This operator can be represented as
\[
f_j(x) := \int_{\R^n} 2^{jn} p(2^j(z-x))\, f(z)\, dz,
\]
for some $p \in \Sw(\R^n)$ whose Fourier transform is supported on an annulus, $\supp \mathcal{F}p \subset B(0,2) \backslash B(0,\frac{1}{2})$. Moreover $p$ can be chosen such that for any $f$,
\[
f_j = \sum_{k=j-1}^{j+1} (f_{j})_k 
\]
which somewhat justifies the term ``projection'' for $f_j$.

\begin{lemma}\label{la:lapmsfj}
For any $p \in (1,\infty)$, $t \geq 0$,
\[
\|\lapms{t} f_j \|_{L^p} \aleq 2^{-jt} \|f_j \|_{L^p}
\]
\end{lemma}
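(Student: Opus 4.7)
The plan is to exploit the key Fourier-localization property: $\widehat{f_j}$ is supported in the annulus $A_j := B(0, 2^{j+1}) \setminus B(0, 2^{j-1})$, so on this support $|\xi|^{-t}$ is a bounded smooth symbol of size $\approx 2^{-jt}$, which lets us represent $\lapms{t} f_j$ as convolution of $f_j$ with a kernel of $L^1$-norm $\lesssim 2^{-jt}$. Young's convolution inequality then finishes the argument without depending on $p \in (1,\infty)$ in any delicate way.

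More concretely, I would pick an auxiliary $\psi \in C_c^\infty(\R^n)$ with $\psi \equiv 1$ on $B(0,2) \setminus B(0,1/2)$ and $\supp \psi \subset B(0,4) \setminus B(0,1/4)$, so that $\widehat{f_j}(\xi) = \psi(2^{-j}\xi)\, \widehat{f_j}(\xi)$. Then define $m(\eta) := |\eta|^{-t}\, \psi(\eta) \in C_c^\infty(\R^n)$, and observe that
\[
|\xi|^{-t}\, \widehat{f_j}(\xi) = 2^{-jt}\, m(2^{-j}\xi)\, \widehat{f_j}(\xi).
\]
Taking inverse Fourier transform, this yields the identity $\lapms{t} f_j = 2^{-jt}\, K_j * f_j$, where $K_j(x) = 2^{jn}\, \check{m}(2^j x)$. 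Since $m \in C_c^\infty(\R^n)$, its inverse Fourier transform $\check{m}$ lies in the Schwartz class, and a simple change of variables gives $\|K_j\|_{L^1(\R^n)} = \|\check{m}\|_{L^1(\R^n)} =: C_t$, a finite constant independent of $j$.

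Applying Young's inequality with $L^1 * L^p \to L^p$ we obtain
\[
\|\lapms{t} f_j\|_{L^p} \leq 2^{-jt}\, \|K_j\|_{L^1}\, \|f_j\|_{L^p} = C_t\, 2^{-jt}\, \|f_j\|_{L^p},
\]
which is exactly the desired estimate. I do not foresee a substantive obstacle here: the only point worth being careful about is that $m$ is genuinely in $C_c^\infty$ (i.e., that the $|\eta|^{-t}$ singularity at the origin is killed by $\psi$), which is ensured by the annular support of $\psi$. The bound is sharp in scaling and independent of $p$.
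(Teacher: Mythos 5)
Your argument is correct and is essentially the same as the paper's: both exploit that $\widehat{f_j}$ lives on a dyadic annulus, so that $|\xi|^{-t}$ acts on it as a rescaled smooth compactly supported multiplier whose kernel has $L^1$-norm $2^{-jt}$ up to a constant, and then both close with Young's inequality. The only cosmetic difference is that the paper factors the Littlewood--Paley symbol itself, writing $\widehat{p}(\eta)=|\eta|^{t}\widehat{q}(\eta)$ and concluding $\lapms{t}f_j = 2^{-jt}q_j * f$, whereas you insert an auxiliary fattened cutoff $\psi$ with $\psi\widehat{f_j}=\widehat{f_j}$ so as to convolve against $f_j$ rather than $f$ directly; your variant lands cleanly on $\|f_j\|_{L^p}$ as stated, while the paper's as-written display needs the reproducing property $f_j=\sum_{k=j-1}^{j+1}(f_j)_k$ (or an equivalent fattening) to replace $\|f\|_{L^p}$ by $\|f_j\|_{L^p}$. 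Both routes are standard and equivalent in substance.
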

\begin{proof}
Since the Fourier-transform of $p$ is supported on an annulus, we can write $p = \laps{t} q$ for some Schwartz function $q$, with Fourier transform $\mathcal{F} q = |\xi|^{-s} \mathcal{F} p$. Consequently,
\[
\lapms{t} f_j  = 2^{-jt}\, q_j \ast f,
\]
for 
\[
q_j(x) = 2^{jn} q \brac{2^jx}.
\]
We obtain the desired inequality now by Young's inequality for convolutions.
\end{proof}

For $s \in \R$ the (homogeneous) Triebel- or Besov-spaces $B^s_{p,p} (\R^n) \equiv F^{s}_{p,p}(\R^n)$ are given by
\begin{equation}\label{eq:triebel}
[f]_{\dot{F}^s_{p,p}(\R^n)} = \brac{\sum_{j \in \Z} 2^{jsp} \|f_j\|_{L^p(\R^n)}^p}^{\frac{1}{p}}
\end{equation}
For $s \in (0,1)$ we have an identification with the fractional Sobolev space, namely
\begin{equation}\label{eq:wspident}
[f]_{\dot{F}^s_{p,p}(\R^n)} \aeq [f]_{W^{s,p}(\R^n)} \quad \mbox{for $s \in (0,1)$, $p \in (1,\infty)$}.
\end{equation}
As a consequence of Sobolev embedding for Triebel spaces, \cite{J77}, we have 
\begin{equation}\label{eq:triebelsob}
[f]_{\dot{F}^s_{p,p}(\R^n)} \aleq \|f\|_{L^q(\R^n)} \quad  \mbox{for $s < 0$, $p = \frac{nq}{n-sq}$ if $p,q \in (1,\infty)$}
\end{equation}

\begin{proposition}\label{pr:rieszpottriebel}
Let
\[
T(h)(x,y) := \int_{\R^n} \left ||x-z|^{t-n} - |y-z|^{t-n}\right | h(z)\, dz.
\]
Then, for $0 < s < t < 1$ and any $p \in (1,\infty)$,
\[
\brac{\int_{\R^n}\int_{\R^n} \frac{\left |T(h)(x,y)\right |^p}{|x-y|^{n+sp}}\, dx\, dy}^{\frac{1}{p}} \aleq [h]_{\dot{F}^{s-t}_{p,p}(\R^n)}
\] 
\end{proposition}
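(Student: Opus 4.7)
The plan is to combine a sharp pointwise kernel estimate with a Littlewood--Paley decomposition of $h$, modelled on the observation that removing the absolute value inside $T$ turns it into the Riesz-potential difference $c(\lapms{t}h(x)-\lapms{t}h(y))$; for that model case the weighted $L^p$-norm on the left equals $c[\lapms{t}h]_{W^{s,p}(\R^n)}$, which is comparable to $[h]_{\dot F^{s-t}_{pp}}$ by the standard Sobolev/Triebel identification. The essence of the proof is therefore to show that the absolute value around the kernel does not spoil this comparison.

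First, I would establish the pointwise kernel estimate
\[
\bigl||x-z|^{t-n}-|y-z|^{t-n}\bigr|\aleq |x-y|^{\alpha}(\min\{|x-z|,|y-z|\})^{t-n-\alpha},\quad \alpha\in[0,1].
\]
This is proved by splitting into $|x-y|\leq \tfrac{1}{2}\min\{|x-z|,|y-z|\}$, where the mean value theorem gives $|x-y|(\min)^{t-n-1}$ and $|x-y|^{1-\alpha}\leq(\min)^{1-\alpha}$ absorbs the residual power, and its complement, where one of the two distances is comparable to $|x-y|$ so the triangle bound $|x-z|^{t-n}+|y-z|^{t-n}$ suffices after inserting $|x-z|^\alpha\aleq|x-y|^\alpha$. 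Integrating against $|h|$ yields, for $\alpha\in[0,t)$,
\[
T(h)(x,y)\aleq |x-y|^\alpha\bigl(\lapms{t-\alpha}|h|(x)+\lapms{t-\alpha}|h|(y)\bigr),
\]
and in particular the crude bound $T(h)(x,y)\aleq \lapms{t}|h|(x)+\lapms{t}|h|(y)$.

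Next, decompose $h=\sum_{j\in\Z}h_j$ along a Littlewood--Paley scheme. Since the kernel $K(x,y,z):=\bigl||x-z|^{t-n}-|y-z|^{t-n}\bigr|$ is non-negative and independent of $h$, the operator $T$ is linear in $h$ and $T(h)=\sum_j T(h_j)$. For each $j$, apply the first pointwise estimate with some $\alpha\in(s,t)$ on $\{|x-y|\leq 2^{-j}\}$ and the crude version on $\{|x-y|>2^{-j}\}$; using the frequency localization of $h_j$ (Lemma~\ref{la:lapmsfj}) to get $\|\lapms{\sigma}h_j\|_{L^p}\aleq 2^{-j\sigma}\|h_j\|_{L^p}$, a direct integration in $y$ followed by the rescaling $w=2^j(y-x)$ should produce the per-scale bound
\[
\brac{\int_{\R^n}\int_{\R^n} \frac{|T(h_j)(x,y)|^p}{|x-y|^{n+sp}}\,dx\,dy}^{1/p}\aleq 2^{j(s-t)}\|h_j\|_{L^p}.
\]

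The main obstacle lies in the reassembly step: a naive application of Minkowski's inequality gives only the $\ell^1$-sum $\sum_j 2^{j(s-t)}\|h_j\|_{L^p}$, whereas the Triebel characterization $[h]_{\dot F^{s-t}_{pp}}^p\aeq\sum_j 2^{j(s-t)p}\|h_j\|_{L^p}^p$ is an $\ell^p$-sum of the same quantities. To recover $\ell^p$-summability one must exploit that $T(h_j)/|x-y|^{n/p+s}$ is essentially concentrated at the spatial scale $|x-y|\sim 2^{-j}$, so the dyadic pieces are almost orthogonal in the weighted space $L^p(dx\,dy,\,|x-y|^{-n-sp})$; equivalently, one introduces an auxiliary Littlewood--Paley decomposition in the $(x-y)$-variable and invokes a Plancherel--P\'olya square-function estimate. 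A secondary technical point is that $|h_j|$ is not frequency-localized; this is handled by bounding $|h_j|$ pointwise by a Schwartz-tail average of $|h|$ at scale $2^{-j}$ and using Hedberg-type inequalities to control the ensuing Riesz potentials.
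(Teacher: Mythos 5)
Your ingredients are the right ones, and you correctly diagnose the crux: a per-scale bound of the form
\[
\brac{\int_{\R^n}\int_{\R^n}\frac{|T(h_j)(x,y)|^p}{|x-y|^{n+sp}}\,dx\,dy}^{1/p}\aleq 2^{j(s-t)}\|h_j\|_{L^p}
\]
only gives the $\ell^1_j$-sum of $2^{j(s-t)}\|h_j\|_{L^p}$ under Minkowski, while the target $[h]_{\dot F^{s-t}_{p,p}}$ is the $\ell^p_j$-sum. But the proposal stops there: it acknowledges the gap and gestures at ``near-orthogonality'' and a ``Plancherel--P\'olya square-function estimate,'' without carrying out the step. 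That is a genuine missing piece, and it is precisely the nontrivial part of the proposition.

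What the paper actually does is more concrete and does not invoke square functions at all. One introduces a \emph{second} dyadic index $k$ by restricting to annuli $|x-y|\aeq 2^{-k}$ and sets
\[
\tilde I_{j,k}:=\brac{\int_{\R^n}\int_{\R^n}\chi_{|x-y|\aeq 2^{-k}}\,\frac{|T(h_j)(x,y)|^p}{|x-y|^{n+sp}}\,dx\,dy}^{1/p}.
\]
Using the two crude pointwise bounds $T(h_j)\aleq \lapms{t}|h_j|(x)+\lapms{t}|h_j|(y)$ and $T(h_j)\aleq |x-y|^t\bigl(\mathcal M h_j(x)+\mathcal M h_j(y)\bigr)$ together with Lemma~\ref{la:lapmsfj}, one gets \emph{two} estimates with off-diagonal decay,
\[
\tilde I_{j,k}\aleq 2^{(k-j)s}\,2^{j(s-t)}\|h_j\|_{L^p},\qquad
\tilde I_{j,k}\aleq 2^{(j-k)(t-s)}\,2^{j(s-t)}\|h_j\|_{L^p},
\]
so that $\tilde I_{j,k}$ decays geometrically in $|j-k|$. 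The passage from $\ell^1_j$ to $\ell^p_j$ is then achieved not by orthogonality, but by a duality trick: expand $|T(h)|^p=|T(h)|^{p-1}\sum_j|T(h_j)|$, restrict to the annuli in $k$, apply H\"older to peel off $\|T(h)\|^{p-1}$, and reduce to bounding $\sum_k\bigl(\sum_j\tilde I_{j,k}\bigr)^p$. Splitting the inner sum at $j=k$, applying Jensen's inequality with respect to the geometrically decaying weights, and Fubini in $(j,k)$ then gives exactly $\sum_j\bigl(2^{j(s-t)}\|h_j\|_{L^p}\bigr)^p\aeq[h]_{\dot F^{s-t}_{p,p}}^p$. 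Your proposal would become a proof once you (i) introduce this second dyadic index $k$ explicitly, (ii) prove \emph{two} complementary bounds for $\tilde I_{j,k}$ rather than a single per-scale bound at the diagonal $j=k$, and (iii) replace the vague square-function appeal with the H\"older duality plus Jensen--Fubini argument (a discrete Schur test). Without those three items the argument does not close.
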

\begin{remark}
Observe that the estimate is much easier if the absolute value
\[
\left ||x-z|^{t-n} - |y-z|^{t-n}\right | 
\]
is replaced by 
\[
\brac{|x-z|^{t-n} - |y-z|^{t-n}} 
\]
Indeed, in that case $T(h)(x,y) = c\brac{\lapms{t} h(x) -\lapms{t} h(y)}$ and thus
\[
\brac{\int_{\R^n}\int_{\R^n} \frac{\left |T(h)(x,y)\right |^p}{|x-y|^{n+sp}}\, dx\, dy}^{\frac{1}{p}} \aeq [\lapms{s} h]_{W^{t,p}(\R^n)}.
\] 

\end{remark}

\begin{proof}[Proof of Proposition~\ref{pr:rieszpottriebel}]
We follow an argument similar to \cite[Proof of Lemma~1.2]{S16}.
We will use two estimates for $T$. Firstly,
\[
T(h)(x,y) \aleq \lapms{t} h(x) + \lapms{t} h(y).
\]
Secondly, as in \cite[Lemma 6.7.]{S19}, (here $t <1$ is used)
\[
T(h)(x,y) \aleq |x-y|^t \brac{\mathcal{M}h(x) + \mathcal{M}h(y)}
\]
Let $h_j$ be the $j$th Littlewood-Paley projection of $h$. Then the above estimates lead to two estimates for
\[
\tilde{I}_{j,k} := \brac{\int_{\R^n} \int_{\R^n} \chi_{|x-y| \aeq 2^{-k}} \frac{|T(h_j)(x,y)|^p}{|x-y|^{n+sp}} \, dx\, dy}^{\frac{1}{p}}.
\]
From the first estimate we get, in view of Lemma~\ref{la:lapmsfj},
\begin{equation}\label{eq:Ijk1}
\tilde{I}_{j,k} \aleq 2^{ks} \|\lapms{t} h_j\|_{L^p(\R^n)} \aleq 2^{ksp} 2^{-jt} \|h_j\|_{L^p(\R^n)} = 2^{(k-j)s}\  2^{j(s-t)} \|h_j\|_{L^p(\R^n)}.
\end{equation}
From the second estimate we get
\begin{equation}\label{eq:Ijk2}
\tilde{I}_{j,k} \aleq 2^{-k(t-s)} \|\mathcal{M} h_j \|_{L^p(\R^n)} \aleq 
2^{(j-k)(t-s)}\  2^{j(s-t)} \|h_j \|_{L^p(\R^n)}
\end{equation}
Now we have
\[
\begin{split}
&\int_{\R^n} \int_{\R^n} \frac{|T(h)(x,y)|^p}{|x-y|^{n+sp}} \, dx\, dy\\
\aleq &
\sum_{k,j \in \Z} \int_{\R^n} \int_{\R^n} \chi_{|x-y| \aeq 2^{-k}} \frac{|T(h)(x,y)|^{p-1}\, |T(h_j)(x,y)|}{|x-y|^{n+sp}} \, dx\, dy\\
\aleq &
\sum_{k \in \Z} \brac{ 
\brac{\int_{\R^n} \int_{\R^n} \chi_{|x-y| \aeq 2^{-k}} \frac{|T(h)(x,y)|^{p}}{|x-y|^{n+sp}} \, dx\, dy}^{\frac{p-1}{p}}
\sum_j \brac{\int_{\R^n} \int_{\R^n} \chi_{|x-y| \aeq 2^{-k}} \frac{|T(h_j)(x,y)|^{p}}{|x-y|^{n+sp}} \, dx\, dy}^{\frac{1}{p}}}\\
\aleq&
\brac{\int_{\R^n} \int_{\R^n} \frac{|T(h)(x,y)|^{p}}{|x-y|^{n+sp}} \, dx\, dy}^{\frac{p-1}{p}}
\brac{\sum_k \brac{\sum_j \brac{\int_{\R^n} \int_{\R^n} \chi_{|x-y| \aeq 2^{-k}} \frac{|T(h_j)(x,y)|^{p}}{|x-y|^{n+sp}} \, dx\, dy}^{\frac{1}{p}}}^p}^{\frac{1}{p}}\\
\end{split}
\]
That is,
\[
\int_{\R^n} \int_{\R^n} \frac{|T(h)(x,y)|^p}{|x-y|^{n+sp}} \, dx\, dy
\aleq 
\sum_k \brac{\sum_j \tilde{I}_{j,k}}^p \aleq \sum_{k \in \Z} \brac{\sum_{j \geq k} \tilde{I}_{j,k}}^p + \sum_{k \in \Z} \brac{\sum_{j \leq k} \tilde{I}_{j,k}}^p 
\]
Applying \eqref{eq:Ijk1}, then Jensen's inequality, then Fubini we find
\[
\begin{split}
\sum_{k \in \Z} \brac{\sum_{j \geq k} \tilde{I}_{j,k}}^p \aleq &
\sum_{k \in \Z} \brac{\sum_{j \geq k} 2^{(k-j)s}\  2^{j(s-t)} \|h_j\|_{L^p(\R^n)} }^p\\
\aleq &\sum_{k \in \Z} \sum_{j \geq k} 2^{(k-j)s}\  \brac{2^{j(s-t)} \|h_j\|_{L^p(\R^n)} }^p\\
=&\sum_{j \in \Z} \brac{2^{j(s-t)} \|h_j\|_{L^p(\R^n)} }^p \sum_{k \leq j} 2^{(k-j)s}\  \\
\aeq &\sum_{j \in \Z} \brac{2^{j(s-t)} \|h_j\|_{L^p(\R^n)} }^p \aeq [h]_{\dot{F}^{s-t}_{p,p}}^p.
\end{split}
\]
In the last step we used the identification of Triebel spaces \eqref{eq:triebel}.

In the same spirit, from \eqref{eq:Ijk2}, (here we use that $s < t$)
\[
\begin{split}
\sum_{k \in \Z} \brac{\sum_{j \leq k} \tilde{I}_{j,k}}^p \aleq &
\sum_{k \in \Z} \brac{\sum_{j \leq k} 2^{(j-k)(t-s)}\  2^{j(s-t)} \|h_j \|_{L^p(\R^n)}}^p \\
\aleq & \sum_{k \in \Z} \sum_{j \leq k} 2^{(j-k)(t-s)}\  \brac{2^{j(s-t)} \|h_j \|_{L^p(\R^n)}}^p\\
\aeq&[h]_{\dot{F}^{s-t}_{p,p}}^p.
\end{split}
\]
The claim follows.
\end{proof}

\section{Estimates on kernels}

\begin{lemma}\label{la:xyz1}
Let $n \geq 2$. We have for any $t_1,t_2 \in (0,1)$,
\[
||x-z|^{1-n}-|y-z|^{1-n}| \aleq |x-y|^{1-t_1} \,||x-z|^{t_1-n}-|y-z|^{t_1-n}| + |x-y|^{1-t_2}\, \min\{|x-z|^{t_2-n},|y-z|^{t_2-n}\}
\]
\end{lemma}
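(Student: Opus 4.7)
The plan is an elementary case analysis based on the relative sizes of
\[
a := |x-z|, \qquad b := |y-z|, \qquad c := |x-y|.
\]
Both sides of the asserted inequality are symmetric in the pair $(x,y)$, so I will assume $a \leq b$; since $t_2 - n < 0$, this pins down $\min\{a^{t_2-n},b^{t_2-n}\} = b^{t_2-n}$. The estimate splits according to whether $b \leq 2a$ (the comparable regime, where the second RHS term alone suffices) or $b > 2a$ (the separated regime, where the first RHS term alone suffices). The case $a > b$ then follows by symmetry.

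In the comparable regime $b \leq 2a$, I have $a \aeq b$. If moreover $c \leq a$, the mean value theorem applied to $r \mapsto r^{1-n}$ on $[a,b]$ gives
\[
|a^{1-n}-b^{1-n}| \leq (n-1)\,a^{-n}\,|a-b| \leq (n-1)\,a^{-n}\,c,
\]
and since $c \leq a$ forces $c = c^{1-t_2}c^{t_2} \leq c^{1-t_2}a^{t_2}$, the right side is bounded by $c^{1-t_2}a^{t_2-n} \aeq c^{1-t_2}b^{t_2-n}$, matching the second RHS term. If instead $c > a$, I use the crude bound $|a^{1-n}-b^{1-n}| \leq a^{1-n}$ and then $a \leq c$ with $1-t_2 > 0$ gives $a^{1-n} = a^{1-t_2}a^{t_2-n} \leq c^{1-t_2}a^{t_2-n} \aeq c^{1-t_2}b^{t_2-n}$.

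In the separated regime $b > 2a$, the triangle inequality yields $c \geq b - a > b/2$ and $c \leq b + a \leq 3b/2$, so $c \aeq b$ and in particular $a < b/2 < c$. The gap $b > 2a$ combined with $t_1 - n < 0$ gives $b^{t_1-n} \leq 2^{t_1-n}a^{t_1-n}$, hence
\[
|a^{t_1-n}-b^{t_1-n}| \geq (1 - 2^{t_1-n})\,a^{t_1-n},
\]
so the first RHS term is at least a constant times $c^{1-t_1}a^{t_1-n}$; using $c \geq a$ and $1-t_1 > 0$, I obtain $c^{1-t_1}a^{t_1-n} \geq a^{1-t_1}a^{t_1-n} = a^{1-n} \geq |a^{1-n}-b^{1-n}|$.

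The result is elementary once the right cases are chosen; there is no deep obstacle. The only conceptual point is recognizing the complementary roles of the two RHS terms: in the comparable regime the factors $a^{t_2-n}$ and $b^{t_2-n}$ are of the same order, so the second term is tight; in the separated regime the second term only captures something of order $b^{1-n}$ whereas the LHS is of order $a^{1-n} \gg b^{1-n}$, and the first term with its factor $a^{t_1-n}$ (coming from the smaller of $a,b$) is required.
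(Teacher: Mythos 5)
Your proof is correct and follows essentially the same case analysis as the paper: both distinguish the comparable regime $|x-z|\aeq|y-z|$, where the second right-hand term controls the left side (via the mean value theorem when $|x-y|$ is small, or the crude bound by $\min\{|x-z|,|y-z|\}^{1-n}$ otherwise), from the separated regime $|x-z|\not\aeq|y-z|$, where the first term does the work. Your version is a bit more quantitative about constants and packages the sub-cases slightly differently (the paper first isolates $|x-y|\ll\min\{|x-z|,|y-z|\}$ before treating the rest), but the underlying decomposition and estimates are the same.
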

\begin{proof}
If $|x-y| < \frac{1}{2}|x-z|$ or $|x-y| < \frac{1}{2} |y-z|$ we have
\[
||x-z|^{1-n}-|y-z|^{1-n}| \aleq |x-y|^{1-t} \, \min\{|x-z|^{t-n},|y-z|^{t-n}\}
\]
So we may assume from now on that $|x-z|, |y-z| \ageq |x-y|$.
If $|x-z| \aeq |y-z|$ then the claim follows. If $|x-z| \ll |y-z|$ we have that 
\[
||x-z|^{1-n}-|y-z|^{1-n}| \aeq |x-z|^{1-n}
\]
and
\[
||x-z|^{t-n}-|y-z|^{t-n}| \aeq |x-z|^{t-n}
\]
so also in this case the claim follows.
The same argument works also for $|y-z| \ll |x-z|$.
\end{proof}

\begin{lemma}\label{la:kxyz3}
For any $t \in [0,1]$, $m \geq 2$, Then for almost all $x,y,z \in \R^n$,
\[
\left | \frac{x-z}{|x-z|^m} - \frac{y-z}{|y-z|^m} \right | \aleq \left | |x-z|^{1-t-m} - |y-z|^{1-t-m} \right | + |x-y|^t\, \min\{|x-z|^{1-m},|y-z|^{1-m}\}.
\]
\end{lemma}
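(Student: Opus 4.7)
The plan is to prove the inequality by exploiting the polar decomposition $\frac{v}{|v|^m} = \frac{v}{|v|} \cdot |v|^{1-m}$, which separates the vector $\frac{x-z}{|x-z|^m} - \frac{y-z}{|y-z|^m}$ into an ``angular'' piece (rotation of the unit vector) and a ``radial'' piece (change in length). Writing $a = x-z$ and $b = y-z$ throughout, I would use the algebraic identity
\[
\frac{a}{|a|^m} - \frac{b}{|b|^m} = \left(\frac{a}{|a|} - \frac{b}{|b|}\right)|b|^{1-m} + \frac{a}{|a|}\bigl(|a|^{1-m} - |b|^{1-m}\bigr),
\]
together with the symmetric version obtained by swapping $a$ and $b$. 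Taking norms and the minimum of the two bounds yields
\[
\left|\frac{a}{|a|^m} - \frac{b}{|b|^m}\right| \leq \left|\frac{a}{|a|} - \frac{b}{|b|}\right| \cdot \min\{|a|^{1-m}, |b|^{1-m}\} + \bigl||a|^{1-m} - |b|^{1-m}\bigr|,
\]
which already produces a radial ``difference'' term of the form appearing on the right-hand side of the claim.

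For the angular factor, I would record the elementary identity $a|b| - b|a| = (a-b)|b| + b(|b|-|a|)$ (and its swap), which yields $\bigl|\tfrac{a}{|a|} - \tfrac{b}{|b|}\bigr| \leq 2|a-b|/\max\{|a|,|b|\}$. Combining with the trivial bound $\leq 2$, one has $\bigl|\tfrac{a}{|a|} - \tfrac{b}{|b|}\bigr| \leq 2\min\{1, |a-b|/\max\{|a|,|b|\}\}$; interpolating via $\min\{1,r\} \leq r^t$ for $t \in [0,1]$ gives the H\"older-type estimate
\[
\left|\frac{a}{|a|} - \frac{b}{|b|}\right| \leq 2\,|a-b|^t\, \max\{|a|,|b|\}^{-t} \quad \text{for every } t \in [0,1].
\]

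Substituting into the first estimate produces a factor $|a-b|^t\, \max\{|a|,|b|\}^{-t} \cdot \min\{|a|^{1-m},|b|^{1-m}\}$. The final bookkeeping step, which I expect to be the main technical obstacle, consists of rewriting this combined factor in the exponent form that appears in the statement: since $1-m<0$ the minimum $\min\{|a|^{1-m},|b|^{1-m}\}$ is attained at $\max\{|a|,|b|\}$, and similarly the factor $\max\{|a|,|b|\}^{-t}$ is attained at $\max$, so the product equals $\max\{|a|,|b|\}^{1-t-m}$, which can then be distributed across the two summands of the claim. Checking the scaling under $(x,y,z)\mapsto(\lambda x,\lambda y,\lambda z)$ is a good sanity check to keep the exponents honest throughout. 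The case-splits ``$|a| \approx |b|$'' vs.\ ``$|a| \ll |b|$ (or vice versa)'' do not need to be made explicit in this polar-decomposition approach, since the angular estimate automatically degenerates to the trivial bound in the second regime.
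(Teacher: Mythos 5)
Your proposal is correct and arrives at the lemma by a genuinely different route than the paper, but first a clarification on the target: the statement as printed has a typo. Scaling $(x,y,z)\mapsto(\lambda x,\lambda y,\lambda z)$ shows the three terms scale as $\lambda^{1-m}$, $\lambda^{1-t-m}$ and $\lambda^{1+t-m}$ respectively, so the printed inequality cannot hold with a uniform constant. What you in fact prove --- and what the paper's own proof establishes and what is used in the application inside Proposition~\ref{pr:bigcommie} --- is the dimensionally consistent version
\[
\left|\frac{x-z}{|x-z|^m}-\frac{y-z}{|y-z|^m}\right| \aleq \bigl||x-z|^{1-m}-|y-z|^{1-m}\bigr| + |x-y|^t\min\{|x-z|^{1-t-m},|y-z|^{1-t-m}\},
\]
i.e. with the $t$ sitting in the exponent of the $\min$ term rather than the difference term. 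Your closing remark that the factor $\max\{|a|,|b|\}^{1-t-m}$ ``can then be distributed across the two summands'' is the one soft spot: it belongs entirely to the second summand, and the first summand keeps the exponent $1-m$. Everything up to that point is exact, so this is a phrasing slip rather than a gap.

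As for the comparison of methods: you decompose $\frac{a}{|a|^m}-\frac{b}{|b|^m}$ algebraically into radial plus angular parts via the identity $\frac{a}{|a|^m}-\frac{b}{|b|^m}=(\frac{a}{|a|}-\frac{b}{|b|})|b|^{1-m}+\frac{a}{|a|}(|a|^{1-m}-|b|^{1-m})$, symmetrize, and then interpolate the angular factor through $\bigl|\frac{a}{|a|}-\frac{b}{|b|}\bigr|\le 2\min\{1,|a-b|/\max\{|a|,|b|\}\}\le 2|a-b|^t\max\{|a|,|b|\}^{-t}$. The paper instead does a case split: when $|x-z|\le\frac12|y-z|$ (or symmetrically) both sides are comparable to the smaller of $|x-z|^{1-m}$, $|y-z|^{1-m}$ and the first summand carries the estimate; when $|x-z|\aeq|y-z|$ one has $|x-y|\aleq|x-z|$ and the fundamental theorem of calculus gives $|x-y|\max\{|x-z|,|y-z|\}^{-m}$, which is then weakened to $|x-y|^t\min\{|x-z|^{1-t-m},|y-z|^{1-t-m}\}$. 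Both arguments are short; yours avoids the case distinction, produces the explicit constant $2$, and makes the interpolation in $t$ a one-line fact, while the paper's version is more transparent about which geometric regime activates which summand. Either way, the conclusion is the corrected inequality above, not the literal statement.
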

\begin{proof}
First we treat the \underline{case $|x-z| \leq \frac{1}{2} |y-z|$}, the case $|y-z| \leq \frac{1}{2} |x-z|$ is analogous.

In this case, we have
\[
\left | \frac{x-z}{|x-z|^m} - \frac{y-z}{|y-z|^m} \right | \aeq |x-z|^{1-n}
\]
and
\[
\left | |x-z|^{1-m} - |y-z|^{1-m} \right | \aeq |x-z|^{1-m}
\]
so the claim follows.

In the remaining case we have $|x-z| \aeq |y-z|$, which implies that 
\[
|x-y| \aleq |x-z|, \quad |x-y| \aleq |y-z|.
\]
In this case we get by the fundamental theorem of calculus,
\[
\left | \frac{x-z}{|x-z|^m} - \frac{y-z}{|y-z|^m} \right | \aleq |x-y| \max\{|x-z|^{-m},|y-z|^{-m}\} \aleq |x-y|^t \min\{|x-z|^{1-t-m},|y-z|^{1-t-m}\}.
\]
\end{proof}

\section{On harmonic functions}
The following is a well-known result on harmonic functions whose argument we repeat for the convenience of the reader.
\begin{lemma}\label{la:harmonic1}
Let $f$ be harmonic in $B(0,1) \subset \R^n$, then for any $K \subset \subset B$ we have a constants $C_1(K)$, $C_2(K) > 0$ so that 
\[
\sup_{K} f \leq C_1(K) \int_{B(0,1)} f_+ - C_2(K) \int_{B(0,1)} f_-
\]
where we use the notation $f_+ = \max\{f,0\}$ and $f_- = -\min\{f,0\}$.
\end{lemma}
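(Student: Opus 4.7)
I would prove this by combining the Poisson integral formula with the subharmonicity of the negative part $f_-$. The constants $C_1, C_2$ will come out of bounds on the Poisson kernel on an annulus of $B(0,1)$.

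\textbf{Step 1 (Poisson representation on shells).} Since $K \subset\subset B(0,1)$, fix $\rho \in (0,1)$ with $K \subset \overline{B(0,\rho)}$ and choose $\rho' \in (\rho,1)$. For $x \in K$ and $r \in [\rho',1)$, the Poisson integral formula applied to $f$ on $B(0,r)$ yields
\[
f(x) = \int_{\partial B(0,r)} P_r(x,y)\, f(y)\, d\sigma(y), \quad P_r(x,y) = \frac{r^2-|x|^2}{n\omega_n r\,|x-y|^n}.
\]
For $x \in K$ and $y \in \partial B(0,r)$ we have $|x-y| \in [\rho'-\rho, 1+\rho]$ and $r^2 - |x|^2 \geq (\rho')^2 - \rho^2 > 0$, so there exist positive constants $c_1 = c_1(\rho,\rho',n)$ and $c_2 = c_2(\rho,\rho',n)$, uniform in $r \in [\rho',1)$, with $c_1 \leq P_r(x,y) \leq c_2$.

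\textbf{Step 2 (annular inequality).} Splitting $f = f_+ - f_-$ and using the one-sided bounds on $P_r$ gives
\[
f(x) \leq c_2 \int_{\partial B(0,r)} f_+\, d\sigma - c_1 \int_{\partial B(0,r)} f_-\, d\sigma.
\]
Integrating this inequality in $r$ from $\rho'$ to $1$ (and using $\int_{\rho'}^1 \int_{\partial B(0,r)} g\, d\sigma\, dr = \int_A g\, dy$ where $A = B(0,1) \setminus B(0,\rho')$) produces
\[
(1-\rho')\, f(x) \leq c_2 \int_A f_+ \,dy - c_1 \int_A f_-\, dy.
\]

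\textbf{Step 3 (converting the annular negative integral to the full-ball one).} The key observation is that $f_- = \max\{-f,0\}$ is subharmonic on $B(0,1)$ because it is the positive part of the harmonic function $-f$. Consequently $h(r) := \fint_{\partial B(0,r)} f_-\, d\sigma$ is nondecreasing in $r$, and therefore so is $\phi(r) := r^{n-1} h(r) \cdot |S^{n-1}|$, which equals $\int_{\partial B(0,r)} f_- d\sigma$. Using monotonicity of $\phi$ on $[0,1]$,
\[
\int_{B(0,\rho')} f_- = \int_0^{\rho'} \phi(r)\, dr \leq \rho'\, \phi(\rho') \quad \text{and} \quad \int_A f_- = \int_{\rho'}^1 \phi(r)\, dr \geq (1-\rho')\, \phi(\rho'),
\]
which together yield $\int_{B(0,\rho')} f_- \leq \frac{\rho'}{1-\rho'} \int_A f_-$ and hence $\int_A f_- \geq (1-\rho') \int_{B(0,1)} f_-$.

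\textbf{Step 4 (conclusion).} Combined with the trivial $\int_A f_+ \leq \int_{B(0,1)} f_+$, Steps 2 and 3 give
\[
(1-\rho')\, f(x) \leq c_2 \int_{B(0,1)} f_+ - c_1(1-\rho') \int_{B(0,1)} f_-,
\]
so dividing by $1-\rho'$ and taking the supremum over $x \in K$ produces the claim with $C_1(K) = c_2/(1-\rho')$ and $C_2(K) = c_1$, both strictly positive. The only subtle step is Step 3, where subharmonicity of $f_-$ is essential: without it, $\int_A f_-$ could be much smaller than $\int_B f_-$ and the negative term on the right-hand side would be too weak.
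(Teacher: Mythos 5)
Your proof is correct, and it follows the same overall skeleton as the paper's: represent $f(x)$ for $x \in K$ by integrating the Poisson kernel over an annulus $A = B(0,1)\setminus B(0,\rho')$, use two-sided bounds $c_1 \le P \le c_2$ there to get
\[
f(x) \le c_2 \int_A f_+ - c_1 \int_A f_-,
\]
and then upgrade $\int_A f_-$ to $\int_{B(0,1)} f_-$. The genuine difference is in that last upgrade. You argue via subharmonicity: $f_- = (-f)_+$ is subharmonic, so its spherical means are nondecreasing in $r$, hence the surface integrals $\phi(r)=\int_{\partial B(0,r)} f_-\,d\sigma$ are nondecreasing and the annulus must carry at least the fraction $(1-\rho')$ of $\int_{B(0,1)} f_-$. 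The paper instead compares $-\int_A f_-(y)G(y,z)\,dy$ with $-\int_A f_-(y)G(y,0)\,dy$ (using the two-sided kernel bounds), bounds $-f_- \le f$ to replace the integrand by $f$, recognizes the result as $f(0)$, and then invokes the mean value property $f(0) = \mvint_{B(0,1)} f \le \mvint_{B(0,1)} f_+ - \mvint_{B(0,1)} f_-$; this costs an extra harmless $\int_{B(0,1)} f_+$ term on the right. Both arguments are elementary and yield the same form of constants; yours isolates the monotonicity-of-means fact cleanly (and would generalize to any nonnegative subharmonic weight in place of $f_-$), while the paper's avoids invoking subharmonicity at all and runs everything through the mean value at the center. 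No gaps in your argument: the uniformity of $c_1, c_2$ in $r \in [\rho',1)$, the coarea identity in Step 2, and the monotonicity of $\phi(r) = |S^{n-1}|\,r^{n-1} h(r)$ as a product of nonnegative nondecreasing functions are all justified as written.
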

\begin{proof}
By representation of harmonic functions, recall $\lap f = 0$ in $B$, for any choice of $\delta > 0$ such that $K \subset \subset B_{1-\delta}$, we have 
\[
f (z) = \int_{B \backslash B_{1-\delta}} f(y)\, G(y,z)\, dy \quad \forall z \in K.
\]
Here $G$ can be computed explicitly, but the main point is that there exists $C = C(\delta,K) > 0$ so that
\[
\frac{1}{C(\delta,K)} \leq G(y,z) \leq C(\delta,K) \quad \mbox{for all $z \in K$, $y \in B \backslash B_{1-\delta}$.}
\]
In particular, for any fixed $z \in K$, 
\begin{equation}\label{eq:lambdahKest1}
\begin{split}
f(z) =& \int_{B \backslash B_{1-\delta}} f(y)\, G(y,z)\, dy\\
\aleq& -\int_{B \backslash B_{1-\delta}} (f)_-(y)\, G(y,z)\, dy  +  \int_{B} (f)_+.
\end{split}
\end{equation}
It remains to estimate $(f)_-$-term, 
\[
\begin{split}
&-\int_{B \backslash B_{1-\delta}} (f)_-(y)\, G(y,z)\, dy\\
\aleq & -\int_{B \backslash B_{1-\delta}} (f)_-(y)\, G(y,0)\, dy  \\
\leq & \int_{B \backslash B_{1-\delta}} f(y)\, G(y,0)\, dy \\
= & f(0)\\
\end{split}
\]
Now by the representation $f(0) = \mvint_{B(0,1)} f$, we continue
\[
\begin{split}
&-\int_{B \backslash B_{1-\delta}} (f)_-(y)\, G(y,z)\, dy\\
\aleq & \int_{B(0,1)} f\\
\leq & -\int_{B(0,1)} (f)_- + \int_{B} (f)_+\\
\end{split}
\]
Plugging this into \eqref{eq:lambdahKest1} we arrive at
\begin{equation}\label{eq:lambdahKest2}
\begin{split}
\sup_{z \in K} f(z) \leq& -C_2 \int_{B(0,1)} (f)_- + C_1 \int_{B} (f)_+
\end{split}
\end{equation}
\end{proof}

\subsection*{Acknowledgment}
A.S. was partially supported by the German Research Foundation (DFG) through grant no.~SCHI-1257-3-1, by the Daimler and Benz foundation through grant no. 32-11/16, as well as the Simons foundation through grant no 579261. A.S. was a Heisenberg Fellow.

A.S. likes to express his gratitude to E. Kuwert for many discussions on conformal parametrization and the results by M\"uller and Sverak.

\bibliographystyle{abbrv}%
\bibliography{bib}%

\begin{thebibliography}{10}

\bibitem{A88}
D.~R. Adams.
\newblock A sharp inequality of {J}. {M}oser for higher order derivatives.
\newblock {\em Ann. of Math. (2)}, 128(2):385--398, 1988.

\bibitem{BR14}
Y.~Bernard and T.~Rivi\`ere.
\newblock Energy quantization for {W}illmore surfaces and applications.
\newblock {\em Ann. of Math. (2)}, 180(1):87--136, 2014.

\bibitem{BRS16}
S.~Blatt, P.~Reiter, and A.~Schikorra.
\newblock Harmonic analysis meets critical knots. {C}ritical points of the
  {M}\"{o}bius energy are smooth.
\newblock {\em Trans. Amer. Math. Soc.}, 368(9):6391--6438, 2016.

\bibitem{BRS19}
S.~Blatt, P.~Reiter, and A.~Schikorra.
\newblock {On O'Hara Knot energies}.
\newblock {\em in prep.}, 2020.

\bibitem{BBM01}
J.~Bourgain, H.~Brezis, and P.~Mironescu.
\newblock Another look at {S}obolev spaces.
\newblock In {\em Optimal control and partial differential equations}, pages
  439--455. IOS, Amsterdam, 2001.

\bibitem{BrC84}
H.~Brezis and J.-M. Coron.
\newblock Multiple solutions of {$H$}-systems and {R}ellich's conjecture.
\newblock {\em Comm. Pure Appl. Math.}, 37(2):\ 149--187, 1984.

\bibitem{CLMS}
R.~Coifman, P.-L. Lions, Y.~Meyer, and S.~Semmes.
\newblock Compensated compactness and {H}ardy spaces.
\newblock {\em J.~Math.~Pures Appl.,~IX.~S\'er.}, 72(3):\ 247--286, 1993.

\bibitem{DLR11b}
F.~Da~Lio and T.~Rivi\`ere.
\newblock Sub-criticality of non-local {S}chr\"{o}dinger systems with
  antisymmetric potentials and applications to half-harmonic maps.
\newblock {\em Adv. Math.}, 227(3):1300--1348, 2011.

\bibitem{DLR11a}
F.~Da~Lio and T.~Rivi\`ere.
\newblock Three-term commutator estimates and the regularity of
  {$\frac12$}-harmonic maps into spheres.
\newblock {\em Anal. PDE}, 4(1):149--190, 2011.

\bibitem{G14}
L.~Grafakos.
\newblock {\em Modern {F}ourier analysis}, volume 250 of {\em Graduate Texts in
  Mathematics}.
\newblock Springer, New York, third edition, 2014.

\bibitem{Helein-2002}
F.~H\'elein.
\newblock {\em Harmonic maps, conservation laws and moving frames}, volume 150
  of {\em Cambridge Tracts in Mathematics}.
\newblock Cambridge University Press, Cambridge, second edition, 2002.
\newblock Translated from the 1996 French original, With a foreword by James
  Eells.

\bibitem{J77}
B.~Jawerth.
\newblock Some observations on {B}esov and {L}izorkin-{T}riebel spaces.
\newblock {\em Math. Scand.}, 40(1):94--104, 1977.

\bibitem{KolasinskiStrzleeckivdMGAFA}
S.~Kolasi\'nski, P.~Strzelecki, and H.~von~der Mosel.
\newblock Characterizing {$W^{2,p}$} submanifolds by {$p$}-integrability of
  global curvatures.
\newblock {\em Geom. Funct. Anal.}, 23(3):937--984, 2013.

\bibitem{KL12}
E.~Kuwert and Y.~Li.
\newblock {$W^{2,2}$}-conformal immersions of a closed {R}iemann surface into
  {$\Bbb R^n$}.
\newblock {\em Comm. Anal. Geom.}, 20(2):313--340, 2012.

\bibitem{KS12}
E.~Kuwert and R.~Sch\"{a}tzle.
\newblock The {W}illmore functional.
\newblock In {\em Topics in modern regularity theory}, volume~13 of {\em CRM
  Series}, pages 1--115. Ed. Norm., Pisa, 2012.

\bibitem{LR18}
P.~Laurain and T.~Rivi\`ere.
\newblock Energy quantization of {W}illmore surfaces at the boundary of the
  moduli space.
\newblock {\em Duke Math. J.}, 167(11):2073--2124, 2018.

\bibitem{LS18}
E.~Lenzmann and A.~Schikorra.
\newblock Sharp commutator estimates via harmonic extensions.
\newblock {\em Nonlinear Analysis}, 2018.

\bibitem{LiLuoTang13}
Y.~Li, Y.~Luo, and H.~Tang.
\newblock On the moving frame of a conformal map from 2-disk into
  {$\mathbb{R}^n$}.
\newblock {\em Calc. Var. Partial Differential Equations}, 46(1-2):31--37,
  2013.

\bibitem{M70}
J.~Moser.
\newblock A sharp form of an inequality by {N}. {T}rudinger.
\newblock {\em Indiana Univ. Math. J.}, 20:1077--1092, 1970/71.

\bibitem{Mueller90}
S.~M{\"u}ller.
\newblock Higher integrability of determinants and weak convergence in {$L\sp
  1$}.
\newblock {\em J.~Reine Angew.~Math.}, 412:\ 20--34, 1990.

\bibitem{MS95}
S.~M\"{u}ller and V.~\v{S}ver\'{a}k.
\newblock On surfaces of finite total curvature.
\newblock {\em J. Differential Geom.}, 42(2):229--258, 1995.

\bibitem{OS18}
J.~O'Hara and G.~Solanes.
\newblock Regularized {R}iesz energies of submanifolds.
\newblock {\em Math. Nachr.}, 291(8-9):1356--1373, 2018.

\bibitem{Reshetnyak-1968}
Y.~G. Reshetnyak.
\newblock Stability theorems for mappings with bounded excersions.
\newblock {\em Siberian Mathematical Journal}, 9(3):499--512, 1968.

\bibitem{R07}
T.~Rivi\`ere.
\newblock Conservation laws for conformally invariant variational problems.
\newblock {\em Invent. Math.}, 168(1):1--22, 2007.

\bibitem{R08}
T.~Rivi\`ere.
\newblock Analysis aspects of {W}illmore surfaces.
\newblock {\em Invent. Math.}, 174(1):1--45, 2008.

\bibitem{R15}
T.~Rivi\`ere.
\newblock The variations of yang-mills lagrangian.
\newblock {\em Preprint, arXiv:1506.04554}, 2015.

\bibitem{RS96}
T.~Runst and W.~Sickel.
\newblock {\em Sobolev spaces of fractional order, {N}emytskij operators, and
  nonlinear partial differential equations}, volume~3 of {\em De Gruyter Series
  in Nonlinear Analysis and Applications}.
\newblock Walter de Gruyter \& Co., Berlin, 1996.

\bibitem{S15}
A.~Schikorra.
\newblock Integro-differential harmonic maps into spheres.
\newblock {\em Comm. Partial Differential Equations}, 40(3):506--539, 2015.

\bibitem{S16}
A.~Schikorra.
\newblock Nonlinear commutators for the fractional {$p$}-{L}aplacian and
  applications.
\newblock {\em Math. Ann.}, 366(1-2):695--720, 2016.

\bibitem{S19}
A.~Schikorra.
\newblock Boundary equations and regularity theory for geometric variational
  systems with {N}eumann data.
\newblock {\em Arch. Ration. Mech. Anal.}, 229(2):709--788, 2018.

\bibitem{StrzeleckivdMAdvMath2011}
P.~Strzelecki and H.~von~der Mosel.
\newblock Integral {M}enger curvature for surfaces.
\newblock {\em Adv. Math.}, 226(3):2233--2304, 2011.

\bibitem{StrzeleckivdM-JGA2013}
P.~Strzelecki and H.~von~der Mosel.
\newblock Tangent-point repulsive potentials for a class of non-smooth
  {$m$}-dimensional sets in {$\mathbb{R}^n$}. {P}art {I}: {S}moothing and
  self-avoidance effects.
\newblock {\em J. Geom. Anal.}, 23(3):1085--1139, 2013.

\bibitem{Tartar84}
L.~Tartar.
\newblock {Remarks on Oscillations and Stokes' Equation}.
\newblock {\em Lecture Notes in Physics, 230, macroscopic Modelling of
  Turbulent Flows, Proceedings, Sophia-Antipolis, France}, pages 24--31, 1984.

\bibitem{T97}
P.~Topping.
\newblock The optimal constant in {W}ente's {$L^\infty$} estimate.
\newblock {\em Comment. Math. Helv.}, 72(2):316--328, 1997.

\bibitem{T94}
T.~Toro.
\newblock Surfaces with generalized second fundamental form in {$L^2$} are
  {L}ipschitz manifolds.
\newblock {\em J. Differential Geom.}, 39(1):65--101, 1994.

\bibitem{T95}
T.~Toro.
\newblock Geometric conditions and existence of bi-{L}ipschitz
  parameterizations.
\newblock {\em Duke Math. J.}, 77(1):193--227, 1995.

\bibitem{T67}
N.~S. Trudinger.
\newblock On imbeddings into {O}rlicz spaces and some applications.
\newblock {\em J. Math. Mech.}, 17:473--483, 1967.

\bibitem{Wente69}
H.~C. Wente.
\newblock An existence theorem for surfaces of constant mean curvature.
\newblock {\em J.~Math.~Anal.~Appl.}, 26:\ 318--344, 1969.

\end{thebibliography}

\end{document}